\title{On behavior of pairs of Teichm\"{u}ller geodesic rays}
\author{Masanori Amano}
\address{Department of Mathematics Tokyo Institute of Technology 2-12-1 Ookayama, Meguroku,
Tokyo 152-8551, JAPAN}
\email{amano.m.ab@m.titech.ac.jp}
\subjclass[2000]{Primary~32G15, Secondary~30F60}
\keywords{Teichm\"{u}ller space; Teichm\"{u}ller distance; Teichm\"{u}ller geodesic; Augmented Teichm\"{u}ller space; Gardiner-Masur compactification; Horofunction}
\theoremstyle{plain}
\newtheorem{thm}{Theorem}[section]
\theoremstyle{definition}
\newtheorem{defi}[thm]{Definition}
\theoremstyle{plain}
\newtheorem{prop}[thm]{Proposition}
\theoremstyle{plain}
\newtheorem{lemma}[thm]{Lemma}
\theoremstyle{plain}
\newtheorem{cor}[thm]{Corollary}
\theoremstyle{remark}
\theoremstyle{remark}
\newtheorem*{rem}{\bf Remark}
\theoremstyle{definition}
\DeclareMathOperator{\ext}{Ext}
\DeclareMathOperator{\jac}{Jac}
\begin{document}

\begin{abstract}
In this paper, we obtain the explicit limit value of the Teichm\"{u}ller distance between two Teichm\"{u}ller geodesic rays which are determined by Jenkins-Strebel differentials having a common end point on the augmented Teichm\"{u}ller space.
Furthermore, we also obtain a condition under which these two rays are asymptotic.
This is similar to a result of Farb and Masur.
\end{abstract}

\maketitle

\section{Introduction}
Let $T(X)$ be the Teichm\"{u}ller space of an analytic finite Riemann surface $X$.
Each Teichm\"{u}ller geodesic ray on $T(X)$ is determined by a holomorphic quadratic differential on an initial point.
We are interested in the behavior of two geodesic rays near the boundary of the Teichm\"{u}ller space.

An interesting question is when the Teichm\"{u}ller distance between the given two Teichm\"{u}ller geodesic rays are bounded, or diverge.
This question is answered completely by Ivanov \cite{Ivanov01}, Lenzhen and Masur \cite{LenMas10} and Masur \cite{Masur75}, \cite{Masur80}.
The details are following.
First, Masur showed that if the measured foliations of the given rays are Jenkins-Strebel and topologically equivalent, then the rays are bounded (\cite{Masur75}).
Masur also showed that if the measured foliations are uniquely ergodic and topologically equivalent with the condition that the sets of their critical trajectories do not contain closed loops, then the rays are asymptotic (and so bounded) (\cite{Masur80}).
Ivanov showed that if the measured foliations are absolutely continuous, then the rays are bounded, and if the measured foliations have non-zero intersection number, then the rays are divergent (\cite{Ivanov01}).
Finally, Lenzhen and Masur showed that if the measured foliations are not absolutely continuous, or the measured foliations are not topologically equivalent and have zero intersection number, then the rays are divergent (\cite{LenMas10}).

On the other hand, Farb and Masur \cite{FarMas10} considered two Jenkins-Strebel rays in the moduli space given by modularly equivalent holomorphic quadratic differentials.
They showed that under some conditions of initial points of two rays, the limit value of the distance between the rays in the moduli space is the distance between the end points of the rays in the boundary of the moduli space.
If in addition, the end points of the given rays are coincide, then they are asymptotic.

In this paper, we consider two Teichm\"{u}ller geodesic rays $r$, $r'$ on the Teichm\"{u}ller space $T(X)$ starting at $[Y,f]$, $[Y',f']$ with the conditions that their holomorphic quadratic differentials $q$, $q'$ are Jenkins-Strebel and the measured foliations $f^{-1}_*(H(q))$, $f'^{-1}_*(H(q'))$ are topologically equivalent.
Furthermore, we assume that the end points of $r$, $r'$ on the augmented Teichm\"{u}ller space are coincide.
Under the above assumption, we determine the limit value of the Teichm\"{u}ller distance between two points $r(t)$, $r'(t)$ on the given rays $r$, $r'$ respectively (Theorem \ref{main}).
Furthermore, we show that if two rays are modularly equivalent, then they are asymptotic (Corollary \ref{cor}).
This is similar to a theorem of Farb and Masur \cite{FarMas10}.
In Theorem \ref{main}, the limit value depends on the modulus of each annulus which is determined by the holomorphic quadratic differentials on the initial points of given rays.
Therefore, this value depends on the choice of the initial points.
We consider the minimum of the limit value of the Teichm\"{u}ller distance between two rays when we shift the initial points of given rays.
We show that the minimum is represented by the detour metric between the end points on the Gardiner-Masur boundary of $T(X)$ (Proposition \ref{prop}).

\section{Background}

\subsection{Teichm\"{u}ller spaces}
Let $X$ be a Riemann surface of type $(g,n)$ with $3g-3+n>0$.
The {\it Teichm\"{u}ller space} $T(X)$ of $X$ is the set of all equivalence classes $[Y,f]$ of pairs of a Riemann surface $Y$ and a quasiconformal mapping $f:X\rightarrow Y$.
Two pairs $(Y_1,f_1)$ and $(Y_2,f_2)$ are equivalent if there is a conformal mapping $h:Y_1\rightarrow Y_2$ such that $h\circ f_1$ is homotopic to $f_2$.
The Teichm\"{u}ller space $T(X)$ has a complete distance, called the {\it Teichm\"{u}ller distance} $d_{T(X)}$.
For any $p_1=[Y_1,f_1]$, $p_2=[Y_2,f_2]\in T(X)$, the distance is defined by
	\begin{center}
	$\displaystyle d_{T(X)}(p_1,p_2)=\frac{1}{2}\inf _h\log K(h)$,
	\end{center}
where $h$ ranges over all quasiconformal mappings $h:Y_1\rightarrow Y_2$ such that $h\circ f_1$ is homotopic to $f_2$, and $K(h)$ means the maximal quasiconformal dilatation of $h$.

\subsection{Holomorphic quadratic differentials}
In this section, we refer to \cite{Strebel84} in detail.

A {\it holomorphic quadratic differential} $q$ on $X$ is a tensor which is represented locally by $q=q(z)dz^2$ where $q(z)$ is a holomorphic function of the local coordinate $z=x+iy$ on $X$.
We allow holomorphic quadratic differentials to have simple poles at the punctures of $X$.
For each holomorphic quadratic differential $q=q(z)dz^2$, $|q|$ is defined locally by the differential 2-form $|q(z)|dxdy$ on each coordinate $z$.
The norm of $q$ is defined by $\|q\|=\iint _X|q(z)|dxdy$.
We treat holomorphic quadratic differentials whose norm are finite.
A holomorphic quadratic differential $q$ is called of {\it unit norm} if it satisfies $\|q\|=1$.
A {\it critical point} of $q$ is a zero of $q$ or a puncture of $X$.
Each non-zero holomorphic quadratic differential has finitely many critical points.
In a neighborhood of non-critical points, there exists a local coordinate $\zeta $ on $X$ such that $q=d\zeta ^2$.
Indeed, let $p_0$ be a non-critical point, then $q^{\frac{1}{2}}=q(z)^{\frac{1}{2}}dz$ has a single valued holomorphic branch in some neighborhood $U$ around $p_0$.
The new coordinate
	\begin{center}
	$\displaystyle \zeta (p)=\int ^p_{p_0}q^\frac{1}{2}$
	\end{center}
is defined for any $p\in U$, and this coordinate satisfies $q=d\zeta ^2$.
We call the coordinate $\zeta $ a {\it $q$-coordinate} on $X$.
For any two $q$-coordinates $\zeta _1$, $\zeta _2$ in a common neighborhood $U$, the equation $\zeta _2=\pm \zeta _1+c$ where $c\in \mathbb C$ holds, because $q=d\zeta _1^2=d\zeta _2^2$.
A smooth path $z=\gamma (t)$ on $X$ is called a {\it horizontal trajectory} of $q$ if it is a maximal path which satisfies $q(\gamma (t))\dot \gamma (t)^2>0$.
We notice that a horizontal trajectory of $q$ consists of non-critical points of $q$.
A horizontal trajectory of $q$ is represented by a Euclidean horizontal segment in $q$-coordinates.
All horizontal trajectories of $q$ are classified by the following three types.
A horizontal trajectory $\gamma $ of $q$ is {\it critical} if it joins critical points of $q$, {\it closed} if it is a closed path, {\it recurrent} otherwise.
The recurrent trajectory is dense on a subsurface of $X$ which is surrounded by critical trajectories.
Let $\Gamma _q$ be the set of all critical points and critical trajectories of $q$.
A component of $X-\Gamma _q$ is an {\it annulus} which is swept out by closed trajectories of $q$ such that they are homotopic to each other, or a {\it minimal domain} which consists of infinitely many recurrent trajectories of $q$.
If all components of $X-\Gamma _q$ are annuli, we call $q$ a {\it Jenkins-Strebel differential}.

\subsection{Measured foliations}
A {\it foliation} $F=\{(U_j, z_j)\}_j$ {\it with singularities} on $X$ is given by the pair of an open covering $\{ U_j\}_j$ of $X-\{ $finite distinguished points$\}$ and each local coordinate $z_j$ on $U_j$ such that the equation $z_k=\pm z_j+c$ where $c\in \mathbb C$ holds if $U_j\cap U_k\not=\emptyset$.
These distinguished points and punctures of $X$ are called {\it singularities} of $F$.
They are $p$-pronged singularities for $p\geq 1$, but the case of $p=1,2$ is attained only at the punctures of $X$.
A maximal horizontal segment with respect to $\{ z_j\}_j$ is called a {\it leaf} of $F$.
A {\it transverse measure} $\mu $ of $F$ is given by a measure on the set of all transversal arcs of leaves of $F$ such that if transversal arcs $\alpha $ and $\beta $ are moved to each other by the homotopy and each orbit of which is contained in a single leaf of $F$, then $\mu (\alpha )=\mu (\beta )$.
The pair $(F,\mu )$ is called a {\it measured foliation} on $X$.
Let $\mathcal S$ be the set of all homotopy classes of non-trivial and non-peripheral simple closed curves on $X$.
For any measured foliation $(F,\mu )$ and any $\alpha \in \mathcal S$, we can define the {\it intersection number}
	\begin{center}
	$\displaystyle i((F,\mu ),\alpha )=\inf _{\alpha ' \in \alpha }\int _{\alpha '}d\mu $,
	\end{center}
where $\alpha '$ ranges over all simple closed curves in $\alpha $.
Two pairs $(F_1, \mu _1)$ and $(F_2, \mu _2)$ are equivalent if the equation
	\begin{center}
	$i((F_1, \mu _1), \alpha )=i((F_2, \mu _2), \alpha )$
	\end{center}
holds for any $\alpha \in \mathcal S$.
We denote by $\mathcal {MF}(X)$ the set of all equivalence classes of measured foliations on $X$.
The set $\mathcal {MF}(X)$ has the week-topology which is induced by intersection number functions in $\mathbb R_{\geq 0}^{\mathcal S}$.
We denote by $[F,\mu ]$ the equivalence class of $(F,\mu )$.
We consider the space of measured foliations $\mathcal {MF}(Y)$ on any other Riemann surface $Y$ of the same type as $X$.
For any homeomorphism $f:X\rightarrow Y$, there exists a homeomorphism $f_*:\mathcal {MF}(X)\rightarrow \mathcal {MF}(Y)$ which is defined by $f_*([F,\mu ])=[f(F), \mu \circ f^{-1}]\in \mathcal {MF}(Y)$ for any $[F,\mu ]\in \mathcal {MF}(X)$.
After this, we denote by $\mu $ as the equivalence class of a measured foliation $[F,\mu ]\in \mathcal {MF}(X)$.
We can see that $\mathbb R _{\geq 0}\times \mathcal S\subset \mathcal {MF}(X)$.
For any $b\geq 0$ and any $\gamma \in \mathcal S$, the measured foliation $b\gamma $ consists of closed leaves which are homotopic to $\gamma $ and the leaves tend to singular points, and these closed leaves produce an annulus of height $b$.
This measured foliation satisfies $i(b\gamma, \alpha)=bi(\gamma, \alpha)$ for any $\alpha \in \mathcal S$.
The right side of this equation is a geometric intersection number of simple closed curves.
Thurston showed that $\overline {\mathbb R_{\geq 0}\times \mathcal S}=\mathcal {MF}(X)$. (For instance, we refer to \cite{FatLauPoe79}.)
Then, we can see that intersection number functions are defined on $\mathcal {MF}(X)\times \mathcal {MF}(X)$. (We refer to \cite{Rees81} for details.)
We notice that for any homeomorphism $f:X\rightarrow Y$ and any $\mu ,\nu \in \mathcal {MF}(X)$, the equation $i(f_*(\mu ),f_*(\nu ))=i(\mu ,\nu )$ holds.

\begin{rem}
The set $\mathcal {MF}(X)$ contains the zero-measured foliation, denoted by $0$, that is, it has the zero intersection number with all measured foliations.
Of course, for any $\mu \in \mathcal {MF}(X)$, we regard the measured foliation $0\mu $ as $0$.
\end{rem}

\subsection{Measured foliations and holomorphic quadratic differentials}
For any non-zero holomorphic quadratic differential $q$ on $X$, we can define the measured foliation $H(q)\in \mathcal {MF}(X)$ consists of all horizontal trajectories of $q$ as leaves and $|dy|$ as a transverse measure where $z=x+iy$ is the $q$-coordinate.
The singularities of $H(q)$ are the critical points of $q$.

\begin{rem}
There is a one-to-one correspondence between the set of all holomorphic quadratic differentials of finite norm on $X$ and $\mathcal {MF}(X)$, we refer to \cite{HubMas79}.
\end{rem}

If $q$ has an annulus $A$ in $X-\Gamma _q$, then the restriction to $A$ of the measured foliation $H(q)$ can be written as $H(q)|_A=b\gamma $ where $\gamma \in \mathcal S$ corresponds to closed trajectories which sweep out in $A$, and $b>0$ is the height of $A$ with respect to the metric $|dy|$.
If $q$ has a minimal domain $M$ in $X-\Gamma _q$, then the restriction to $M$ of the measured foliation $H(q)$ can be written as $H(q)|_M=\sum ^{p}_{i=1}b_ie_i$ where $p>0$ is bounded by the number which is determined by the topology of $M$, $b_i\geq 0$ for any $i=1,\cdots ,p$ and $\{e_i \}_{i=1}^p$ is the set of ergodic transverse measures which are projectively-distinct and pairwise having zero intersection number (that is, for any $i\not =i'$ and any $k\geq 0$, $e_{i'}\not =ke_i$ and $i(e_i,e_{i'})=0$).
A transverse measure $e$ is called an {\it ergodic transverse measure} if it is non-zero and cannot be written as a sum of projectively-distinct and non-zero measured foliations.
(For ergodicity, we refer to \cite{KatHas95} for more informations.)
The holomorphic quadratic differential $q$ has finitely many critical points, so the measured foliation $H(q)$ has finitely many these domains.
Therefore, the measured foliation $H(q)$ can be written as
	\begin{center}
	$\displaystyle H(q)=\sum _{j=1}^kb_jG_j$,
	\end{center}
where $G_j$ is (a homotopy class of) a simple closed curve or an ergodic measure for any $j=1,\cdots ,k$ such that these are projectively-distinct and pairwise having zero intersection number, and $b_j>0$ if $G_j\in \mathcal S$, $b_j\geq 0$ if $G_j$ is an ergodic measure. (For this notation, we refer the reader to \cite{Ivanov92} for more details.)
In particular, if $q$ is a Jenkins-Strebel differential, then we can write
	\begin{center}
	$\displaystyle H(q)=\sum _{j=1}^kb_j\gamma _j$,
	\end{center}
where $b_1,\cdots ,b_k$ are positive real numbers and $\gamma _1,\cdots ,\gamma _k$ are distinct simple closed curves such that $i(\gamma _j,\gamma _{j'})=0$ for any $j\not =j'$, and in this situation, we also call that $H(q)$ is {\it Jenkins-Strebel}.
If $X-\Gamma _q$ has only one minimal domain and the measured foliation $H(q)$ is represented by $be$ where $b>0$ and $e$ is an ergodic measure, and any other topologically equivalent measured foliation $\mu $ on $X$ is represented by $\mu =b'e$ where $b'>0$, then $q$ and $H(q)$ are called {\it uniquely ergodic}.
We come back to for the general case and see that
	\begin{center}
	$\displaystyle i\left(\sum _{j=1}^kb_jG_j,\alpha \right)=\sum _{j=1}^kb_ji(G_j,\alpha )$
	\end{center}
for any $\alpha \in \mathcal S$.
For any $j=1,\cdots ,k$, we set
	\begin{center}
	$\displaystyle m_j=\frac{b_j}{i(G_j,V(q))}$,
	\end{center}
where the measured foliation $V(q)$ is defined by $H(-q)$.
If $G_j$ is a simple closed curve $\gamma _j\in \mathcal S$, then $a_j:=i(\gamma _j,V(q))$ means the infimum of the horizontal lengths of the simple closed curves in $\gamma _j$ with respect to the metric $|dx|$, and $m_j=\frac{b_j}{a_j}$ means a modulus of the annulus which is generated by $\gamma _j$, that is, the ratio of the height and the circumference of the annulus.

Let $q$, $q'$ be unit norm holomorphic quadratic differentials on $X$ and $H(q)$, $H(q')$ be measured foliations in $\mathcal {MF}(X)$ constructed by $q$, $q'$ respectively.

\begin{defi}
The pair of holomorphic quadratic differentials $q$, $q'$ or measured foliations $H(q)$, $H(q')$ is called {\it topologically equivalent} if there is a homeomorphism $\alpha :X-\Gamma _q\rightarrow X-\Gamma _{q'}$ which is homotopic to the identity such that the leaves of $H(q)$ are mapped to the leaves of $H(q')$.
In this situation, we can write their measured foliations as $H(q)=\sum _{j=1}^kb_jG_j$, $H(q')=\sum _{j=1}^kb'_jG_j$ where $G_j$ is a simple closed curve or an ergodic measure for any $j=1,\cdots ,k$ such that these are projectively-distinct and pairwise having zero intersection number, and $b_j$, $b'_j>0$ if $G_j\in \mathcal S$, $b_j$, $b'_j\geq 0$ if $G_j$ is an ergodic measure.
The pair of holomorphic quadratic differentials $q$, $q'$ or measured foliations $H(q)$, $H(q')$ is called {\it absolutely continuous} if $q$, $q'$ are topologically equivalent and for $H(q)=\sum _{j=1}^kb_jG_j$, $H(q')=\sum _{j=1}^kb'_jG_j$, the set of subscripts of non-zero coefficients $b_j$ of $H(q)$ and the one of $H(q')$ are coincide.
In this situation, we can regard as $b_j$, $b'_j>0$ for any $j=1,\cdots ,k$.
\end{defi}

\begin{rem}
If two holomorphic quadratic differentials $q$, $q'$ are Jenkins-Strebel and topologically equivalent, then they are absolutely continuous.
\end{rem}

\subsection{Teichm\"{u}ller geodesic rays}
Let $q$ be a unit norm holomorphic quadratic differential on $X$.
A quasiconformal mapping $f$ on $X$ is called a {\it Teichm\"{u}ller mapping} for $q$ if $f$ has the Beltrami coefficient $\frac{K(f)-1}{K(f)+1}\frac{\bar q}{|q|}$.
The existence and uniqueness for Teichm\"{u}ller mappings are the followings.

\begin{thm}{\rm (Teichm\"{u}ller's existence theorem)}
For any quasiconformal mapping $g:X\rightarrow Y$, there is a Teichm\"{u}ller mapping $f$ which is homotopic to $g$.
\end{thm}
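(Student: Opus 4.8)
The plan is to solve the extremal problem in the homotopy class of $g$ and then identify the extremal map analytically. First I would set
\[
K_0=\inf\{K(f): f\simeq g,\ f:X\to Y \text{ quasiconformal}\},
\]
which is finite since $g$ itself is a competitor, and which satisfies $K_0\ge 1$. Choosing a minimizing sequence $f_n$ with $K(f_n)\to K_0$, I would appeal to the compactness principle for families of uniformly quasiconformal mappings: after a suitable normalization a subsequence converges locally uniformly to a quasiconformal map $f_0$ with $K(f_0)\le K_0$. The first point requiring care is that the limit still lies in the homotopy class of $g$; on an analytically finite surface this holds because locally uniform convergence of homeomorphisms preserves the induced action on $\pi_1$ (equivalently, on $\mathcal S$), so $f_0\simeq g$ and hence $K(f_0)=K_0$, i.e.\ $f_0$ is \emph{extremal}.

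The heart of the matter is to show that an extremal map on an analytically finite surface is a Teichm\"uller mapping, that is, that its Beltrami coefficient $\mu_0$ has the form $k\,\bar q/|q|$ for a holomorphic quadratic differential $q$ of finite norm and some $0\le k<1$. Here I would invoke two classical analytic ingredients. The Hamilton--Krushkal necessary condition says that extremality of $\mu_0$ forces
\[
\|\mu_0\|_\infty=\sup\left\{\left|\iint_X \mu_0\,\varphi\right| : \varphi \text{ a holomorphic quadratic differential},\ \|\varphi\|=1\right\}.
\]
Because $X$ is of finite type, the space of integrable holomorphic quadratic differentials is finite-dimensional, so its unit sphere is compact and the supremum is attained by some $q$. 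The Reich--Strebel ``main inequality'' then upgrades this: a convergent Hamilton sequence (which exists by compactness) pins down $\mu_0$ pointwise, yielding $\mu_0=k\,\bar q/|q|$ almost everywhere. This is exactly the assertion that $f_0$ is a Teichm\"uller mapping for $q$, which completes the argument.

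The main obstacle is precisely this second step: passing from ``$f_0$ minimizes dilatation'' to ``$\mu_0$ has Teichm\"uller form.'' Producing \emph{some} extremal map is soft (normal families), but the special form is genuinely where finite-dimensionality enters, since in an infinite-dimensional Teichm\"uller space extremal maps need not be of Teichm\"uller type; controlling the Hamilton sequence through the Reich--Strebel inequality is the real technical crux. As a secondary matter I would track the degenerate case $K_0=1$, where $f_0$ is conformal and the statement is vacuous, and verify that the attained differential $q$ is genuinely nonzero whenever $K_0>1$ so that the Beltrami form $k\,\bar q/|q|$ is well defined.
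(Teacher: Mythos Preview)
The paper does not actually prove this statement: it records Teichm\"uller's existence theorem (and the companion uniqueness theorem) as background and defers to the reference \cite{ImaTan92} with the remark ``We refer to \cite{ImaTan92} for details.'' So there is no proof in the paper to compare your proposal against.

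That said, your outline is a correct sketch of one standard route to the existence theorem. One small streamlining: once you know the Hamilton--Krushkal supremum is \emph{attained} at some unit-norm $q$ (by finite-dimensionality), the equality case in $\left|\iint_X \mu_0\,q\right|\le \|\mu_0\|_\infty\|q\|$ already forces $\mu_0 q=\|\mu_0\|_\infty\,|q|$ almost everywhere, hence $\mu_0=k\,\bar q/|q|$ directly; invoking the Reich--Strebel main inequality is not strictly necessary at that point (it is more essential for uniqueness and for the sufficiency of the Hamilton condition). Your handling of the degenerate case $K_0=1$ and of the nonvanishing of $q$ when $K_0>1$ is appropriate.
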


\begin{thm}{\rm (Teichm\"{u}ller's uniqueness theorem)}
For any quasiconformal mapping $g:X\rightarrow Y$ which is homotopic to the Teichm\"{u}ller mapping $f$, the inequality $K(f)\leq K(g)$ holds and the equality holds if and only if $f=g$.
\end{thm}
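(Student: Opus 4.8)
The plan is to prove the inequality by the classical length--area (Gr\"{o}tzsch) argument applied to the horizontal trajectory structure of the differential $q$ that defines $f$, and then to read off the equality case by tracking when each estimate is sharp. First I would pass to the natural $q$-coordinate $\zeta=\xi+i\eta$, in which $q=d\zeta^2$, so that the Beltrami coefficient $\frac{K-1}{K+1}\frac{\bar q}{|q|}$ of $f$ (write $K:=K(f)$) becomes the real constant $k=\frac{K-1}{K+1}$. After the standard area-normalization $f$ is then affine, $\zeta\mapsto K^{1/2}\xi+iK^{-1/2}\eta$, carrying $q$ to a terminal holomorphic quadratic differential $q_1=dw^2$ on $Y$ with $\|q_1\|=\|q\|=1$, where $w=u+iv$ is the natural $q_1$-coordinate.

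Expressing the competitor $g$ in these coordinates, I would fix a horizontal trajectory of $q$ at height $\eta$ running over an interval of $\xi$-length $\ell$. Since $g$ is homotopic to $f$, the $g$-image of this trajectory is homotopic (rel endpoints, or freely for a closed trajectory) to its $f$-image, which is a horizontal $q_1$-geodesic of $q_1$-length $K^{1/2}\ell$; hence the $q_1$-length $\int|g_\xi|\,d\xi$ of the $g$-image is at least $K^{1/2}\ell$. Squaring and applying $\bigl(\int|g_\xi|\,d\xi\bigr)^2\le \ell\int|g_\xi|^2\,d\xi$ gives $K\ell\le\int|g_\xi|^2\,d\xi$, and integrating over $\eta$ yields $K\|q\|=K\iint_X d\xi\,d\eta\le\iint_X|g_\xi|^2\,d\xi\,d\eta$. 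To close the argument I would estimate the right-hand side pointwise: since $\partial_\xi=\partial_\zeta+\partial_{\bar\zeta}$, one has $|g_\xi|^2\le(|g_\zeta|+|g_{\bar\zeta}|)^2=\frac{1+|\mu_g|}{1-|\mu_g|}\,J_g\le K(g)\,J_g$, where $\mu_g=g_{\bar\zeta}/g_\zeta$ and $J_g=|g_\zeta|^2-|g_{\bar\zeta}|^2$ is the Jacobian. Integrating, $\iint_X|g_\xi|^2\,d\xi\,d\eta\le K(g)\iint_X J_g\,d\xi\,d\eta=K(g)\,\|q_1\|=K(g)$, which combines with the previous bound to give $K(f)\le K(g)$.

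For the equality statement I would trace back the three inequalities. Equality in Cauchy--Schwarz forces $|g_\xi|$ to be constant along almost every horizontal trajectory; equality in the length estimate forces the $g$-image of each trajectory to be the horizontal $q_1$-geodesic; and equality in $\frac{1+|\mu_g|}{1-|\mu_g|}\le K(g)$ forces $|\mu_g|\equiv k$ together with the alignment $\mu_g=k\,\bar q/|q|$ almost everywhere. Hence $g$ is a Teichm\"{u}ller mapping with exactly the Beltrami coefficient of $f$; since $g$ is homotopic to $f$, the uniqueness of the normalized solution of the Beltrami equation (equivalently, that $g\circ f^{-1}$ is conformal and homotopic to the identity, hence the identity) gives $g=f$.

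The main obstacle, and the step I would treat most carefully, is the analytic bookkeeping around the critical graph $\Gamma_q$ and the punctures, where the trajectory coordinates degenerate: although $\Gamma_q$ has zero area, I must ensure that $g$ is absolutely continuous on almost every horizontal trajectory (the ACL property of quasiconformal maps) so that the length integrals and the integration over $\eta$ are legitimate, and I must justify the passage from the homotopy hypothesis to the uniform geodesic-length lower bound across trajectories approaching singular leaves. The equality discussion is likewise delicate, since the pointwise conditions hold only almost everywhere and must be promoted to the genuine identity $g=f$ using continuity together with the homotopy normalization.
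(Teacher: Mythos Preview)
The paper does not give its own proof of this statement: Teichm\"{u}ller's uniqueness theorem is quoted as classical background, with the sentence ``These facts are called the Teichm\"{u}ller's theorem. (We refer to \cite{ImaTan92} for details.)'' immediately afterward. So there is no in-paper argument to compare against.

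Your sketch is the standard Gr\"{o}tzsch length--area proof (as in Ahlfors, Bers, or the reference \cite{ImaTan92} the paper cites), and the overall strategy is correct: pass to natural coordinates, bound the $q_1$-length of $g$-images of horizontal arcs from below by the $q_1$-length of the corresponding $f$-images, apply Cauchy--Schwarz, and close with the pointwise dilatation bound $|g_\xi|^2\le K(g)J_g$. The equality analysis is also the classical one. The only place I would tighten is the sentence ``hence the $q_1$-length $\int|g_\xi|\,d\xi$ of the $g$-image is at least $K^{1/2}\ell$'': for non-closed horizontal arcs the statement that horizontal $q_1$-segments minimize $q_1$-length in their homotopy class rel endpoints needs the endpoints to lie on $\Gamma_{q_1}$ and the homotopy to be through maps agreeing with $f$ on $\Gamma_q$; the cleanest way to make this rigorous is to argue with the real part $u$ of $g$ and use $\int|g_\xi|\,d\xi\ge\int|u_\xi|\,d\xi\ge\bigl|\int u_\xi\,d\xi\bigr|$, where the last integral is determined by the homotopy. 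You already flag this and the ACL/critical-graph issues in your final paragraph, so you are aware of exactly where the care is needed.
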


These facts are called the Teichm\"{u}ller's theorem. (We refer to \cite{ImaTan92} for details.)
Therefore, a Teichm\"{u}ller mapping is attained the supremum of the definition of the Teichm\"{u}ller distance, i.e., for any $p_1=[Y_1,f_1]$, $p_2=[Y_2,f_2]$, there is the Teichm\"{u}ller mapping $h:Y_1\rightarrow Y_2$ which is homotopic to $f_2\circ f_1^{-1}$ such that $d_{T(X)}(p_1,p_2)=\frac{1}{2}\log K(h)$.

For any $p=[Y, f]\in T(X)$, let $q$ be a unit norm holomorphic quadratic differential on $Y$ and $z=x+iy$ be a $q$-coordinate.
For any $t\geq 0$, let $Y_t$ be a Riemann surface determined by the local coordinate $z_t=e^{-t}x+ie^ty$ and $g_t:Y\rightarrow Y_t$ be the Teichm\"{u}ller mapping which is determined by $z\mapsto z_t$.
We assume that $Y_0=Y$, $g_0=id_Y$.
We set $r(t)=[Y_t, g_t\circ f]$.
By the Teichm\"{u}ller's theorem, this mapping $r:[0,\infty )\rightarrow T(X)$ is the {\it Teichm\"{u}ller geodesic ray} on $T(X)$ starting at $p$ and having the holomorphic quadratic differential $q$.
If the holomorphic quadratic differential $q$ is of Jenkins-Strebel, the ray $r$ is called a {\it Jenkins-Strebel ray}.

\begin{defi}
Let $p=[Y,f]$, $p'=[Y',f']\in T(X)$, $q$, $q'$ be unit norm holomorphic quadratic differentials on $Y$, $Y'$ and $r$, $r'$ be Teichm\"{u}ller geodesic rays on $T(X)$ starting at $p$, $p'$ and having $q$, $q'$ respectively.
We denote by $H(q)\in \mathcal {MF}(Y)$, $H(q')\in \mathcal {MF}(Y')$ the measured foliations corresponding to $q$, $q'$ respectively.
We suppose that $f^{-1}_*(H(q))$, $f'^{-1}_*(H(q'))\in \mathcal {MF}(X)$ are absolutely continuous, then the measured foliations are written as $f^{-1}_*(H(q))=\sum _{j=1}^kb_jG_j$, $f'^{-1}_*(H(q'))=\sum _{j=1}^kb'_jG_j$, $H(q)=\sum _{j=1}^kb_jf_*(G_j)$ and $H(q')=\sum _{j=1}^kb'_jf'_*(G_j)$ where $b_1,\cdots ,b_k$, $b'_1,\cdots ,b'_k$ are positive real numbers and $G_j$ is a simple closed curve or an ergodic measure for any $j=1,\cdots ,k$ such that these are projectively-distinct and pairwise having zero intersection number.
We set $m_j=\frac{b_j}{i(f_*(G_j),V(q))}$, $m'_j=\frac{b'_j}{i(f'_*(G_j),V(q'))}$ for any $j=1,\cdots ,k$.
In this situation, the given rays $r$, $r'$ are called {\it modularly equivalent} if there is $\lambda >0$ such that $m'_j=\lambda m_j$ for any $j=1,\cdots ,k$.
\end{defi}

\begin{defi}
The pair of Teichm\"{u}ller geodesic rays $r$, $r'$ on $T(X)$ are called {\it bounded} if there is $M>0$ such that $d_{T(X)}(r(t),r'(t))<M$ for any $t\geq 0$, {\it divergent} if $d_{T(X)}(r(t),r'(t))\rightarrow +\infty $ as $t\rightarrow \infty $, and {\it asymptotic} if there is a choice of initial points $r(0)$, $r'(0)$ such that $d_{T(X)}(r(t),r'(t))\rightarrow 0$ as $t\rightarrow \infty $, in other words, for the given rays $r(t)$, $r'(t)$, there is $\alpha \in \mathbb R$ such that $d_{T(X)}(r(t),r'(t+\alpha ))\rightarrow 0$ as $t\rightarrow \infty $.
\end{defi}

\subsection{The end point of a Jenkins-Strebel ray} \label{end}
In this section, we refer to \cite{Abikoff77}, \cite{HerSch07} and \cite{ImaTan92}.

\begin{defi}(Riemann surfaces with nodes)
A connected Hausdorff space $R$ is called a {\it Riemann surface of type $(g,n)$ with nodes} if $R$ satisfies the following three conditions:
	\begin{enumerate}
	\item Any $p\in R$ has a neighborhood which is homeomorphic to the unit disk $\mathbb D$ or the set $\{ (z_1,z_2)\in \mathbb C^2\ |\ |z_1|<1,|z_2|<1,z_1\cdot z_2=0\}$.
	(In the latter case, $p$ is called a {\it node} of $R$. We allow $R$ to have finitely many nodes.)
	\item Let $p_1,\cdots,p_k$ be nodes of $R$.
	We denote by $R_1,\cdots,R_r$ the connected components of $R-\{ p_1,\cdots,p_k\}$.
	For $i=1,\cdots,r$, each $R_i$ is of type $(g_i,n_i)$ which satisfies $2g_i-2+n_i>0$, $n=\sum _{i=1}^rn_i-2k$, and $g=\sum _{i=1}^rg_i-r+k+1$.
	\end{enumerate}
\end{defi}

\begin{rem}
The condition (2) means that we get a Riemann surface of type $(g,n)$ without nodes by opening each node of $R$.
All Riemann surfaces of type $(g,n)$ without nodes are included to this definition.
\end{rem}

\begin{defi}(Augmented Teichm\"{u}ller spaces)
Let $X$ be a Riemann surface of type $(g,n)$ without nodes which satisfies $3g-3+n>0$.
The {\it augmented Teichm\"{u}ller space} $\hat T(X)$ of $X$ is the set of all equivalence classes $[R,f]$ of pairs of a Riemann surface $R$ of type $(g,n)$ with nodes and a deformation $f:X\rightarrow R$ which is a mapping such that it contracts some disjoint loops on $X$ to points (the nodes of $R$) and is a homeomorphism except their loops on $X$.
Two pairs $(R_1,f_1)$ and $(R_2,f_2)$ are equivalent if there is a biholomorphic mapping $h:R_1\rightarrow R_2$ such that $f_2$ is homotopic to $h\circ f_1$.
Here, for Riemann surfaces with nodes $R$ and $S$, a homeomorphism $f:R\rightarrow S$ is called biholomorphic if each restricted mapping of $f$ which maps a component of $R-\{$nodes of $R\}$ onto a component of $S-\{$nodes of $S\}$ is biholomorphic.
A topology on $\hat T(X)$ is defined by the following neighborhoods.
For any compact neighborhood $V$ of the set of nodes in $R$ and any $\varepsilon >0$, a neighborhood $U_{V,\varepsilon }$ of a point $[R,f]$ is defined by
	\begin{center}
	$U_{V,\varepsilon }=\{ [S,g]\in \hat T(X)$\ $|$\ there is a deformation $h:S\rightarrow R$ which is $(1+\varepsilon )$-quasiconformal on $h^{-1}(R-V)$ such that $f$ is homotopic to $h\circ g\}$.
	\end{center}
\end{defi}

Let $p=[Y, f]\in T(X)$ and $q$ be a unit norm Jenkins-Strebel differential on $Y$.
We denote by $r(t)=[Y_t,g_t\circ f]$ for any $t\geq 0$ the Jenkins-Strebel ray on $T(X)$ starting at $p$ and having $q$.
We consider the end point of $r$ in $\hat T(X)$.
We denote by $H(q)=\sum _{j=1}^kb_j\gamma _j$ the measured foliation constructed by $q$.
The surface $Y-\Gamma _q$ consists of annuli corresponding to $\gamma _1,\cdots ,\gamma_k$.
By the $q$-coordinate $z$, their annuli are represented by rectangles such that each rectangle identifies its vertical sides.
Then, we denote by
	\begin{center}
	$\displaystyle R_j(0)=\{ z\in \mathbb C\ |\ 0\leq$Re$z\leq a_j$, $0<$Im$z<b_j\}/(i$Im$z\sim a_j+i$Im$z)$
	\end{center}
the annulus corresponding to $\gamma _j$ where $a_j=i(\gamma _j,V(q))$ and $\sim $ means the identification of vertical sides of the rectangle for any $j=1,\cdots ,k$.
Let $m_j=\frac{b_j}{a_j}$ be a modulus of $R_j(0)$ for any $j=1,\cdots ,k$.
Two horizontal sides of each $\overline {R_j(0)}$ are glued other ones by transformations of form $z\mapsto \pm z+c$ where $c\in \mathbb C$.
The critical points of $q$ are on the horizontal sides of each $\overline {R_j(0)}$.
The Riemann surface $Y$ is obtained by $\overline {R_1(0)},\cdots ,\overline {R_k(0)}$ with these gluing mappings.
Now, we view $R_1(0),\cdots ,R_k(0)$ as round annuli.
First, for any $j=1,\cdots ,k$, we cut $R_j(0)$ at the half height:
	\begin{center}
	$\displaystyle R_j^1(0)=\{ z\in \mathbb C\ |\ 0\leq $Re$z\leq a_j$, $0<$Im$z\leq \frac{b_j}{2}\}/(i$Im$z\sim a_j+i$Im$z)$,
	\end{center}
	\begin{center}
	$\displaystyle R_j^2(0)=\{ z\in \mathbb C\ |\ 0\leq$Re$z\leq a_j$, $\frac{b_j}{2}\leq$Im$z<b_j\}/(i$Im$z\sim a_j+i$Im$z)$.
	\end{center}
The corresponding round annuli are defined by the following:
	\begin{center}
	$\displaystyle A_j^1(0)=A_j^2(0)=\{ w\in \mathbb C\ |\ \exp(-m_j\pi )\leq |w|<1\}$.
	\end{center}
The half surface $R_j^1(0)$ is mapped to $A_j^1(0)$ by the mapping $z\mapsto w=\exp(2\pi i\frac{z}{a_j})$, $R_j^2(0)$ is mapped to $A_j^2(0)$ by the mapping $z\mapsto w=\exp(2\pi i\frac{a_j+ib_j-z}{a_j})$.
We glue $A_j^1(0)$ and $A_j^2(0)$ at the line of these inner boundary $|w|=\exp(-m_j\pi )$ by the mapping $w\mapsto \frac{\exp(-2m_j\pi )}{w}$ and we denote by $A_j(0)$ the resulting surface.
There is a natural biholomorphic mapping of $A_j(0)$ onto $R_j(0)=R_j^1(0)\cup R_j^2(0)$.
This mapping is extended continuously to the mapping $\overline{A_j(0)}$ onto $\overline{R_j(0)}$.
Then, we obtain the base surface $Y$ after gluing $\overline{A_1(0)},\cdots ,\overline{A_k(0)}$ by the gluing mappings essentially the same as in the case of $\overline{R_1(0)},\cdots ,\overline{R_k(0)}$.
We recall that the Teichm\"{u}ller mapping $g_t:Y\rightarrow Y_t$ is represented by $z\mapsto z_t=e^{-t}x+ie^ty$ for any $t\geq 0$.
We fix any $t\geq 0$.
We denote by $R_1(t),\cdots,R_k(t)$ the surfaces which are transformed from $R_1(0),\cdots ,R_k(0)$ by the mapping $g_t$.
For any $j=1,\cdots ,k$, these are expressed by
	\begin{center}
	$\displaystyle R_j(t)=\{ z_t\in \mathbb C\ |\ 0\leq$Re$z_t\leq e^{-t}a_j$, $0<$Im$z_t<e^tb_j\}/(i$Im$z_t\sim e^{-t}a_j+i$Im$z_t)$.
	\end{center}
Similarly, we define
	\begin{center}
	$\displaystyle R_j^1(t)=\{ z_t\in \mathbb C\ |\ 0\leq $Re$z_t\leq e^{-t}a_j$, $0<$Im$z_t\leq \frac{e^tb_j}{2}\}/(i$Im$z_t\sim e^{-t}a_j+i$Im$z_t)$,
	\end{center}
	\begin{center}
	$\displaystyle R_j^2(t)=\{ z_t\in \mathbb C\ |\ 0\leq$Re$z_t\leq e^{-t}a_j$, $\frac{e^tb_j}{2}\leq$Im$z_t<e^tb_j\}/(i$Im$z_t\sim e^{-t}a_j+i$Im$z_t)$,
	\end{center}
	\begin{center}
	$\displaystyle A_j^1(t)=A_j^2(t)=\{ w_t\in \mathbb C\ |\ \exp(-e^{2t}m_j\pi )\leq |w_t|<1\}$.
	\end{center}
The mapping of $R_j^1(t)$ onto $A_j^1(t)$ is obtained by $z_t\mapsto w_t=\exp(2\pi i\frac{e^tz}{a_j})$, $R_j^2(t)$ onto $A_j^2(t)$ is obtained by $z_t\mapsto w_t=\exp(2\pi i\frac{a_j+ie^{2t}b_j-e^tz}{a_j})$.
The gluing mapping of $A_j^1(t)$ onto $A_j^2(t)$ is $w_t\mapsto \frac{\exp(-2e^{2t}m_j\pi )}{w_t}$ in their inner boundary $|w_t|=\exp(-e^{2t}m_j\pi )$, and we denote by $A_j(t)$ the resulting surface.
There is a biholomorphic mapping of $A_j(t)$ onto $R_j(t)$ which can be extended continuously to their closures.
The surface $Y_t$ is obtained by $\overline {R_1(t)},\cdots ,\overline {R_k(t)}$ with the gluing mappings the same as in the case of $t=0$, that is, we use the coordinate $z_t$ instead of $z$.
The surface $Y_t$ is also obtained by $\overline{A_1(t)},\cdots ,\overline{A_k(t)}$ similarly.
The Teichm\"{u}ller mapping $g_t$ is considered as the mapping of $A_j^l(0)$ onto $A_j^l(t)$ by $w=re^{i\theta }\mapsto w_t=r^{\exp(2t)}e^{i\theta }$ for any $j=1,\cdots ,k$ and $l=1,2$.
Therefore, the diagram of Figure \ref{J-Sray} is commutative.

	\begin{figure}[!ht]
	\begin{center}
	\includegraphics[keepaspectratio, scale=0.70]
	{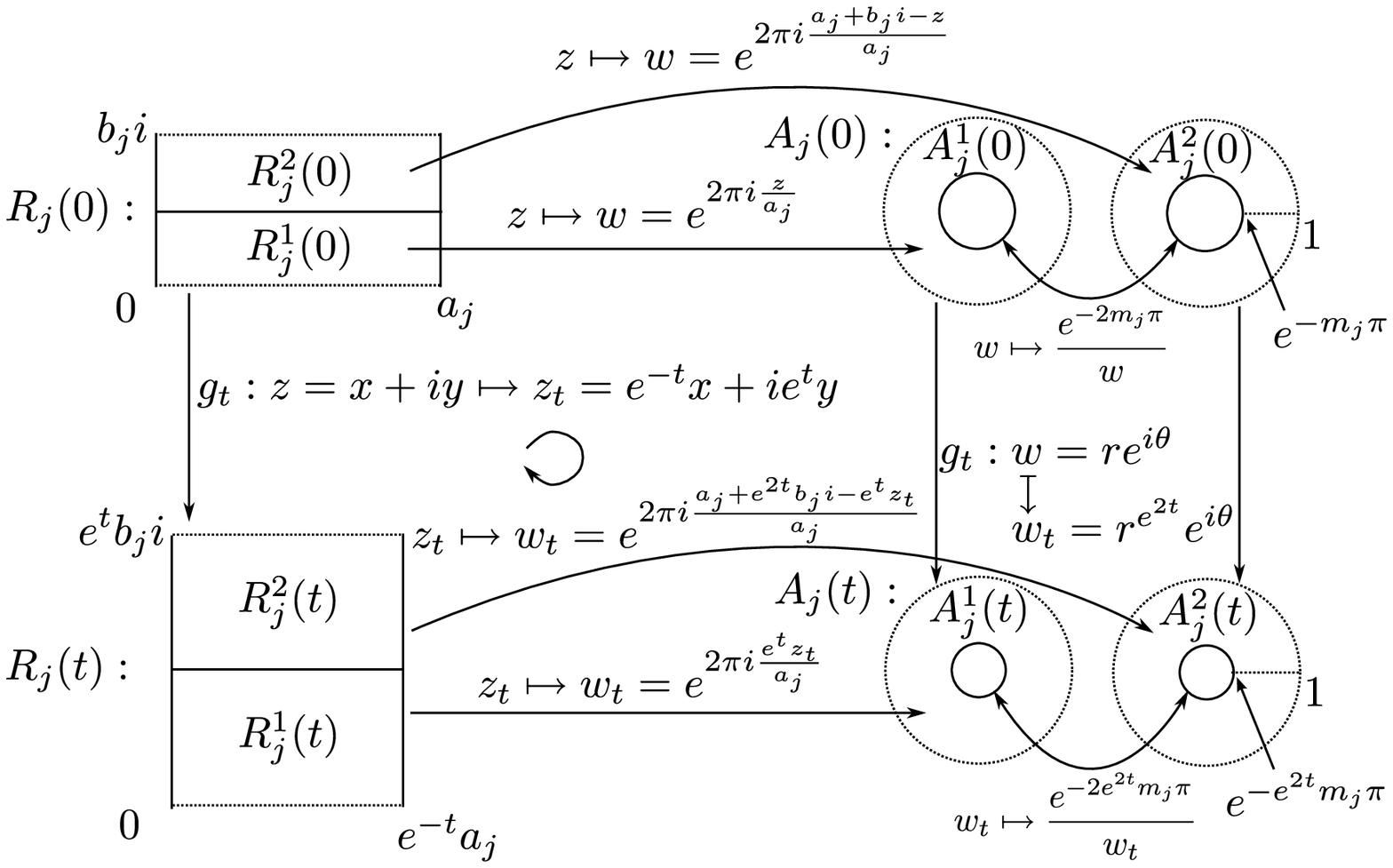}
	\caption{}
	\label{J-Sray}
	\end{center}
	\end{figure}

\begin{rem}
We notice that the moduli of $R_j(t)$ and $A_j(t)$ are equal to $e^{2t}m_j$ for any $t\geq 0$ and any $j=1,\cdots ,k$.
\end{rem}

We consider the limit of $A_j(t)$ as $t\rightarrow \infty$ for any $j=1,\cdots ,k$.
We set $A_j^1(\infty )=A_j^2(\infty )=\{ w_{\infty }\in \mathbb C\ |\ 0<|w_{\infty }|<1\}$ and $\{ pt\}$ is the set consists of an arbitrary point. The disjoint union $A_j(\infty )= A_j^1(\infty )\cup A_j^2(\infty )\cup \{ pt\}$ becomes a complex cone by the following chart:
	\begin{center}
	$\displaystyle \Phi :A_j(\infty )\rightarrow \{ (w_{\infty }^1,w_{\infty }^2)\in \mathbb C^2\ |\ |w_{\infty }^1|<1,|w_{\infty }^2|<1, w_{\infty }^1\cdot w_{\infty }^2=0\}$,
	\end{center}
	\begin{center}
	$\displaystyle \Phi |_{A_j^1(\infty )}(w_{\infty })=(w_{\infty },0)$, $\Phi |_{A_j^2(\infty )}(w_{\infty })=(0,w_{\infty })$, $\Phi (pt)=(0,0)$
	\end{center}
for any $j=1,\cdots ,k$.
We denote by $Y_{\infty }$ the surface constructed by $\overline{A_1(\infty )}, \cdots $, $\overline{A_k(\infty )}$ with the gluing mappings the same as in the case of $\overline{A_1(0)},\cdots ,\overline{A_k(0)}$ which are used the coordinate $w_{\infty }$ instead of $w$.
The mapping $g_{\infty }:Y\rightarrow Y_{\infty }$ is constructed by the mapping of $A_j^l(0)$ onto $A_j^l(\infty )\cup \{ pt\}$ by $w=re^{i\theta }\mapsto w_{\infty }=h_j(r)e^{i\theta }$ where $h_j:[\exp(-m_j\pi ),1)\rightarrow [0,1)$ is an arbitrary monotonously increasing diffeomorphism for any $j=1,\cdots ,k$ and $l=1,2$.
The homotopy class of $g_{\infty }$ is independent of the choices of $h_j$ for any $j=1,\cdots ,k$.
Then, we obtain $[Y_{\infty },g_{\infty }\circ f]$ in $\hat T(X)$ and denote it by $r(\infty )$.
From the definition of a neighborhood of $[Y_{\infty },g_{\infty }\circ f]$, the following proposition holds.

\begin{prop}{\rm (\cite{HerSch07})} \label{convergence}
The Jenkins-Strebel ray $r(t)=[Y_t,g_t\circ f]$ on $T(X)$ starting at $p=[Y,f]$ and having the unit norm Jenkins-Strebel differential $q$ on $Y$ converges to a point $r(\infty )=[Y_{\infty },g_{\infty }\circ f]$ in $\hat T(X)$.
\end{prop}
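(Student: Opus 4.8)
The plan is to verify the convergence directly from the definition of the topology on $\hat T(X)$: I must show that for every compact neighborhood $V$ of the nodes of $Y_{\infty}$ and every $\varepsilon>0$, one has $r(t)\in U_{V,\varepsilon}(r(\infty))$ for all sufficiently large $t$. Since $V'\supseteq V$ gives $U_{V',\varepsilon}\supseteq U_{V,\varepsilon}$, and every compact neighborhood of the nodes contains one of the sets $V_c$ corresponding to $\{|w_{\infty}|\le c\}$ in each chart $A_j^l(\infty)$ (together with the nodes), it suffices to treat the smaller sets $V=V_c$, $c\in(0,1)$, which are compact and cofinal downward among neighborhoods of the nodes. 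Thus the task reduces to producing, for large $t$, a deformation $h_t\colon Y_t\to Y_{\infty}$ which is $(1+\varepsilon)$-quasiconformal on $h_t^{-1}(Y_{\infty}-V_c)$ and satisfies $h_t\circ g_t\circ f\simeq g_{\infty}\circ f$, equivalently $h_t\circ g_t\simeq g_{\infty}$ after cancelling the common homeomorphism $f$.

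Before constructing $h_t$ I would note that the obvious candidate $g_{\infty}\circ g_t^{-1}$ is \emph{not} suitable: on $A_j^l$ it is the radial map $w_t=\rho e^{i\theta}\mapsto h_j(\rho^{e^{-2t}})e^{i\theta}$, whose dilatation in the flat coordinate $\log w$ equals $\max(U',1/U')$ with $U'=\frac{rh_j'(r)}{h_j(r)}e^{-2t}$; the factor $e^{-2t}$ forces $U'\to0$ and hence the dilatation to blow up, even on the region mapping to $Y_{\infty}-V_c$. Since the homotopy class of $g_{\infty}$ is independent of the diffeomorphisms $h_j$, I am free to replace $g_{\infty}\circ g_t^{-1}$ by any representative of its class, and I would build $h_t$ by hand on each closed annulus $\overline{A_j(t)}$. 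Choose $t$ so large that $\exp(-e^{2t}m_j\pi)<c$ for every $j$. On the outer sub-annulus $\{c\le|w_t|<1\}$ of each half $A_j^l(t)$ set $h_t$ equal to the identity $w_t\mapsto w_{\infty}=w_t$ into $A_j^l(\infty)$; on the inner part $\{\exp(-e^{2t}m_j\pi)\le|w_t|\le c\}$ set $h_t(\rho e^{i\theta})=\phi(\rho)e^{i\theta}$ for a homeomorphism $\phi\colon[\exp(-e^{2t}m_j\pi),c]\to[0,c]$ with $\phi(c)=c$ and $\phi(\exp(-e^{2t}m_j\pi))=0$, so that the glued inner boundary, i.e.\ the core of $A_j(t)$, is collapsed to the node. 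This $h_t$ is continuous across $|w_t|=c$, is a homeomorphism off the core curves, and contracts exactly those curves to the nodes, so it is a deformation; it descends to a well-defined map $Y_t\to Y_{\infty}$ because the gluing formulas for $Y_t$ and $Y_{\infty}$ are identical in the coordinates $w_t$ and $w_{\infty}$, and $h_t$ is the identity in coordinates near the outer boundary.

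The quasiconformal estimate is then immediate: by construction $h_t^{-1}(Y_{\infty}-V_c)=\{c<|w_t|<1\}$ in each half, and there $h_t$ is the identity in the holomorphic coordinates, hence conformal, i.e.\ $1$-quasiconformal and a fortiori $(1+\varepsilon)$-quasiconformal for every $\varepsilon>0$. What remains, and what I expect to be the only real work, is the homotopy statement $h_t\circ g_t\simeq g_{\infty}$. Here I would observe that on each $A_j^l(0)$ both maps are radial, $re^{i\theta}\mapsto\Psi(r)e^{i\theta}$ and $re^{i\theta}\mapsto h_j(r)e^{i\theta}$, with $\Psi,h_j$ increasing homeomorphisms onto $[0,1)$ that send the core $r=\exp(-m_j\pi)$ to $0$ (the node) and $r\to1^-$ to $|w_{\infty}|\to1^-$. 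Since increasing homeomorphisms with prescribed boundary behavior form a convex set, the straight-line homotopy $(1-s)\Psi+s\,h_j$ of radial profiles gives a homotopy through deformations collapsing the same cores to the same nodes. The main obstacle is checking that this profile-homotopy is compatible with the boundary identifications along the critical graph, so that it assembles into a genuine homotopy of maps on $Y$; once that compatibility is verified, $h_t\circ g_t$ and $g_{\infty}$ lie in the same homotopy class, $r(t)\in U_{V_c,\varepsilon}(r(\infty))$ for all large $t$ and all $\varepsilon>0$, and the convergence $r(t)\to r(\infty)$ in $\hat T(X)$ follows.
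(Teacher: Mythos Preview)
The paper does not give its own proof of this proposition; it is quoted from \cite{HerSch07} with the one-line remark that it follows ``from the definition of a neighborhood of $[Y_\infty,g_\infty\circ f]$''. Your argument is a correct direct verification of precisely this kind, and the deformation you build (identity in the $w$-coordinates on the outer collar $\{c\le|w_t|<1\}$, radial collapse of the inner collar to the node) is exactly the construction the paper writes out later, in its proof of Corollary~\ref{cor}, when it needs the analogous statement for $r'(t)$.

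The obstacle you flag at the end is not serious and is easily dispatched. Both radial profiles $\Psi$ and $h_j$ extend continuously to the value $1$ at $r=1$, so for every $s\in[0,1]$ the convex combination $(1-s)\Psi+s\,h_j$ does as well; hence every map in the homotopy restricts to the identity (in $w$-coordinates) on the outer boundary circles $|w|=1$. Since the gluing formulas along the critical graph are, by the paper's construction of $Y_\infty$, the \emph{same} maps in the coordinates $w$ and $w_\infty$, a map that is the identity on those boundary circles is automatically compatible with the gluings, and the piecewise-defined homotopy assembles to a continuous family of maps $Y\to Y_\infty$. At the inner boundary $r=\exp(-m_j\pi)$ both profiles vanish, so each map in the family collapses the core curve to the node and continuity across the core is immediate. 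This closes the argument.
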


\begin{rem}
We can reconstruct the surface $Y_t$ from $Y_{\infty }$ for any $t\geq 0$.
First, for any $j=1,\cdots ,k$ and $l=1,2$, we remove the punctured disk $\{ w_{\infty }\in \mathbb C\ |\ 0<|w_{\infty }|<\exp(-e^{2t}m_j\pi )\}$ from $A_j^l(\infty )$ of $Y_{\infty }$.
Then, the interior of each resulting annulus is biholomorphic to the interior of $A_j^l(t)$.
We obtain $Y_t$ by gluing $\{ \overline{A_j^l(t)}\} _{j=1,\cdots ,k}^{l=1,2}$ with the suitable gluing mappings.
\end{rem}

\subsection{Extremal lengths}
Let $\rho $ be a locally $L^1$-measurable conformal metric on a Riemann surface $Y$, i.e., it is represented by the form $\rho =\rho (z)|dz|$ on any local coordinate $z$ of $Y$ where $\rho (z)$ is a non-negative measurable function of $z$ such that the equation $\rho (z)|dz|=\rho (w)|dw|$ holds for any local coordinates $z,w$ on a common neighborhood $U$ of $Y$.
For any $\gamma \in \mathcal S$, we define the $\rho $-length of $\gamma $ on $Y$ and the $\rho $-area of $Y$ by
	\begin{center}
	$\displaystyle l_\rho (\gamma )=\inf _{\gamma '\in \gamma }\int _{\gamma '}\rho (z)|dz|$,\\
	$\displaystyle A_{\rho }=\iint_Y\rho (z)^2dxdy$
	\end{center}
respectively.
The {\it extremal length} $\ext _Y(\gamma )$ of $\gamma $ on $Y$ is defined by the following:
	\begin{center}
	$\displaystyle \ext _Y(\gamma )=\sup _{\rho }\frac{l_\rho (\gamma )^2}{A_{\rho }}$,
	\end{center}
where $\rho $ ranges over all locally $L^1$-measurable conformal metrics on $Y$ such that $0<A_{\rho }<\infty $.

For any $p=[Y,f]\in T(X)$ and any $\alpha \in \mathcal S$ on $X$, we define
	\begin{center}
	$\ext _p(\alpha )=\ext _Y(f_*(\alpha ))$.
	\end{center}

\begin{thm}{\rm (\cite{Kerckhoff80})}
For any Riemann surface $Y$, each extremal length on $Y$ extends continuously on $\mathcal {MF}(Y)$ such that the equation $\ext _Y(t\nu )=t^2\ext _Y(\nu )$ holds for any $t\geq 0$ and any $\nu \in \mathcal {MF}(Y)$.
\end{thm}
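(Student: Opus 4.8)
The plan is to realise the extension as the $L^1$-norm of the Hubbard--Masur quadratic differential, read off continuity and homogeneity from that description, and use the density of weighted simple closed curves to identify it as the \emph{unique} continuous extension of the classical extremal length. First I would fix the (finite-dimensional, of complex dimension $3g-3+n$) space $Q(Y)$ of holomorphic quadratic differentials of finite norm on $Y$, on which $q\mapsto\|q\|$ is a genuine norm and hence continuous. By the Hubbard--Masur correspondence quoted in the remark above, the map $Q(Y)\to\mathcal{MF}(Y)$, $q\mapsto H(q)$, is a homeomorphism; writing $\mu\mapsto q_\mu$ for its inverse, I set $\widetilde{\ext}_Y(\mu):=\|q_\mu\|$. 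This candidate is continuous on all of $\mathcal{MF}(Y)$ by construction, being the composite of the homeomorphism $\mu\mapsto q_\mu$ with the continuous norm $\|\cdot\|$.

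Next I would check that $\widetilde{\ext}_Y$ agrees with the classical extremal length on the dense subset $\mathbb{R}_{\geq 0}\times\mathcal S$. For $\gamma\in\mathcal S$ and $b>0$ the differential $q_{b\gamma}$ is Jenkins--Strebel with a single cylinder in the class $\gamma$; if that cylinder has horizontal circumference $a=i(\gamma,V(q_{b\gamma}))$ and height $b$, then $\|q_{b\gamma}\|=ab$, while the classical definition gives $\ext_Y(b\gamma)=b^2\ext_Y(\gamma)=b^2\cdot(a/b)=ab$, using that $l_\rho(b\gamma)=b\,l_\rho(\gamma)$ and that the flat metric $|q_{b\gamma}|^{1/2}$ is the extremal metric for $\gamma$. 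Since $\mathbb{R}_{\geq 0}\times\mathcal S$ is dense in $\mathcal{MF}(Y)$ by Thurston's theorem and a continuous function is determined by its values on a dense set, $\widetilde{\ext}_Y$ is \emph{the} continuous extension of $\ext_Y$, which completes the continuity assertion.

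Homogeneity then transfers from the scaling behaviour of quadratic differentials: since $H(t^2q)=t\,H(q)$ for $t\geq 0$ (scaling $q$ by $t^2$ scales the natural coordinate, and hence the transverse measure $|dy|$, by $t$), we have $q_{t\mu}=t^2q_\mu$, whence $\ext_Y(t\mu)=\|t^2q_\mu\|=t^2\|q_\mu\|=t^2\ext_Y(\mu)$.

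I expect the main obstacle to be the identification step on simple closed curves, namely that the classical extremal length of $\gamma$ equals the reciprocal modulus of its Jenkins--Strebel cylinder, equivalently that $|q_\gamma|^{1/2}$ is the extremal metric; this is the Jenkins--Strebel/Beurling length--area computation and rests on the existence and uniqueness of the Jenkins--Strebel differential in the class $\gamma$. As an alternative route to continuity, one could instead invoke Kerckhoff's identity $\ext_Y(\mu)=\sup_{\alpha\in\mathcal S} i(\mu,\alpha)^2/\ext_Y(\alpha)$, which exhibits $\ext_Y$ as a supremum of the functions $\mu\mapsto i(\mu,\alpha)^2/\ext_Y(\alpha)$ that are continuous in $\mu$ (yielding lower semicontinuity and homogeneity simultaneously), with the harder upper semicontinuity obtained by evaluating competing curves in the fixed extremal metric $|q_\mu|^{1/2}$.
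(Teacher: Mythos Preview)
The paper does not prove this statement: it is quoted as a result of Kerckhoff \cite{Kerckhoff80} and used as a black box, so there is no argument in the paper to compare your proposal against.

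Judged on its own merits, your approach via the Hubbard--Masur correspondence is the standard modern route and is essentially correct: the definition $\widetilde{\ext}_Y(\mu)=\|q_\mu\|$ is continuous because $\mu\mapsto q_\mu$ is a homeomorphism onto the finite-dimensional space $Q(Y)$, and homogeneity of degree two follows from $q_{t\mu}=t^2q_\mu$ exactly as you say. One presentational wrinkle: the classical extremal length is defined only for curve classes $\gamma\in\mathcal S$, not for weighted curves $b\gamma$, so there is no ``classical definition'' of $\ext_Y(b\gamma)$ to match, and your appeal to $l_\rho(b\gamma)=b\,l_\rho(\gamma)$ is circular. The clean order of business is: verify $\widetilde{\ext}_Y(\gamma)=\ext_Y(\gamma)$ on $\mathcal S$ itself (which is precisely the Jenkins--Strebel statement that $\ext_Y(\gamma)$ equals the reciprocal modulus $a/b$ of the extremal cylinder, hence $\|q_\gamma\|$ when the height is normalised to $1$), and then note that continuity and degree-two homogeneity of $\widetilde{\ext}_Y$ are already built in. Density of $\mathbb{R}_{\geq 0}\times\mathcal S$ is needed only for \emph{uniqueness} of the continuous extension, which the theorem as stated does not assert, though it is true and worth recording.
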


Hence, for any $p=[Y,f]\in T(X)$ and any $\mu \in \mathcal {MF}(X)$, we define $\ext _p(\mu )=\ext _Y(f_*(\mu ))$ and the equation $\ext _p(t\mu )=t^2\ext _p(\mu )$ holds for any $t\geq 0$.
We notice that for any $p\in T(X)$ and any $\mu \in \mathcal {MF}(X)-\{ 0\}$, the extremal length $\ext _p(\mu )$ is positive.
We set $\mathcal {MF}(X)^*=\mathcal {MF}(X)-\{ 0\}$.

\begin{thm}{\rm (Kerckhoff's formula for the Teichm\"{u}ller distance \cite{Kerckhoff80})}
For any $p_1,p_2\in T(X)$, the Teichm\"{u}ller distance between $p_1$ and $p_2$ is represented by
	\begin{center}
	$\displaystyle d_T(p_1,p_2)=\frac{1}{2}\log \sup _{\mu \in \mathcal {MF}(X)^*}\frac{\ext _{p_2}(\mu )}{\ext _{p_1}(\mu )}$.
	\end{center}
\end{thm}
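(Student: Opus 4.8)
The plan is to prove the asserted equality by rewriting it and then establishing two opposite inequalities. By the very definition $d_T(p_1,p_2)=\tfrac12\inf_h\log K(h)$, with $h$ ranging over quasiconformal maps $Y_1\to Y_2$ satisfying $h\circ f_1\simeq f_2$, and since $\log$ is monotone, the statement is equivalent to
\[
\sup_{\mu\in\mathcal{MF}(X)^*}\frac{\ext_{p_2}(\mu)}{\ext_{p_1}(\mu)}=\inf_h K(h).
\]
Because $\ext_p(t\mu)=t^2\ext_p(\mu)$, the ratio depends only on the projective class of $\mu$, so by the continuity of extremal length on $\mathcal{MF}(X)$ the supremum may be taken over projective classes of measured foliations, which form a compact space, and is therefore attained.

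First I would establish $\le$. Fix any admissible $h$ and any $\mu\in\mathcal{MF}(X)^*$, and set $\nu=f_{1*}(\mu)\in\mathcal{MF}(Y_1)$. Since $h\circ f_1\simeq f_2$ and intersection numbers, hence extremal lengths, are homotopy invariant, we have $f_{2*}(\mu)=h_*(\nu)$. I would then invoke the standard quasiconformal distortion inequality for extremal length, $\ext_{Y_2}(h_*\nu)\le K(h)\,\ext_{Y_1}(\nu)$, which comes from pushing forward an admissible conformal metric and comparing the resulting lengths and areas. This yields $\ext_{p_2}(\mu)\le K(h)\,\ext_{p_1}(\mu)$; taking the supremum over $\mu$ and then the infimum over $h$ gives $\sup_\mu\le\inf_h K(h)$.

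For the reverse inequality I would exhibit a single foliation realizing $\inf_h K(h)$. By Teichm\"{u}ller's existence and uniqueness theorems the infimum is attained by an affine Teichm\"{u}ller map $h_0:Y_1\to Y_2$ with $K:=K(h_0)=e^{2d_T(p_1,p_2)}$, associated to a unit-norm holomorphic quadratic differential $q$ on $Y_1$; let $q_2$ be its terminal differential on $Y_2$. Take $\mu_0=(f_1)^{-1}_*(H(q))\in\mathcal{MF}(X)^*$, so that $f_{1*}(\mu_0)=H(q)$ and $f_{2*}(\mu_0)=h_{0*}(H(q))$. Here I would use two facts: the identity $\ext_Y(H(q))=\|q\|$ for the horizontal foliation of a holomorphic quadratic differential, and the explicit affine form of $h_0$ in natural coordinates, which (after normalizing so that $h_0$ preserves area, whence $\|q_2\|=\|q\|=1$) carries horizontal leaves to horizontal leaves while multiplying the transverse measure $|dy|$ by $K^{1/2}$, i.e.\ $h_{0*}(H(q))=K^{1/2}H(q_2)$. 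Combined with the homogeneity $\ext(t\nu)=t^2\ext(\nu)$, this gives
\[
\frac{\ext_{p_2}(\mu_0)}{\ext_{p_1}(\mu_0)}=\frac{\ext_{Y_2}\!\left(K^{1/2}H(q_2)\right)}{\ext_{Y_1}(H(q))}=\frac{K\,\|q_2\|}{\|q\|}=K,
\]
so $\sup_\mu\ge K=\inf_h K(h)$, which closes the argument.

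The main obstacle is supplying the two analytic inputs on which the directions rest: the quasiconformal distortion inequality $\ext_{Y_2}(h_*\nu)\le K(h)\,\ext_{Y_1}(\nu)$ for the upper bound, and the identity $\ext_Y(H(q))=\|q\|$ realized by the flat metric $|q|^{1/2}\,|dz|$ (the Hubbard--Masur and Gardiner theory) for the lower bound. The rest is bookkeeping: tracking the transverse measure under the affine map, where one must take the horizontal foliation $H(q)$, whose extremal length is expanded by the full factor $K$, rather than the vertical foliation $H(-q)$, which would yield the reciprocal $K^{-1}$; and justifying both the reduction from $\mathcal{MF}(X)^*$ to (weighted) simple closed curves and the attainment of the supremum, via the density of $\mathbb{R}_{\ge0}\times\mathcal S$ together with the continuity and homogeneity of extremal length.
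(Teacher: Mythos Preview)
The paper does not give its own proof of this statement: Kerckhoff's formula is quoted from \cite{Kerckhoff80} as a background tool and is used, without proof, in the lower estimate of Theorem~\ref{main}. There is therefore nothing in the paper to compare your argument against.

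For what it is worth, your outline is the standard proof and is correct. The upper bound is exactly the quasi-invariance $\ext_{Y_2}(h_*\nu)\le K(h)\,\ext_{Y_1}(\nu)$, and for the lower bound you have chosen the right foliation and tracked the transverse measure correctly: in the area-preserving natural coordinates $z\mapsto K^{1/2}x+iK^{-1/2}y$ one has $h_{0*}(H(q))=K^{1/2}H(q_2)$ and $\|q_2\|=\|q\|$, so the ratio equals $K$. The two analytic inputs you flag, namely the distortion inequality for extremal length and the identity $\ext_Y(H(q))=\|q\|$ (the Hubbard--Masur/Gardiner characterization of the extremal metric as $|q|^{1/2}|dz|$), are precisely what Kerckhoff's original argument rests on. One small caution: the paper's geodesic ray is written as $z\mapsto e^{-t}x+ie^{t}y$, which contracts the horizontal direction, so if you were to align with the paper's ray convention rather than its stated Beltrami convention you would pick $V(q)$ instead of $H(q)$; either choice yields a foliation realizing the supremum, so this is purely cosmetic.
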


Kerckhoff's formula is useful for finding lower bounds of the Teichm\"{u}ller distance. 

\section{Proof of Theorem}
In this section, we prove Theorem \ref{main} which is our main theorem.

\begin{thm} \label{main}
Let $p=[Y,f]$, $p'=[Y',f']\in T(X)$, $q$, $q'$ be unit norm Jenkins-Strebel differentials on $Y$, $Y'$ and $r$, $r'$ be Jenkins-Strebel rays on $T(X)$ starting at $p$, $p'$ and having $q$, $q'$ respectively.
Let $r(\infty )$, $r'(\infty )$ be the end points of $r$, $r'$ on the augmented Teichm\"{u}ller space $\hat T(X)$ respectively.
We denote by $H(q)\in \mathcal {MF}(Y)$, $H(q')\in \mathcal {MF}(Y')$ the measured foliations corresponding to $q$, $q'$ respectively.
Suppose that the measured foliations $f^{-1}_*(H(q))$, $f'^{-1}_*(H(q'))\in \mathcal {MF}(X)$ are topologically equivalent.
In this situation, we can write $f^{-1}_*(H(q))=\sum _{j=1}^kb_j\gamma _j$, $f'^{-1}_*(H(q'))=\sum _{j=1}^kb'_j\gamma _j$, $H(q)=\sum _{j=1}^kb_jf_*(\gamma _j)$ and $H(q')=\sum _{j=1}^kb'_jf'_*(\gamma _j)$ where $b_1,\cdots ,b_k$, $b'_1,\cdots ,b'_k$ are positive real numbers and $\gamma _1,\cdots ,\gamma _k$ are distinct simple closed curves on $X$ such that $i(\gamma _j,\gamma _{j'})=0$ for any $j\not =j'$.
We denote by $a_j=i(f_*(\gamma _j),V(q))$, $a'_j=i(f'_*(\gamma _j),V(q'))$ the circumferences of the annuli of $H(q)$, $H(q')$ corresponding to $f_*(\gamma _j)$, $f'_*(\gamma _j)$, and $m_j=\frac{b_j}{a_j}$, $m'_j=\frac{b'_j}{a'_j}$ the corresponding moduli respectively, for any $j=1,\cdots ,k$.
If $r(\infty )=r'(\infty )$, then
	\begin{center}
	$\displaystyle \lim _{t\rightarrow \infty }d_{T(X)}(r(t),r'(t))=\frac{1}{2}\log \max _{j=1,\cdots ,k}\left\{ \frac{m'_j}{m_j},\frac{m_j}{m'_j}\right\}$.
	\end{center}
\end{thm}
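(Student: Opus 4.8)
The plan is to prove the claimed equality by establishing two matching inequalities: that $D:=\frac12\log\max_{j}\{m'_j/m_j,\,m_j/m'_j\}$ is a lower bound for $\liminf_{t\to\infty}d_{T(X)}(r(t),r'(t))$ and an upper bound for $\limsup_{t\to\infty}d_{T(X)}(r(t),r'(t))$. Throughout write $f_t=g_t\circ f$ and $f'_t=g'_t\circ f'$ for the markings of $Y_t$, $Y'_t$. The geometric picture driving both bounds is that along each ray the surface decomposes into the $k$ Jenkins--Strebel annuli $R_j(t)$ (resp.\ $R'_j(t)$) of modulus $e^{2t}m_j$ (resp.\ $e^{2t}m'_j$) glued along a fixed ``thick'' critical-graph region, and that the hypothesis $r(\infty)=r'(\infty)$ forces these thick parts to become conformally identical in the limit; thus only the mismatch of annulus moduli can contribute to the distance.

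For the lower bound I would apply Kerckhoff's formula and test the coordinate curves $\gamma_j\in\mathcal S\subset\mathcal{MF}(X)^*$. This gives
\[
e^{2d_{T(X)}(r(t),r'(t))}=\sup_{\mu\in\mathcal{MF}(X)^*}\frac{\ext_{r'(t)}(\mu)}{\ext_{r(t)}(\mu)}\ \ge\ \frac{\ext_{r'(t)}(\gamma_j)}{\ext_{r(t)}(\gamma_j)},
\]
and, by symmetry of $d_{T(X)}$, the same quantity also dominates $\ext_{r(t)}(\gamma_j)/\ext_{r'(t)}(\gamma_j)$. The key input is the asymptotics $\ext_{r(t)}(\gamma_j)=\ext_{Y_t}(f_{t*}(\gamma_j))\sim 1/(e^{2t}m_j)$ as $t\to\infty$ (and likewise with primes). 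The upper estimate $\ext_{Y_t}(f_{t*}(\gamma_j))\le 1/(e^{2t}m_j)$ is immediate by restricting the extremal problem to the subannulus $R_j(t)$, whose modulus is $e^{2t}m_j$; the matching lower estimate follows from the explicit reconstruction of $Y_t$ from $Y_\infty$, which shows that the maximal-modulus annulus homotopic to $\gamma_j$ exceeds $R_j(t)$ only by a modulus bounded independently of $t$. Hence $\ext_{r'(t)}(\gamma_j)/\ext_{r(t)}(\gamma_j)\to m_j/m'_j$, and taking the reciprocal and the maximum over $j$ yields $\liminf_{t\to\infty}e^{2d_{T(X)}(r(t),r'(t))}\ge \max_{j}\{m'_j/m_j,\,m_j/m'_j\}$.

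For the upper bound I would exhibit, for each large $t$, a quasiconformal map $h_t\colon Y_t\to Y'_t$ in the homotopy class of $f'_t\circ f_t^{-1}$ with $K(h_t)\to\max_{j}\{m'_j/m_j,\,m_j/m'_j\}$, so that $d_{T(X)}(r(t),r'(t))\le\frac12\log K(h_t)$. Because $r(\infty)=r'(\infty)$, there is a marking-preserving biholomorphism $\phi\colon Y_\infty\to Y'_\infty$ carrying the $j$-th node neighborhood of $Y_\infty$ onto that of $Y'_\infty$. Using the reconstruction remark, I would set $h_t=\phi$ (conformal, dilatation $1$) on the thick part obtained by deleting a small fixed punctured disk around each node, and on each collar insert a radial-stretch quasiconformal map of the form $w=\rho e^{i\theta}\mapsto \rho^{\beta_j}e^{i\theta}$ in the node coordinates, sending the collar of $Y_t$ of modulus $e^{2t}m_j+O(1)$ onto the corresponding collar of $Y'_t$ of modulus $e^{2t}m'_j+O(1)$, matching $\phi$ on the outer boundary and the inversion $w\mapsto\exp(-2e^{2t}m_j\pi)/w$ at the node. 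The exponent satisfies $\beta_j\to m'_j/m_j$, so each collar map has dilatation $\max\{\beta_j,\beta_j^{-1}\}\to\max\{m'_j/m_j,\,m_j/m'_j\}$ while the thick part contributes dilatation $1$; hence $K(h_t)\to\max_{j}\{m'_j/m_j,\,m_j/m'_j\}$, giving the matching upper bound and the theorem.

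The main obstacle is the honest construction and estimate of the collar maps: one must check that the radial interpolation can be made to agree on the outer circles with the (non-radial) boundary values of $\phi$ and with the node-gluing inversion on the inner circle, while keeping the extra dilatation caused by these boundary adjustments $o(1)$ as $t\to\infty$. This is exactly where $r(\infty)=r'(\infty)$ is indispensable: it guarantees that the thick parts differ by a genuine \emph{conformal} map rather than merely a quasiconformal one, so that the bounded modulus of the thick region contributes nothing in the limit and only the ratios $m'_j/m_j$ survive. Verifying that the resulting $h_t$ lies in the correct homotopy class $f'_t\circ f_t^{-1}$, needed to read $d_{T(X)}(r(t),r'(t))$ off from $K(h_t)$, is routine once $\phi$ and the collar maps are chosen marking-consistently.
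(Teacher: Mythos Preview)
Your overall strategy---a matching upper and lower bound, the upper bound coming from an explicit quasiconformal map built out of the biholomorphism $\phi\colon Y_\infty\to Y'_\infty$ plus radial collar stretches---is exactly the paper's. The ``main obstacle'' you flag (interpolating between the non-radial boundary values of $\phi$, which near the $j$-th node looks like $z\mapsto c_j^l z+O(z^2)$, and a pure radial stretch on the deep collar) is precisely what the paper resolves: it inserts an intermediate annulus on which a map $Q_{j,t}^l(z)=c_j^l z+\phi_{\Delta_j(t)}(|z|)\psi_j^l(z)$ interpolates between $h_j^l$ and $c_j^l z$ with dilatation $\to 1$, and then on the inner annulus uses an affine map $P_{j,t}^l$ (a radial stretch combined with a rotation by $\arg c_j^l$) whose dilatation tends to $\max\{m'_j/m_j,\,m_j/m'_j\}$. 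The homotopy check is handled by observing $|\arg c_j^1+\arg c_j^2|<2\pi$. So on the upper bound you and the paper agree.

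On the lower bound you take a genuinely different route. The paper does \emph{not} test on the core curves $\gamma_j$: since $i(\gamma_i,\gamma_j)=0$, Walsh's limit formula $e^{-2t}\ext_{r(t)}(\mu)\to\sum_i m_i\,i(\gamma_i,\mu)^2$ gives $0$ for $\mu=\gamma_j$ and is useless there. Instead the paper quotes Walsh's two theorems as black boxes to get, for any $\mu$ with $i(\gamma_j,\mu)\neq 0$ for some $j$, the ratio $\ext_{r'(t)}(\mu)/\ext_{r(t)}(\mu)\to \sum m'_j i(\gamma_j,\mu)^2/\sum m_j i(\gamma_j,\mu)^2$, and then that the supremum of this over $\mu$ equals $\max_j m'_j/m_j$. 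This yields the lower bound for arbitrary absolutely continuous rays, not just Jenkins--Strebel ones with a common endpoint. Your approach is more elementary and more geometric: test directly on $\gamma_j$ and use the finer asymptotic $\ext_{r(t)}(\gamma_j)\sim 1/(e^{2t}m_j)$. The soft spot is your justification of that asymptotic. The inequality $\ext_{r(t)}(\gamma_j)\le 1/(e^{2t}m_j)$ is immediate, but ``the maximal-modulus annulus homotopic to $\gamma_j$ exceeds $R_j(t)$ only by a modulus bounded independently of $t$'' needs an actual argument: the complement $Y_t\setminus R_j(t)$ is \emph{not} conformally fixed (the other cylinders $R_i(t)$, $i\neq j$, are also growing), so you cannot simply say the outer pieces have bounded modulus by compactness. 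One does get the needed bound from, e.g., Minsky's extremal-length product-region estimates, or by a direct length--area argument in the $w$-coordinates on $A_j^l(\infty)$, but this step is doing real work and deserves more than a sentence. Once that asymptotic is in hand your lower bound is correct, and it has the virtue of avoiding Walsh's machinery; the paper's route has the virtue of working verbatim in the non-Jenkins--Strebel absolutely continuous case.
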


We represent the Jenkins-Strebel rays by $r(t)=[Y_t,g_t\circ f]$, $r'(t)=[Y'_t,g'_t\circ f']$ for any $t\geq 0$, and their end points by $r(\infty )=[Y_{\infty },g_\infty \circ f]$, $r'(\infty )=[Y'_{\infty },g'_\infty \circ f']$ respectively.
Since the measured foliations $f^{-1}_*(H(q))$, $f'^{-1}_*(H(q'))$ are topologically equivalent, there is a homeomorphism $\alpha :X\rightarrow X$ which is homotopic to the identity such that the leaves of $f^{-1}_*(H(q))$ are mapped to the leaves of $f'^{-1}_*(H(q'))$.
Then, the mapping $f'\circ \alpha \circ f^{-1}$ which is homotopic to $f'\circ f^{-1}$ maps the leaves of $H(q)$ to the leaves of $H(q')$.
We denote by $A_j(0)$, $A'_j(0)$ the annuli corresponding to $f_*(\gamma _j)$, $f'_*(\gamma _j)$ respectively, for any $j=1,\cdots ,k$.
They are the same as in \S \ref{end}.
In this assumption, the annuli $A_j(0)$, $A'_j(0)$ have the moduli $m_j$, $m'_j$ respectively, for any $j=1,\cdots ,k$.
Then, the equations $f'_*\circ \alpha _*\circ f_*^{-1}(f_*(\gamma _j))=f'_*(\gamma _j)$, $f'\circ \alpha \circ f^{-1}(A_j(0))=A'_j(0)$ hold for any $j=1,\cdots ,k$.
For any $t\geq 0$ and any $j=1,\cdots ,k$, the annuli $A_j(t)$, $A'_j(t)$ can be determined the same as in \S \ref{end}, and we can see that $(g'_t\circ f')\circ \alpha \circ (g_t\circ f)^{-1}(A_j(t))=A'_j(t)$.

In order to prove the equation of Theorem \ref{main}, we consider the upper and lower estimates of the limit supremum and limit infimum of the distance between given rays respectively.

\subsection{Upper estimate}
First, we show that
	\begin{center}
	$\displaystyle \limsup _{t\rightarrow \infty }d_{T(X)}(r(t),r'(t))\leq \frac{1}{2}\log \max _{j=1,\cdots ,k}\left\{ \frac{m'_j}{m_j},\frac{m_j}{m'_j}\right\}$.
	\end{center}
The idea of the following lemma comes from \cite{Gupta11}.

\begin{lemma}
Let us choose $0<\varepsilon <1$ arbitrary.
Then, for any sufficiently large $t$, there is a quasiconformal mapping $F_t:Y_t\rightarrow Y'_t$ which is homotopic to $(g'_t\circ f')\circ (g_t\circ f)^{-1}$ such that the inequality $\displaystyle \lim _{t\rightarrow \infty }K(F_t)<\max _{j=1,\cdots ,k}\left\{ \frac{m'_j}{m_j},\frac{m_j}{m'_j}\right\}+\varepsilon $ holds.
\end{lemma}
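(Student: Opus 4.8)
The plan is to build $F_t$ one cylinder at a time, using the flat rectangle models $R_j(t)$, $R'_j(t)$ of \S\ref{end}. Recall that $R_j(t)$ is a flat cylinder of circumference $e^{-t}a_j$ and height $e^tb_j$, glued to its neighbours along its two horizontal boundaries according to the critical graph of $q$; the surface $Y'_t$ has the combinatorially identical description with circumferences $e^{-t}a'_j$ and heights $e^tb'_j$, the combinatorics being matched by the topological equivalence $\alpha$. On each horizontal boundary edge, $\alpha$ prescribes a homeomorphism onto the corresponding edge of $R'_j(t)$; since both edge lengths carry the common factor $e^{-t}$, this boundary homeomorphism is, up to that factor, independent of $t$, and on each edge it may be taken affine with slope equal to the ratio $\ell'/\ell$ of the two edge lengths. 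First I would fix, for every edge, such an edge-wise affine boundary homeomorphism. Because the edges bounding a single $R_j(t)$ have in general different length ratios, the induced homeomorphism $\psi_j$ of the bounding circle has varying slopes, and this is the only genuine difficulty.

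The heart of the construction is a scaling profile on $R_j(t)$. Writing a candidate map in flat coordinates with horizontal scaling $\sigma$ and vertical scaling $\tau$, its pointwise dilatation is governed by $\max\{\sigma/\tau,\tau/\sigma\}$ together with a shear term. I would choose $\tau=\tau_j(y)$ and $\sigma=\sigma_j(x,y)$ so that: near each horizontal boundary the map is \emph{conformal and edge-wise affine}, with $\sigma=\tau=\ell'/\ell$ on the strip over each edge, so that the dilatation there is $1$ and the boundary values agree with $\psi_j$; in the central part the map is the Teichm\"{u}ller-efficient affine stretch with $\sigma=a'_j/a_j$ and $\tau\approx b'_j/b_j$, whose dilatation tends to $\max\{m'_j/m_j,m_j/m'_j\}$, since $(a'_j/a_j)^{-1}(b'_j/b_j)=m'_j/m_j$; and in between the pair $(\log\sigma,\log\tau)$ travels along a path joining the boundary value $(\log(\ell'/\ell),\log(\ell'/\ell))$ to the central value $(\log(a'_j/a_j),\log(b'_j/b_j))$. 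Both endpoints lie in the closed strip $|\log\sigma-\log\tau|\le\log\max\{m'_j/m_j,m_j/m'_j\}$ — the first on its diagonal, the second on its boundary — and that strip is convex, so the path can be kept inside it. Hence the dilatation contributed by the scaling never exceeds $\max\{m'_j/m_j,m_j/m'_j\}$.

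It then remains to absorb the shear and the small height discrepancies created when the edge-wise conformal collars, whose images have slightly different heights over different edges, are matched to the central stretch. Here the decisive quantitative point is that every horizontal quantity carries the factor $e^{-t}$: the horizontal displacement between $\psi_j$ and the affine stretch is $O(e^{-t})$ and the step heights produced by the edge-wise collars are $O(1)$, whereas the available vertical length is the full height $e^tb_j\to\infty$. Spreading the reconciliation over this growing height makes every shear term tend to $0$, so the dilatation on $R_j(t)$ tends to $\max\{m'_j/m_j,m_j/m'_j\}$. Carrying this out on each cylinder with matching boundary homeomorphisms yields a homeomorphism $F_t:Y_t\to Y'_t$ that follows the combinatorics of $\alpha$, hence is homotopic to $(g'_t\circ f')\circ\alpha\circ(g_t\circ f)^{-1}\simeq(g'_t\circ f')\circ(g_t\circ f)^{-1}$, with $\limsup_{t\to\infty}K(F_t)\le\max_{j}\{m'_j/m_j,m_j/m'_j\}$. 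Given $\varepsilon$, fixing the collar heights and the path once and for all and taking $t$ large gives $\lim_{t\to\infty}K(F_t)<\max_j\{m'_j/m_j,m_j/m'_j\}+\varepsilon$.

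I expect the main obstacle to be precisely the varying-slope boundary homeomorphism $\psi_j$. A naive interpolation of $\psi_j$ over a collar of bounded height forces the vertical scale to be a single constant that cannot simultaneously match the several forced horizontal slopes, producing dilatation bounded away from $1$ in a manner unrelated to the moduli; reserving a collar of large but fixed modulus does not help either, for the same reason. The resolution — playing the degeneration of the circumference $e^{-t}\to0$ against the growth of the height $e^t\to\infty$, so that all boundary distortion is funnelled into the dilatation-$\max\{m'_j/m_j,m_j/m'_j\}$ cone while every shear is killed — is the crux, and it is exactly where the hypotheses that $q,q'$ are Jenkins--Strebel and topologically equivalent enter.
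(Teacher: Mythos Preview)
Your construction never uses the hypothesis $r(\infty )=r'(\infty )$. You work purely from the topological equivalence $\alpha $ and the edge lengths, so if your argument were sound it would prove the upper bound for \emph{any} pair of topologically equivalent Jenkins--Strebel rays. But that is false: by Corollary~\ref{cor}, two modularly equivalent Jenkins--Strebel rays (so $\max _j\{m'_j/m_j,m_j/m'_j\}=1$) with $r(\infty )\neq r'(\infty )$ are \emph{not} asymptotic, whereas your construction would give $\lim K(F_t)<1+\varepsilon $. So something must break.

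The break is in your ``edge-wise conformal collar''. Over each edge $e$ of the critical graph you set $\sigma =\tau =\ell '_e/\ell _e$ on the strip above $e$. Two adjacent strips inside the same cylinder $R_j(t)$ share a vertical segment above a vertex of $\Gamma _q$; on that segment the left strip sends $(x,y)$ to height $(\ell '_e/\ell _e)y$ and the right strip sends it to height $(\ell '_{e'}/\ell _{e'})y$. Unless all edge ratios around the boundary circle coincide, the map is not even continuous there, let alone conformal. More conceptually, a collar of the critical graph of $q$ in $Y_t$ is, conformally, a fixed neighbourhood of the boundary circles of $Y_\infty $, and likewise for $Y'_t$; a map with dilatation tending to $1$ on such collars would force the components of $Y_\infty -\{\text{nodes}\}$ and $Y'_\infty -\{\text{nodes}\}$ to be biholomorphic in the right homotopy class --- which is exactly the content of $r(\infty )=r'(\infty )$, not of topological equivalence alone. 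The ``spread the shear over height $e^tb_j$'' trick handles the twisting along the core, but it cannot manufacture a nearly conformal map between non-biholomorphic boundary pieces.

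The paper's proof proceeds differently and makes the hypothesis do the work: from $r(\infty )=r'(\infty )$ one has a biholomorphism $h:Y_\infty \rightarrow Y'_\infty $, and on each half-annulus $A_j^l(\infty )$ it has a Taylor expansion $h_j^l(z)=c_j^lz+O(z^2)$ at the node. One then defines $F_t$ on $A_j^l(t)=\{\delta _j(t)\le |z|<1\}$ to equal $h_j^l$ on the outer part $\{2\Delta _j(t)\le |z|<1\}$ (dilatation $1$), an explicit radial--affine interpolation $P_{j,t}^l$ on the inner part $\{\delta _j(t)\le |z|\le \Delta _j(t)\}$ whose dilatation tends to $\max\{M_j,1/M_j\}$ with $M_j=m'_j/m_j$, and a bridging map $Q_{j,t}^l$ between $c_j^lz$ and $h_j^l(z)$ on the thin middle ring, whose dilatation tends to $1$. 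The homotopy class is controlled by the arguments $\arg c_j^l$. This is where your approach needs to be rebuilt: replace the edge-wise affine collar by the actual biholomorphism $h$ near the critical graph, and only then interpolate toward the core.
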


\begin{proof}
We set $M_j=\frac{m'_j}{m_j}$ for any $j=1,\cdots ,k$.
By the equation $r(\infty )=r'(\infty )$, there exists a biholomorphic mapping $h:Y_{\infty }\rightarrow Y'_{\infty }$ such that $h\circ g_{\infty }\circ f$ is homotopic to $g'_{\infty }\circ f'$.
From \S \ref{end}, we can write
	\begin{center}
	$\displaystyle Y_{\infty }=\bigcup _{j=1}^k\overline{A_j^1(\infty )}\cup \overline{A_j^2(\infty )}$,\\
	$\displaystyle Y'_{\infty }=\bigcup _{j=1}^k\overline{A_j^{'1}(\infty )}\cup \overline{A_j^{'2}(\infty )}$,
	\end{center}
where $A_j^l(\infty )$, $A_j^{'l}(\infty )$ are the punctured disk $\mathbb D^*=\{ z\in \mathbb C\ |\ 0<|z|<1\}$ for any $j=1,\cdots ,k$ and $l=1,2$.
Now, we fix any $j=1,\cdots ,k$ and $l=1,2$.
We set $h_j^l=h|_{A_j^l(\infty )}: A_j^l(\infty )\rightarrow h(A_j^l(\infty ))\subset Y'_{\infty }$.
Since $h$ is a biholomorphic mapping, then we can set $h_j^l(0)=0$ and $\frac{dh_j^l(z)}{dz}\big|_{z=0}\not =0$.
We describe $h_j^l(z)=c_j^lz+c_{j,2}^lz^2+\cdots =c_j^lz+\psi _j^l(z)$ where $c_j^l\not =0$, $-\pi<\arg c_j^1\leq \pi$ and $-\pi\leq \arg c_j^2<\pi$.
For any $t\geq 0$, we set $\delta _j(t)=\exp(-e^{2t}m_j\pi)$, $\delta '_j(t)=\exp(-e^{2t}m'_j\pi)$, then $\delta '_j(t)=\delta _j(t)^{M_j}$.
After this, we assume that $A_j^l(t)=\mathbb D^*-\mathbb D_{\delta _j(t)}=\{ z\in \mathbb C\ |\ \delta _j(t)\leq |z|<1\}$ and $A_j^{'l}(t)=\mathbb D^*-\mathbb D_{\delta '_j(t)}=\{ z\in \mathbb C\ |\ \delta '_j(t)\leq |z|<1\}$ for any $t\geq 0$.
For sufficiently large $t$, we construct a quasiconformal mapping $F_{j,t}^l:\mathbb D^*-\mathbb D_{\delta _j(t)}\rightarrow h_j^l(\mathbb D^*)-\mathbb D_{\delta '_j(t)}$.
We consider the following three cases (1), (2) and (3).

(1) In the case of $M_j>1$, we take $X_j$ as
	\begin{center}
	$\displaystyle X_j<\frac{\log \frac{\varepsilon }{M_j+\varepsilon -1}}{\log M_j}<0$.
	\end{center}
This is equivalent to
	\begin{center}
	$\displaystyle M_j^{X_j}<\frac{\varepsilon }{M_j+\varepsilon -1}<1$
	\end{center}
and
	\begin{center}
	$\displaystyle \frac{M_j-M_j^{X_j}}{1-M_j^{X_j}}<M_j+\varepsilon $.
	\end{center}
We take sufficiently large $t$ such that the inequality $\delta _j(t)^{M_j}<|c_j^l|\delta _j(t)^{M_j^{X_j}}$ holds.
We set $\Delta _j(t)=\delta _j(t)^{M_j^{X_j}}$.
We construct $F_{j,t}^l$ by the following:
	\begin{eqnarray}
	F_{j,t}^l(z)=
	\left\{
	\begin{array}{lll}
	P_{j,t}^l(z)&(\delta _j(t)\leq |z|\leq \Delta _j(t))&{\rm (i)}\\ \nonumber
	Q_{j,t}^l(z)&(\Delta _j(t)\leq |z|\leq 2\Delta _j(t))&{\rm (ii)}\\ \nonumber
	h_j^l(z)&(2\Delta _j(t)\leq |z|<1)&{\rm (iii)} \nonumber
	\end{array}
	\right.
	\end{eqnarray}

(i) In $\delta _j(t)\leq |z|\leq \Delta _j(t)$, we set
	\begin{center}
	$P_{j,t}^l(z)=\Delta _j(t)^\frac{1-M_j}{1-M_j^{X_j}}\cdot {c_j^l}^{\frac{1}{1-M_j^{X_j}}+\frac{\log |z|}{\log \Delta _j(t)-\log \delta _j(t)}}\cdot |z|^{-\frac{1-M_j}{1-M_j^{X_j}}}\cdot z$
	\end{center}
which satisfies $P_{j,t}^l(z)=\delta _j(t)^{M_j-1}\cdot z$ on $|z|=\delta _j(t)$, $P_{j,t}^l(z)=c_j^lz$ on $|z|=\Delta _j(t)$.
We can construct this function as follows.
In Figure \ref{ringdomain}, the mapping $\kappa _1$ is a conformal mapping
	\begin{center}
	$\displaystyle \kappa _1(z)=\frac{e^{-t}a_j}{2\pi i}\log z$,
	\end{center}
the mapping $\kappa _2$ is a translation
	\begin{center}
	$\displaystyle \kappa _2(z)=z-i\frac{e^tb_jM_j^{X_j}}{2}$,
	\end{center}
the mapping $\kappa _3$ is an affine transformation
	\begin{center}
	$\displaystyle \kappa _3:z=x+iy\mapsto x+\alpha _j^l(t)y+i\beta _j^l(t)y$
	\end{center}
such that
	\begin{center}
	$\displaystyle \alpha _j^l(t)=\frac{-\arg c_j^l}{e^{2t}m_j\pi (1-M_j^{X_j})}$, $\beta _j^l(t)=\frac{M_j-M_j^{X_j}+\frac{\log |c_j^l|}{e^{2t}m_j\pi }}{1-M_j^{X_j}}$,
	\end{center}
the mapping $\kappa _4$ is also a translation
	\begin{center}
	$\displaystyle \kappa _4(z)=z+\frac{e^{-t}a_j}{2\pi }\arg c_j^l+i\left(\frac{e^tb_jM_j^{X_j}}{2}-\frac{e^{-t}a_j}{2\pi }\log |c_j^l|\right)$,
	\end{center}
and the mapping $\kappa _5$ is a conformal mapping
	\begin{center}
	$\displaystyle \kappa _5(z)=\exp\left(\frac{2\pi i}{e^{-t}a_j}z\right)$
	\end{center}
which is the inverse of $\kappa _1$.
The composition $P_{j,t}^l=\kappa _5\circ \kappa _4\circ \kappa _3\circ \kappa _2\circ \kappa _1$ is the desired function.

	\begin{figure}[!ht]
	\begin{center}
	\includegraphics[keepaspectratio, scale=0.65]
	{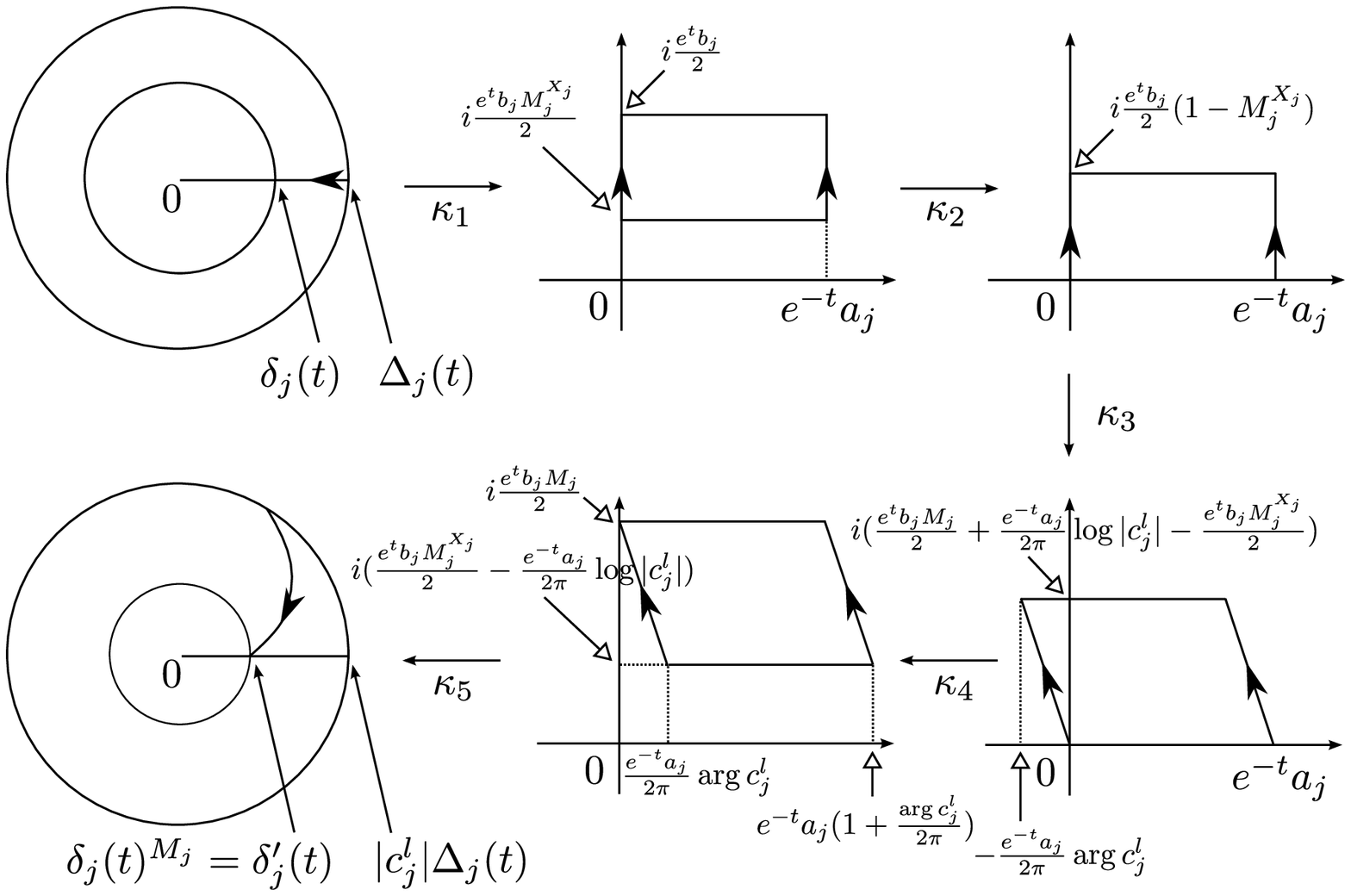}
	\caption{}
	\label{ringdomain}
	\end{center}
	\end{figure}

By the construction, the mapping $P_{j,t}^l$ is a quasiconformal mapping, and its dilatation is equal to $K(\kappa _3)$.
We see that $\alpha _j^l(t)\rightarrow 0$, $\beta _j^l(t)\rightarrow \frac{M_j-M_j^{X_j}}{1-M_j^{X_j}}>1$ as $t\rightarrow \infty $.
Then, the maximal dilatation of $P_{j,t}^l$ satisfies
	\begin{center}
	$\displaystyle K(P_{j,t}^l)=\frac{|1+\beta _j^l(t)-i\alpha _j^l(t)|+|1-\beta _j^l(t)+i\alpha _j^l(t)|}{|1+\beta _j^l(t)-i\alpha _j^l(t)|-|1-\beta _j^l(t)+i\alpha _j^l(t)|}\rightarrow \frac{M_j-M_j^{X_j}}{1-M_j^{X_j}}<M_j+\varepsilon $
	\end{center}
as $t\rightarrow \infty $.

(ii) In $\Delta _j(t)\leq |z|\leq 2\Delta _j(t)$, we set
	\begin{center}
	$Q_{j,t}^l(z)=c_j^lz+\phi _{\Delta _j(t)}(|z|)\psi _j^l(z)$,
	\end{center}
where $\phi _{\Delta _j(t)}:[\Delta _j(t),2\Delta _j(t)]\rightarrow [0,1]$ is defined by
	\begin{center}
	$\displaystyle \phi _{\Delta _j(t)}(|z|)=\frac{|z|}{\Delta _j(t)}-1$.
	\end{center}
This function satisfies $Q_{j,t}^l(z)=c_j^lz$ on $|z|=\Delta _j(t)$, $Q_{j,t}^l(z)=h_j^l(z)$ on $|z|=2\Delta _j(t)$.	
We consider the partial derivatives of $Q_{j,t}^l$,
	\begin{center}
	$\displaystyle \partial _{\bar{z}}Q_{j,t}^l=\frac{1}{2\Delta _j(t)}z^{\frac{1}{2}}\bar{z}^{-\frac{1}{2}}\psi _j^l(z)$,\\
	$\displaystyle \partial _zQ_{j,t}^l=c_j^l+\frac{1}{2\Delta _j(t)}z^{-\frac{1}{2}}\bar{z}^{\frac{1}{2}}\psi _j^l(z)+\phi _{\Delta (t)}(|z|)\frac{d\psi _j^l(z)}{dz}$.
	\end{center}
There is $C>0$ such that $|\psi _j^l(z)|\leq C\Delta _j(t)^2$ for sufficiently large $t$.
We see that
	\begin{center}
	$\displaystyle \left|\frac{1}{2\Delta _j(t)}z^{\frac{1}{2}}\bar{z}^{-\frac{1}{2}}\psi _j^l(z)\right|=\left|\frac{1}{2\Delta _j(t)}z^{-\frac{1}{2}}\bar{z}^{\frac{1}{2}}\psi _j^l(z)\right|=\frac{|\psi _j^l(z)|}{2\Delta _j(t)}\leq \frac{C\Delta _j(t)}{2}\rightarrow 0$	
	\end{center}
as $t\rightarrow 0$.
Then, $|\partial _{\bar{z}}Q_{j,t}^l|\rightarrow 0$, $|\partial _zQ_{j,t}^l|\rightarrow |c_j^l|\not =0$ as $t\rightarrow 0$.
For sufficiently large $t$, $\jac Q_{j,t}^l=|\partial _zQ_{j,t}^l|^2-|\partial _{\bar{z}}Q_{j,t}^l|^2\not =0$.
Hence, $Q_{j,t}^l$ is a local homeomorphism.
We denote by $D$ the closed set whose boundary is consists of two components $Q_{j,t}^l(\{ |z|=\Delta _j(t)\} )=\{ |w|=|c_j^l|\Delta _j(t)\}$ and $Q_{j,t}^l(\{ |z|=2\Delta _j(t)\} )=h_j^l(\{ |z|=2\Delta _j(t)\} )$, and its fundamental group is $\pi _1(D)=\mathbb Z$.
The equation $Q_{j,t}^l(\{ \Delta _j(t)\leq |z|\leq 2\Delta _j(t)\})=D$ holds because $Q_{j,t}^l$ is a local homeomorphism with the above boundary conditions.
We regard the mapping $Q_{j,t}^l:\{ \Delta _j(t)\leq |z|\leq 2\Delta _j(t)\}\rightarrow D$ as a covering mapping.
Let $Q_{j,t*}^l:\pi _1(\{ \Delta _j(t)\leq |z|\leq 2\Delta _j(t)\} )\rightarrow \pi _1(D)$ be the group homomorphism induced by $Q_{j,t}^l$.
We see that $Q_{j,t*}^l(\pi _1(\{ \Delta _j(t)\leq |z|\leq 2\Delta _j(t)\} ))=\mathbb Z\triangleleft \pi _1(D)$ because $Q_{j,t}^l(z)=c_j^lz$ on $|z|=\Delta _j(t)$.
Then, the covering mapping $Q_{j,t}^l$ is a regular covering, and its covering transformation group is $\mathbb Z/\mathbb Z=1$.
Therefore, $Q_{j,t}^l$ is a homeomorphism.
By the derivatives of $Q_{j,t}^l$, for sufficiently large $t$, it is a quasiconformal mapping such that its dilatation holds $K(Q_{j,t}^l)\rightarrow 1$ as $t\rightarrow \infty $.

(iii) In $2\Delta _j(t)\leq |z|<1$, $F_{j,t}^l(z)=h_j^l(z)$ and $K(h_j^l)=1$.

Therefore, for sufficiently large $t$, we obtain the quasiconformal mapping $F_{j,t}^l$ such that
	\begin{center}
	$\displaystyle K(F_{j,t}^l)=\max \{ K(P_{j,t}^l),K(Q_{j,t}^l)\} \rightarrow \frac{M_j-M_j^{X_j}}{1-M_j^{X_j}}<M_j+\varepsilon $
	\end{center}
as $t\rightarrow \infty $.

(2)
In the case of $M_j<1$, we take $X_j$ as
	\begin{center}
	$\displaystyle X_j>\frac{\log \frac{M_j\varepsilon }{\frac{1}{M_j}-1+\varepsilon }}{\log M_j}>2$.
	\end{center}
This is equivalent to
	\begin{center}
	$\displaystyle M_j^{X_j}<\frac{M_j\varepsilon }{\frac{1}{M_j}-1+\varepsilon }<M_j^2$
	\end{center}
and
	\begin{center}
	$\displaystyle \frac{1-M_j^{X_j}}{M_j-M_j^{X_j}}<\frac{1}{M_j}+\varepsilon $.
	\end{center}
We take sufficiently large $t$ such that the inequality $\delta _j(t)^{M_j}<|c_j^l|\delta _j(t)^{M_j^{X_j}}$ holds.
We also set $\Delta _j(t)=\delta _j(t)^{M_j^{X_j}}$, and also construct $F_{j,t}^l$ following.
	\begin{eqnarray}
	F_{j,t}^l(z)=
	\left\{
	\begin{array}{ll}
	P_{j,t}^l(z)&(\delta _j(t)\leq |z|\leq \Delta _j(t))\\ \nonumber
	Q_{j,t}^l(z)&(\Delta _j(t)\leq |z|\leq 2\Delta _j(t))\\ \nonumber
	h_j^l(z)&(2\Delta _j(t)\leq |z|<1) \nonumber
	\end{array}
	\right.
	\end{eqnarray}
The functions $P_{j,t}^l$, $Q_{j,t}^l$ have the same notations as in the case of (1).
The difference is only the dilatation of $P_{j,t}^l$.
In this case, $\beta _j^l(t)\rightarrow \frac{M_j-M_j^{X_j}}{1-M_j^{X_j}}<1$ as $t\rightarrow \infty $.
Therefore,
	\begin{center}
	$\displaystyle K(P_{j,t}^l)\rightarrow \frac{1-M_j^{X_j}}{M_j-M_j^{X_j}}<\frac{1}{M_j}+\varepsilon $
	\end{center}
as $t\rightarrow \infty $.
Similarly as in the case of (1), for sufficiently large $t$, we obtain the quasiconformal mapping $F_{j,t}^l$ such that
	\begin{center}
	$\displaystyle K(F_{j,t}^l)=\max \{ K(P_{j,t}^l),K(Q_{j,t}^l)\} \rightarrow \frac{1-M_j^{X_j}}{M_j-M_j^{X_j}}<\frac{1}{M_j}+\varepsilon $
	\end{center}
as $t\rightarrow \infty $.

(3)
In the case of $M_j=1$, we take sufficiently large $t$ such that the inequality $\delta _j(t)<|c_j^l|\delta _j(t)^\frac{1}{2}$ holds and set $\Delta _j(t)=\delta _j(t)^\frac{1}{2}$.
We set
	\begin{eqnarray}
	F_{j,t}^l(z)=
	\left\{
	\begin{array}{ll}
	P_{j,t}^l(z)={c_j^l}^{2\left(1-\frac{\log |z|}{\log \delta _j(t)}\right)}z&(\delta _j(t)\leq |z|\leq \Delta _j(t))\\ \nonumber
	Q_{j,t}^l(z)&(\Delta _j(t)\leq |z|\leq 2\Delta _j(t))\\ \nonumber
	h_j^l(z)&(2\Delta _j(t)\leq |z|<1) \nonumber
	\end{array}
	\right.
	\end{eqnarray}
The functions $P_{j,t}^l$, $Q_{j,t}^l$ are constructed similarly as in the case of (1).
On $P_{j,t}^l$, the above notation is obtained by changing $M_j$, $M_j^{X_j}$ to $1$, $\frac{1}{2}$ respectively, in (i) of the case of (1).
In this time, $\alpha _j^l(t)=\frac{-2\arg c_j^l}{e^{2t}m_j\pi }\rightarrow 0$, $\beta _j^l(t)=1+\frac{2\log |c_j^l|}{e^{2t}m_j\pi }\rightarrow 1$ and then, $K(P_{j,t}^l)\rightarrow 1$ as $t\rightarrow \infty $.
The function $Q_{j,t}^l$ also satisfying $K(Q_{j,t}^l)\rightarrow 1$ as $t\rightarrow \infty $.
Therefore, for sufficiently large $t$, the quasiconformal mapping $F_{j,t}^l$ satisfies
	\begin{center}
	$\displaystyle K(F_{j,t}^l)=\max \{ K(P_{j,t}^l),K(Q_{j,t}^l)\} \rightarrow 1$
	\end{center}
as $t\rightarrow \infty $.

Thus, by (1), (2), and (3), for sufficiently $t$, we construct the quasiconformal mapping $F_t:Y_t\rightarrow Y'_t$ by gluing $\{ F_{j,t}^l\} _{j=1,\cdots ,k}^{l=1,2}$.
For any case of (1), (2), and (3), each $h_j^l$ is homotopic to $(g'_t\circ f')\circ (g_t\circ f)^{-1}$.
Each $Q_{j,t}^l$ satisfies $K(Q_{j,t}^l)\rightarrow 1$ as $t\rightarrow \infty $ and the domain $\{ \Delta _j(t)<|z|<2\Delta _j(t)\}$ has the constant modulus for any $t$.
Each $P_{j,t}^l$ produces the twist of angle $\arg c_j^l$ in the domain $\{ \delta _j(t)<|z|<\Delta _j(t)\}$ and satisfies $|\arg c_j^1+\arg c_j^2|<2\pi $.
Therefore, for sufficiently $t$, the mapping $F_t$ is homotopic to $(g'_t\circ f')\circ (g_t\circ f)^{-1}$.
We conclude that
	\begin{center}
	$\displaystyle \lim _{t\rightarrow \infty }K(F_t)=\lim _{t\rightarrow \infty }\max _{j=1,\cdots ,k,l=1,2}K(F_{j,t}^l)<\max _{j=1,\cdots ,k}\left\{ M_j,\frac{1}{M_j}\right\} +\varepsilon $.
	\end{center}
\end{proof}

Therefore, by this lemma, for any sufficiently large $t$, the inequality
	\begin{center}
	$\displaystyle \limsup _{t\rightarrow \infty }d_{T(X)}(r(t),r'(t))\leq \lim _{t\rightarrow \infty }\frac{1}{2}\log K(F_t)<\frac{1}{2}\log \left(\max _{j=1,\cdots ,k}\left\{ \frac{m'_j}{m_j},\frac{m_j}{m'_j}\right\}+\varepsilon \right)$
	\end{center}
holds.
Since $\varepsilon$ is arbitrary, we are done.

\subsection{Lower estimate}
Next, we show that
	\begin{eqnarray}
	\liminf _{t\rightarrow \infty }d_{T(X)}(r(t),r'(t))\geq \frac{1}{2}\log \max _{j=1,\cdots ,k}\left\{ \frac{m'_j}{m_j},\frac{m_j}{m'_j}\right\}. \label{liminf}
	\end{eqnarray}
We use the following theorems.

\begin{thm}{\rm (\cite{Walsh12})} \label{Walsh1}
Let $r$ be a Teichm\"{u}ller geodesic ray on $T(X)$ starting at $p=[Y,f]$ and having the unit norm holomorphic quadratic differential $q$ on $Y$ with the corresponding measured foliation $H(q)=\sum _{j=1}^kb_jf_*(G_j)\in \mathcal {MF}(Y)$ where $G_j$ is a simple closed curve or an ergodic measure for any $j=1,\cdots ,k$ such that these are projectively-distinct and pairwise having zero intersection number, and $b_j>0$ if $G_j\in \mathcal S$, $b_j\geq 0$ if $G_j$ is an ergodic measure.
We set $m_j=\frac{b_j}{i(f_*(G_j),V(q))}$ for any $j=1,\cdots ,k$.
If we write
	\begin{center}
	$\displaystyle \mathcal E_r(\mu )=\left\{ \sum _{j=1}^km_ji(G_j,\mu )^2\right\} ^\frac{1}{2}$
	\end{center}
for any $\mu \in \mathcal {MF}(X)$, then, the equation
	\begin{center}
	$\displaystyle \lim _{t\rightarrow \infty }e^{-2t}\ext _{r(t)}(\mu )=\sum _{j=1}^km_ji(f_*(G_j),f_*(\mu ))^2=\mathcal E_r(\mu )^2$
	\end{center}
holds.
\end{thm}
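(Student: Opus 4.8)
The plan is to prove the two inequalities $\liminf_{t\to\infty} e^{-2t}\ext_{r(t)}(\mu)\ge \mathcal E_r(\mu)^2$ and $\limsup_{t\to\infty} e^{-2t}\ext_{r(t)}(\mu)\le \mathcal E_r(\mu)^2$ separately, working on the surfaces $Y_t$ through $\nu:=f_*(\mu)$ and $i_j:=i(f_*(G_j),\mu)=i(f_*(G_j),f_*(\mu))$, so that $\mathcal E_r(\mu)^2=\sum_{j=1}^k m_j i_j^2$. First I would record the flat geometry: in the $q$-coordinate the component $C_j$ carrying $f_*(G_j)$ is, at time $t$, a flat cylinder of circumference $e^{-t}a_j$, height $e^t b_j$ and modulus $e^{2t}m_j$, with the Teichm\"uller map preserving $q$-area; the core $f_*(G_j)$ runs in the circumference direction, so $i_j$ is the minimal number of arcs of $\nu$ crossing $C_j$ from one boundary to the other, each of $q_t$-length at least the height $e^t b_j$. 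Since extremal length is continuous on $\mathcal{MF}$ (Kerckhoff) and weighted simple closed curves are dense (Thurston), for the upper estimate I may assume $\nu$ is a weighted simple closed curve; the lower estimate needs no such reduction.

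For the lower bound I would feed a one-parameter family of competitor metrics into the definition of $\ext$. Put on each $C_j$ the metric $\rho=\lambda_j\,|dz_t|$ (the flat $q_t$-metric scaled by $\lambda_j>0$) and $0$ on the critical graph. Then $A_\rho=\sum_j \lambda_j^2 a_j b_j$, while every representative of $\nu$ has $\rho$-length at least $\sum_j \lambda_j i_j e^t b_j$ because it must cross $C_j$ at least $i_j$ times in the height direction. Hence $\ext_{Y_t}(\nu)\ge (\sum_j \lambda_j i_j e^t b_j)^2/\sum_j \lambda_j^2 a_j b_j$, and optimizing over $(\lambda_j)$ by Cauchy--Schwarz (the maximizer is $\lambda_j\propto i_j e^t b_j/(a_j b_j)$) gives the clean identity-valued bound $\ext_{Y_t}(\nu)\ge e^{2t}\sum_j m_j i_j^2$ for every $t$, so $\liminf e^{-2t}\ext_{r(t)}(\mu)\ge \mathcal E_r(\mu)^2$.

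For the upper bound I would fix an arbitrary admissible metric $\rho$ with $A_\rho=1$ and build a short representative of $\nu$. The extremal length of the family of crossing arcs of the flat cylinder $C_j$ equals its modulus $e^{2t}m_j$, so for the fixed $\rho$ there is a single crossing arc of $C_j$ of $\rho$-length at most $e^t\,(m_j A_\rho(C_j))^{1/2}$, where $A_\rho(C_j)$ is the $\rho$-area of $C_j$ (restrict $\rho$ to $C_j$ and apply the cylinder's extremal-length inequality). Taking $i_j$ parallel copies in each $C_j$ and splicing them into a curve homotopic to $\nu$ through the critical graph, the total $\rho$-length is at most $\sum_j i_j e^t (m_j A_\rho(C_j))^{1/2}$ plus the length of the splicing arcs. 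By Cauchy--Schwarz together with $\sum_j A_\rho(C_j)\le A_\rho=1$, the main term is at most $e^t(\sum_j m_j i_j^2)^{1/2}$; since this holds for every $\rho$, it yields $\ext_{Y_t}(\nu)\le (1+o(1))\,e^{2t}\sum_j m_j i_j^2$, matching the lower bound.

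The main obstacle is making this upper bound uniform in $\rho$: the splicing arcs live near the fixed critical graph, and an adversarial $\rho$ could concentrate its length there, so I must show their total $\rho$-length is $o(e^t)$. I would handle this by routing the connectors slightly inside the cylinders (where the crossing estimate already controls lengths) and using that the critical graph is a fixed finite $1$-complex of bounded combinatorial type, so the extra length is $O(A_\rho^{1/2})=O(1)$, negligible against $e^t$. A secondary subtlety is the general (non-Jenkins--Strebel) case, in which a $G_j$ may be an ergodic measure on a minimal domain rather than a cylinder: there $i_j$ still equals $i(f_*(G_j),\nu)$, but the length--area comparison must be run over the recurrent trajectories of the domain, where unique ergodicity of the corresponding ergodic measure on that component is used to control the geodesic representative. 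In the Jenkins--Strebel setting needed for Theorem \ref{main} only the cylinder case arises, and the argument above then closes both estimates.
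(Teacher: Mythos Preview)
The paper does not prove this statement: Theorem~\ref{Walsh1} is quoted from Walsh and used as a black box in the lower estimate of \S3.2, so there is no argument in the paper to compare your proposal against.

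On the proposal itself: your lower bound is correct in the Jenkins--Strebel case and in fact yields the sharp inequality $e^{-2t}\ext_{r(t)}(\mu)\ge\sum_j m_j i_j^2$ for every $t$, not only in the limit; this alone would already suffice for the use the present paper makes of Theorem~\ref{Walsh1}. The upper bound, however, has a real gap at exactly the point you label the ``main obstacle''. The short crossing arc of $C_j$ you extract depends on the competitor metric $\rho$, so the endpoints to be spliced move with $\rho$; the assertion that the connectors have $\rho$-length $O(A_\rho^{1/2})$ is not justified by saying the critical graph is a fixed finite $1$-complex, since $\rho$ may concentrate along it, and routing ``slightly inside the cylinders'' still requires a length--area argument that you have not supplied. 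A workable version would fix a thin collar of definite modulus at each end of each $C_j$ and use a second extremal-length estimate there to produce connectors of controlled $\rho$-length, but this must be written out.

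The more serious gap is the general case. Both halves of your argument use the cylinder structure (explicit height $e^t b_j$, crossing arcs, modulus $e^{2t}m_j$), whereas the theorem as stated allows $G_j$ to be an ergodic transverse measure on a minimal domain. Your single sentence on this case does not say what replaces the family of crossing arcs, why the $|q_t|$-area of the component equals $a_jb_j$, or how ergodicity enters; without this the proof is incomplete for the statement as written. If your aim is only the Jenkins--Strebel case needed in this paper, you should say so and then close the splicing step carefully.
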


\begin{thm}{\rm (\cite{Walsh12})} \label{Walsh2}
Let $r$, $r'$ be Teichm\"{u}ller geodesic rays on $T(X)$ starting at $p=[Y,f]$, $p'=[Y',f']$ and having the unit norm holomorphic quadratic differentials $q$, $q'$ on $Y$, $Y'$, and $H(q)\in \mathcal {MF}(Y)$, $H(q')\in \mathcal {MF}(Y')$ be the corresponding measured foliations respectively.
We set $\displaystyle Z=\{ \mu \in \mathcal {MF}(X)^*|\ \mathcal E_r(\mu )=\mathcal E_{r'}(\mu )=0\}$.
Suppose that $H(q)=\sum _{j=1}^kb_jf_*(G_j)$, where $b_1,\cdots ,b_k$ are positive real numbers and $G_j$ is a simple closed curve or an ergodic measure for any $j=1,\cdots ,k$ such that these are projectively-distinct and pairwise having zero intersection number.
If we can write $H(q')=\sum _{j=1}^kb'_jf'_*(G_j)$ where $b'_j\geq 0$, and set $m_j=\frac{b_j}{i(f_*(G_j),V(q))}$, $m'_j=\frac{b'_j}{i(f'_*(G_j),V(q'))}$ for any $j=1,\cdots ,k$, then,
	\begin{center}
	$\displaystyle \sup _{\mu \in \mathcal {MF}(X)^*-Z}\frac{\mathcal E_{r'}(\mu )^2}{\mathcal E_r(\mu )^2}=\max _{j=1,\cdots ,k}\left\{ \frac{m'_j}{m_j}\right\}$.
	\end{center}
Otherwise, the above supremum is $+\infty $.
\end{thm}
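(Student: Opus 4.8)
The plan is to turn the supremum into an elementary extremal problem for a ratio of nonnegative quadratic forms, after which the statement splits into a trivial inequality and one genuinely topological construction. Theorem~\ref{Walsh1} already expresses the excess functions over $X$, and invariance of the intersection number gives $i(f_*(G_j),f_*(\mu))=i(G_j,\mu)$; so in the principal case, where $H(q')$ is carried by the same components, I would record
\[
\mathcal E_r(\mu)^2=\sum_{j=1}^{k}m_j\,i(G_j,\mu)^2,\qquad \mathcal E_{r'}(\mu)^2=\sum_{j=1}^{k}m'_j\,i(G_j,\mu)^2 .
\]
Writing $x_j=i(G_j,\mu)^2\ge 0$, the quotient in question is $\big(\sum_j m'_j x_j\big)\big/\big(\sum_j m_j x_j\big)$. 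Since every $m_j>0$, the denominator vanishes exactly on $\{\,i(G_j,\mu)=0\ \text{for all }j\,\}$, which is precisely $Z$; hence on $\mathcal{MF}(X)^*-Z$ the ratio is finite and well defined.

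For the upper bound I would read the quotient as a weighted average:
\[
\frac{\sum_{j}m'_j x_j}{\sum_{j}m_j x_j}=\sum_{j}\frac{m'_j}{m_j}\cdot\frac{m_j x_j}{\sum_{i}m_i x_i}\le \max_{j}\frac{m'_j}{m_j},
\]
because the coefficients $m_j x_j\big/\sum_i m_i x_i$ are nonnegative and sum to $1$. This gives $\sup\le \max_j m'_j/m_j$ with no analytic input beyond the convexity of the average.

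For the reverse inequality I would show the maximum is attained by exhibiting, for each index $j^*$, a test foliation $\mu_{j^*}\in\mathcal{MF}(X)^*$ that isolates the $j^*$-th factor: $i(G_{j^*},\mu_{j^*})>0$ while $i(G_j,\mu_{j^*})=0$ for all $j\ne j^*$. Evaluating the rewritten quotient at $\mu_{j^*}$ collapses every term but one and returns exactly $m'_{j^*}/m_{j^*}$, so choosing $j^*$ to maximize $m'_j/m_j$ forces the supremum to equal $\max_j m'_j/m_j$. The construction exploits that $G_1,\dots,G_k$ are carried by pairwise disjoint pieces of $X-\Gamma_q$ with zero mutual intersection: when $G_{j^*}$ is a simple closed curve one takes a transversal crossing its annulus and closing up through the critical graph without meeting any other core, and when $G_{j^*}$ is an ergodic component one takes an essential curve inside its minimal domain, transverse to the filling foliation $G_{j^*}$ but disjoint from the remaining $G_j$. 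The same mechanism settles the remaining case: if $H(q')$ cannot be written through the $G_j$, its decomposition contains a component $G'$ that is topologically new, and a foliation transverse to $G'$ yet disjoint from every $G_j$ produces $\mu$ with $\mathcal E_r(\mu)=0<\mathcal E_{r'}(\mu)$, so $\mu\in\mathcal{MF}(X)^*-Z$ and the ratio is $+\infty$.

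The main obstacle is precisely this construction of the isolating (and, in the last case, of the separating) foliations. The delicate point is that one cannot take $\mu_{j^*}=G_{j^*}$, since every measured foliation has zero self-intersection; the test object must be genuinely transverse to $G_{j^*}$ while remaining disjoint from all other components. Making this rigorous amounts to a careful use of the decomposition of $X-\Gamma_q$ into annuli and minimal domains, checking that a transversal to one piece can be routed through the singular graph without acquiring intersection with the cores or minimal foliations of the other pieces. Once this topological fact is secured, the extremal value follows at once from the weighted-average identity above.
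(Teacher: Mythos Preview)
The paper does not prove this statement at all: Theorem~\ref{Walsh2} is quoted from \cite{Walsh12} and used as a black box in the lower estimate of \S3.2, so there is no ``paper's own proof'' to compare against. That said, your reduction is exactly the right one: once Theorem~\ref{Walsh1} gives $\mathcal E_r(\mu)^2=\sum_j m_j\,i(G_j,\mu)^2$ and likewise for $r'$, the upper bound $\sup\le\max_j m'_j/m_j$ is the weighted-average identity you wrote, and the ``otherwise'' clause follows from producing $\mu$ with $\mathcal E_r(\mu)=0<\mathcal E_{r'}(\mu)$.

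There is, however, a genuine gap in your isolating construction when several $G_j$ are ergodic measures carried by the \emph{same} minimal domain. You propose to take an essential curve $\alpha$ inside that minimal domain, transverse to the foliation. But every ergodic transverse measure on a minimal foliation gives positive mass to every transversal arc; hence $i(G_j,\alpha)>0$ for \emph{all} ergodic components $G_j$ supported in that domain, not just for $G_{j^*}$. Your test curve therefore does not isolate a single term, and the ratio you obtain is only a convex combination of the $m'_j/m_j$ over that domain, which need not hit the maximum. The fix is not to look for a single isolating curve but to use that projectively distinct ergodic measures define linearly independent intersection functionals: one can choose a sequence $\mu_n$ (for instance, long weighted arcs or suitable linear combinations in $\mathcal{MF}(X)$) for which $i(G_{j^*},\mu_n)^2\big/\sum_{j\ne j^*} i(G_j,\mu_n)^2\to\infty$, so that the quotient tends to $m'_{j^*}/m_{j^*}$. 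This limiting argument, rather than a single isolating curve, is what is needed; Walsh's paper carries out the analogous step. Your sketch is fine when each minimal domain is uniquely ergodic and for the annular (simple-closed-curve) pieces.
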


\begin{rem}
If $f^{-1}_*(H(q))$, $f'^{-1}_*(H(q'))$ are absolutely continuous, then $\sup \frac{\mathcal E_{r'}(\mu )^2}{\mathcal E_r(\mu )^2}$ and $\sup \frac{\mathcal E_r(\mu )^2}{\mathcal E_{r'}(\mu )^2}$ are both finite.
\end{rem}

We notice that the desired inequality (\ref{liminf}) holds for the rays $r$, $r'$ which satisfy that $f^{-1}_*(H(q))$, $f'^{-1}_*(H(q'))$ are absolutely continuous.
So, we do not require the conditions that the rays are Jenkins-Strebel and $r(\infty )=r'(\infty )$.
We write $H(q)=\sum _{j=1}^kb_jf_*(G_j)$, $H(q')=\sum _{j=1}^kb'_jf'_*(G_j)$ where $b_j$, $b'_j>0$ and set $m_j=\frac{b_j}{i(f_*(G_j),V(q))}$, $m'_j=\frac{b'_j}{i(f'_*(G_j),V(q'))}$ for any $j=1,\cdots ,k$.
Therefore, in our case, by Kerckhoff's formula and the above two theorems,
	\begin{eqnarray}
	\liminf _{t\rightarrow \infty }d_{T(X)}(r(t),r'(t)) \nonumber
	&=&\liminf _{t\rightarrow \infty }\frac{1}{2}\log \sup _{\mu \in \mathcal {MF}(X)^*}\frac{\ext _{r'(t)}(\mu )}{\ext _{r(t)}(\mu )}\\ \nonumber
	&\geq &\frac{1}{2}\log \sup _{\mu \in \mathcal {MF}(X)^*-Z}\liminf _{t\rightarrow \infty }\frac{e^{-2t}\ext _{r'(t)}(\mu )}{e^{-2t}\ext _{r(t)}(\mu )}\\ \nonumber
	&=&\frac{1}{2}\log \max _{j=1,\cdots ,k}\frac{m'_j}{m_j}. \nonumber
	\end{eqnarray}
This inequality comes from the fact that the limit of the supremum is greater than or equal to the supremum of the limit.
Since the symmetry of the distance, the inequality (\ref{liminf}) holds.

\begin{rem}
If $f^{-1}_*(H(q))$, $f'^{-1}_*(H(q'))$ are not absolutely continuous, then
	\begin{center}
	$\displaystyle \liminf _{t\rightarrow \infty }d_{T(X)}(r(t),r'(t))=+\infty $.
	\end{center}
\end{rem}

\begin{proof}[Proof of Theorem \ref{main}]
Since the upper and lower estimates, we obtain the desired equation.
\end{proof}

\begin{cor} \label{cor}
For any two Jenkins-Strebel rays $r$, $r'$, they are asymptotic if and only if $r$, $r'$ are modularly equivalent and $r(\infty )=r'(\infty )$.
\end{cor}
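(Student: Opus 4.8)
The plan is to prove the two implications separately, taking Theorem \ref{main}, the lower estimate (\ref{liminf}), and its accompanying remarks as the main inputs.

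For the sufficiency, suppose $r$, $r'$ are modularly equivalent, say $m'_j=\lambda m_j$ for all $j$ with some $\lambda>0$, and $r(\infty)=r'(\infty)$; interchanging $r$ and $r'$ if necessary, I may assume $\lambda\le 1$. I would set $\alpha=-\tfrac{1}{2}\log\lambda\ge 0$ and consider the shifted ray $\tilde r'(t)=r'(t+\alpha)$, whose initial point $r'(\alpha)$ lies in $T(X)$. Since the Teichm\"{u}ller flow preserves the Jenkins-Strebel property, the topological type of the foliation, and the end point, $\tilde r'$ is again a Jenkins-Strebel ray with $\tilde r'(\infty)=r'(\infty)=r(\infty)$, and its moduli are $\tilde m'_j=e^{2\alpha}m'_j=m_j$. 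Applying Theorem \ref{main} to $r$ and $\tilde r'$, whose moduli now agree, yields $\lim_{t\to\infty}d_{T(X)}(r(t),\tilde r'(t))=\tfrac12\log 1=0$, which is exactly the assertion that $r$, $r'$ are asymptotic.

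For the necessity, assume $r$, $r'$ are asymptotic and fix $\alpha$ with $d_{T(X)}(r(t),r'(t+\alpha))\to 0$; write $\tilde r'=r'(\,\cdot\,+\alpha)$, so $\tilde r'(\infty)=r'(\infty)$ and $\tilde m'_j=e^{2\alpha}m'_j$. First I would establish topological equivalence: if the foliations were not topologically equivalent then, being Jenkins-Strebel, they would fail to be absolutely continuous, and the remark following (\ref{liminf}) would give $\liminf_t d_{T(X)}(r(t),\tilde r'(t))=+\infty$ (topological type is unaffected by the shift), contradicting asymptoticity. With topological equivalence in hand, the lower estimate---which requires neither the Jenkins-Strebel condition nor equal end points---applies to $r$, $\tilde r'$ and gives $0=\liminf_t d_{T(X)}(r(t),\tilde r'(t))\ge\tfrac12\log\max_j\{\tilde m'_j/m_j,\,m_j/\tilde m'_j\}$. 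As each ratio or its reciprocal is at least $1$, this forces $\tilde m'_j=m_j$ for every $j$, hence $m'_j=e^{-2\alpha}m_j$ and $r$, $r'$ are modularly equivalent.

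It remains to deduce $r(\infty)=r'(\infty)$, and this is the step I expect to be hardest. Now $r$ and $\tilde r'$ pinch the same curves with equal moduli at every time, yet their end points could a priori differ through the gluing (twist and critical-point) data that survives in the limit surfaces $Y_\infty$, $\tilde Y'_\infty$. From $d_{T(X)}(r(t),\tilde r'(t))\to 0$ I would extract quasiconformal maps $Y_t\to\tilde Y'_t$, in the homotopy class of $(\tilde g'_t\circ f')\circ(g_t\circ f)^{-1}$, with maximal dilatation tending to $1$, and then pass to the limit: after deleting shrinking neighborhoods of the forming nodes, these almost-conformal maps should subconverge to a marking-preserving biholomorphism $Y_\infty\to\tilde Y'_\infty$, placing $\tilde r'(t)$ eventually inside every neighborhood $U_{V,\varepsilon}$ of $r(\infty)$ and hence giving $r(\infty)=\tilde r'(\infty)=r'(\infty)$. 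Controlling these maps uniformly on compacta away from the nodes and matching the gluing data in the limit is the technical heart of the argument, and is where I would concentrate the effort.
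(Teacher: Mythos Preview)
Your sufficiency argument and your derivation of modular equivalence from asymptoticity match the paper's proof essentially step for step: set $\alpha=-\tfrac12\log\lambda$, apply Theorem~\ref{main} to the shifted pair; for the converse, rule out non-absolute-continuity via the remark after (\ref{liminf}) and then use (\ref{liminf}) itself to force $\tilde m'_j=m_j$.

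Where you diverge from the paper is in the final step, showing $r(\infty)=r'(\infty)$. You propose to let the nearly conformal maps $Y_t\to\tilde Y'_t$ subconverge, on complements of shrinking node-neighborhoods, to a marking-preserving biholomorphism $Y_\infty\to\tilde Y'_\infty$. This is plausible, but the compactness and boundary-matching needed to make it precise are genuinely delicate, and the paper avoids them entirely. Given $\varepsilon>0$ and a compact neighborhood $V$ of the nodes of $Y_\infty$, the paper takes the $(1+\varepsilon)$-quasiconformal map $\alpha_t:Y'_t\to Y_t$ supplied by $d_{T(X)}(r(t),r'(t))<\tfrac12\log(1+\varepsilon)$ and simply \emph{post-composes} it with an explicit collapse $Y_t\to Y_\infty$: in the annular coordinates of \S\ref{end} this map is the identity outside $V$ and a radial pinch $re^{i\theta}\mapsto h_{j,t}(r)e^{i\theta}$ inside. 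The composite $h:Y'_t\to Y_\infty$ is then, by construction, a deformation that is $(1+\varepsilon)$-quasiconformal on $h^{-1}(Y_\infty\setminus V)$ and in the correct homotopy class, so $r'(t)\in U_{V,\varepsilon}(r(\infty))$ for all large $t$; since $V$ and $\varepsilon$ are arbitrary, $r'(t)\to r(\infty)$ and hence $r(\infty)=r'(\infty)$. The upshot is that you already have the right raw material $\alpha_t$; rather than taking a limit of it, route it through $Y_t$ and use the explicit collapsing map already built in \S\ref{end}, and the neighborhood condition falls out directly with no compactness argument at all.
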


\begin{proof}
Under the assumption of Theorem \ref{main}, if in addition the given rays $r$, $r'$ are modularly equivalent, then for $\alpha =-\frac{1}{2}\log \lambda $,
	\begin{center}
	$\displaystyle \lim _{t\rightarrow \infty }d_{T(X)}(r(t),r'(t+\alpha ))=\frac{1}{2}\log \max _{j=1\cdots ,k}\left\{ \frac{e^{2\alpha }m'_j}{m_j},\frac{m_j}{e^{2\alpha }m'_j}\right\}=\frac{1}{2}\log 1=0$.
	\end{center}
This means that the rays $r$, $r'$ are asymptotic.
Conversely, if two Jenkins-Strebel rays $r$, $r'$ are asymptotic, we can set
	\begin{center}
$\displaystyle \lim _{t\rightarrow \infty }d_{T(X)}(r(t),r'(t))=0$
	\end{center}
without loss of generality.
From the above remark, $f^{-1}_*(H(q))$, $f'^{-1}_*(H(q'))$ are absolutely continuous.
By the inequality (\ref{liminf}), $m_j=m'_j$ for any $j=1,\cdots ,k$.
Therefore, the rays $r$, $r'$ are modularly equivalent.
Next, we show that the correspondence between the end points $r(\infty )$, $r'(\infty )$.
The proof is similar as Proposition \ref{convergence} (\cite{HerSch07}).
For any $\varepsilon >0$ and any compact neighborhood $V$ of the set of nodes in $Y(\infty )$, we recall the neighborhood $U_{V,\varepsilon }(r(\infty ))$ of $r(\infty )$:
	\begin{center}
	$U_{V,\varepsilon }(r(\infty ))=\{ [S,g]\in \hat T(X)$\ $|$\ there is a deformation $h:S\rightarrow Y_{\infty }$ which is $(1+\varepsilon )$-quasiconformal on $h^{-1}(Y_{\infty }-V)$ such that $g_{\infty }\circ f\sim h\circ g\}$.
	\end{center}
We set $V=V_1\cup \cdots \cup V_k$ where $V_j=V_j^1\cup V_j^2\cup \{ pt\}$, $V_j^l=\{ 0<|z|\leq \iota _j\} \subset A_j^l(\infty )$, $0<\iota _j<1$ for any $j=1,\cdots ,k$ and $l=1,2$.
There exists $T>0$ such that for any $t>T$ and any $j=1,\cdots ,k$, $\delta _j(t)<\iota _j$ and $d_{T(X)}(r(t),r'(t))<\frac{1}{2}\log (1+\varepsilon )$, i.e., 
there exists a $(1+\varepsilon )$-quasiconformal mapping $\alpha _t:Y'_t\rightarrow Y_t$ such that $g_t\circ f\sim \alpha _t\circ g'_t\circ f'$.
For any $t>T$, we want to show that $r'(t)\in U_{V,\varepsilon }(r(\infty ))$.
We construct the deformation $h:Y'_t\rightarrow Y_{\infty }$ as follows.
For any $j=1,\cdots ,k$ and $l=1,2$,
\begin{eqnarray}
h|_{\alpha _t^{-1}(A_j^l(t))}(w)=\left\{
\begin{array}{ll}
\alpha _t(w)&(w\in \alpha _t^{-1}(\{ \iota _j\leq |z|<1\}))\\ \nonumber
h_{j,t}(|\alpha _t(w)|)e^{i\arg \alpha _t(w)}&(w\in \alpha _t^{-1}(\{ \delta _j(t)<|z|<\iota _j \}))\\ \nonumber
pt&(w\in \alpha _t^{-1}(\{ |z|=\delta _j(t)\})) \nonumber
\end{array}
\right.
\end{eqnarray}
where $h_{j,t}:(\delta _j(t),\iota _j)\rightarrow (0,\iota _j)$ is an arbitrary monotonously increasing diffeomorphism.
The mapping $h|_{\alpha _t^{-1}(A_j^l(t))}$ is equal to the $(1+\varepsilon )$-quasiconformal mapping $\alpha _t$ on ${h|_{\alpha _t^{-1}(A_j^l(t))}}^{-1}(A_j^l(\infty )-V_j^l)$.
By $h\sim g_{\infty }\circ g_t^{-1}\circ \alpha _t$, we obtain $h\circ g'_t\circ f'\sim g_{\infty }\circ f$.
This means that $r'(t)\in U_{V,\varepsilon }(r(\infty ))$ for any $t>T$.
Since $\varepsilon $ and $V$ are arbitrary, we conclude that $r(\infty )=r'(\infty )$.
\end{proof}

\section{The detour metric}
In this section, we obtain the minimum value of the equation of Theorem \ref{main} when we shift the initial points of the given two rays.
This value is represented by the detour metric between the end points of the rays on the Gardiner-Masur boundary of $T(X)$.

\subsection{The horofunction boundary of metric spaces and the detour metric}
We recall the definition of the detour metric in the case of the general metric space, and refer the reader to \cite{Walsh11} for more details.
First, we consider the horofunction compactification of a metric space.
This is given by Gromov \cite{Gromov81}.
Let $(X,d)$ be a metric space, $b$ be a base point of $X$.
The distance $d$ on $X$ is called {\it proper} if any closed ball with respect to $d$ is compact, {\it geodesic} if for any two points in $X$, there exists a geodesic which joins them.
Suppose that the distance $d$ has the two properties which are proper and geodesic.
It is well known that any Teichm\"{u}ller distance satisfies these conditions.
We denote by $C(X)$ the set of all continuous functions of $X$ into $\mathbb R$ which is equipped with the topology of the uniform convergence on any compact set of $X$.
We define a mapping $\psi:X\rightarrow C(X)$ by	$z\mapsto \{ \psi _z(x):=d(x,z)-d(b,z)\} _{x\in X}$.

\begin{thm}{\rm (\cite{Gromov81})}
The mapping $\psi$ is an embedding and the set $\psi(X)$ is relatively compact on $C(X)$.
\end{thm}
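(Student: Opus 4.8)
The plan is to prove Gromov's theorem in two parts, as the statement has two assertions: first that $\psi$ is an embedding, and second that $\psi(X)$ is relatively compact in $C(X)$. The key observation underlying everything is that the functions $\psi_z$ are uniformly Lipschitz and uniformly bounded on compact sets, which is exactly the hypothesis of the Arzel\`a--Ascoli theorem. I would begin by recording the elementary estimate that for each fixed $z$, the function $\psi_z$ satisfies $|\psi_z(x)-\psi_z(y)|=|d(x,z)-d(y,z)|\leq d(x,y)$ by the triangle inequality, so every $\psi_z$ is $1$-Lipschitz; moreover $\psi_z(b)=0$, so $|\psi_z(x)|\leq d(b,x)$, giving a uniform bound on each compact set independent of $z$.

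For relative compactness, I would invoke the Arzel\`a--Ascoli theorem directly. The family $\{\psi_z\}_{z\in X}$ is equicontinuous (all members are $1$-Lipschitz) and pointwise bounded (by the estimate $|\psi_z(x)|\leq d(b,x)$), so on every compact subset $K\subset X$ the restricted family has compact closure in the uniform topology on $K$. Since the topology on $C(X)$ is precisely that of uniform convergence on compact sets, an exhaustion of $X$ by compact sets together with a diagonal argument shows that $\psi(X)$ is relatively compact in $C(X)$. Here properness of the metric ensures that closed balls are compact, so such an exhaustion exists and the compact sets are well-behaved.

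For the embedding claim, I would show that $\psi$ is injective and a homeomorphism onto its image. Injectivity follows by evaluating: if $\psi_{z}=\psi_{w}$ then in particular $\psi_z(z)=\psi_w(z)$, i.e.\ $-d(b,z)=d(z,w)-d(b,w)$, and symmetrically $\psi_w(w)=\psi_z(w)$ gives $-d(b,w)=d(z,w)-d(b,z)$; adding these forces $d(z,w)=0$, so $z=w$. Continuity of $\psi$ is immediate from the $1$-Lipschitz estimate above. That $\psi$ is open onto its image (equivalently, that $\psi^{-1}$ is continuous) can be seen by noting that convergence $\psi_{z_n}\to\psi_z$ uniformly on compacta implies pointwise convergence, and evaluating at well-chosen points recovers $d(z_n,z)\to 0$; properness is what guarantees the $z_n$ cannot escape to infinity while their associated functions converge.

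The main obstacle I anticipate is the embedding direction rather than the compactness direction: verifying that $\psi^{-1}$ is continuous requires care, since one must rule out the possibility that $z_n\to\infty$ in $X$ while $\psi_{z_n}$ still converges to some $\psi_z$ with $z\in X$. This is precisely where properness (compactness of closed balls) is essential, and I would make sure to use it explicitly rather than treating the homeomorphism claim as routine. The equicontinuity and boundedness estimates, by contrast, are purely formal consequences of the triangle inequality and should be dispatched quickly.
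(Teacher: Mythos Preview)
The paper does not prove this theorem; it is simply cited from Gromov and used as a black box, so there is no proof in the paper to compare your proposal against.

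That said, your sketch contains a genuine gap in the embedding direction. You attribute to properness alone the conclusion that the $z_n$ cannot escape to infinity while $\psi_{z_n}\to\psi_z$, but this is false: the geodesic hypothesis is essential here and you never invoke it. For a counterexample, take $X=\{b,z\}\cup\{z_n:n\geq 1\}$ with $d(b,z)=1$, $d(b,z_n)=n+1$, $d(z,z_n)=n$, and $d(z_n,z_m)=n+m$ for $n\neq m$. One checks directly that this is a proper metric space (all pairwise distances are at least $1$, so closed balls are finite), and that $\psi_{z_n}(x)=\psi_z(x)$ for every $x\in\{b,z,z_1,\dots,z_{n-1}\}$; hence $\psi_{z_n}\to\psi_z$ uniformly on compact (i.e.\ finite) sets, yet $d(z,z_n)=n\to\infty$. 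Thus $\psi$ fails to be a homeomorphism onto its image in this proper but non-geodesic space.

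The correct argument combines properness with geodesics. If $d(b,z_n)\to\infty$ then also $d(z,z_n)\to\infty$; choose $w_n$ on a geodesic from $z$ to $z_n$ with $d(z,w_n)=1$, so $d(w_n,z_n)=d(z,z_n)-1$. By properness a subsequence $w_{n_k}\to w$ with $d(z,w)=1$. Since the $\psi_{z_n}$ are $1$-Lipschitz and converge uniformly on compacta to $\psi_z$, one has $\psi_{z_{n_k}}(w_{n_k})\to\psi_z(w)=1-d(b,z)$. But directly $\psi_{z_{n_k}}(w_{n_k})=d(z,z_{n_k})-1-d(b,z_{n_k})=\psi_{z_{n_k}}(z)-1\to -d(b,z)-1$, a contradiction. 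Your Arzel\`a--Ascoli argument for relative compactness and your injectivity argument are correct and standard.
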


By this theorem, the space $X$ is identified with $\psi(X)$.
The closure $\overline{\psi (X)}$ is called the {\it horofunction compactification} of $X$.
The boundary $X(\infty )=\overline{\psi (X)}-\psi (X)$ is called the {\it horofunction boundary} of $X$.
We call $\xi \in X(\infty )$ a {\it horofunction}.
We can denote by $X\cup X(\infty )$ the horofunction compactification of $X$.

\begin{rem}
The topological space $X\cup X(\infty )$ satisfies the first countability axiom.
\end{rem}

\begin{defi}
For any $\xi ,\xi '\in X(\infty )$, we define the {\it detour cost}
	\begin{center}
	$\displaystyle H(\xi ,\xi ')=\sup _{W\ni \xi }\inf _{x\in W\cap X}(d(b,x)+\xi '(x))$,
	\end{center}
where $W$ ranges over all neighborhoods of $\xi$ in $X\cup X(\infty )$.
\end{defi}

There is another definition of the detour cost.
\begin{defi}
For any $\xi ,\xi '\in X(\infty )$,
	\begin{center}
	$\displaystyle H(\xi ,\xi ')=\inf _{\gamma }\liminf _{t\rightarrow \infty }(d(b,\gamma (t))+\xi '(\gamma (t)))$,
	\end{center}
where $\gamma $ ranges over all paths $\gamma :\mathbb R_{\geq 0}\rightarrow X$ which converge to $\xi $.
\end{defi}

\begin{defi}
Let $A\subset [0,+\infty )$ be an unbounded set which contains $0$.
A mapping $r:A\rightarrow X$ is called a {\it almost geodesic} on $X$ if for any $\varepsilon >0$, there exists $T\geq 0$ such that for any $s,t\in A$ with $T\leq s\leq t$,
	\begin{center}
	$|d(r(0),r(s))+d(r(s),r(t))-t|<\varepsilon $.
	\end{center}
\end{defi}

Any geodesic is an almost geodesic.
Rieffel proved that any almost geodesic on $X$ converges to a point on $X(\infty )$ (\cite{Rieffel02}).
Let $X_B(\infty )\subset X(\infty )$ be the set of end points of all almost geodesics.
Any $\xi \in X_B(\infty )$ is called a {\it Busemann point}.

\begin{prop}{\rm (\cite{Walsh11})}
For any $\xi ,\xi ',\xi ''\in X(\infty )$ and $x\in X$, the detour cost $H$ satisfies the following properties:
	\begin{enumerate}
	\item $H(\xi ,\xi )=0$ if and only if $\xi \in X_B(\infty )$,
	\item $H(\xi ,\xi ')\geq 0$,
	\item $H(\xi ,\xi '')\leq H(\xi ,\xi ')+H(\xi ',\xi '')$.
	\end{enumerate}
\end{prop}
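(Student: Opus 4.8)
The plan is to prove the three properties separately, using the two (equivalent, cf. \cite{Walsh11}) descriptions of the detour cost given above. Throughout I would use two elementary facts about horofunctions: first, that a boundary point is a limit of functions $\psi_z$, so that for any horofunction $\zeta$ and any path $\gamma '\to \zeta$ one has $\zeta (x)=\lim _s(d(x,\gamma '(s))-d(b,\gamma '(s)))$ for every $x\in X$; and second, that every horofunction is $1$-Lipschitz, being a pointwise limit of the $1$-Lipschitz functions $\psi _z$. Property (2) is then immediate: the triangle inequality $d(x,z)\geq d(b,z)-d(b,x)$ gives $\psi _z(x)\geq -d(b,x)$, hence $\xi '(x)\geq -d(b,x)$, i.e. $d(b,x)+\xi '(x)\geq 0$ for all $x$; thus every inner infimum in the sup-inf definition is nonnegative, and so is $H(\xi ,\xi ')$.

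For the triangle inequality (3) I would fix $\varepsilon >0$, choose a path $\gamma \to \xi $ with $\liminf _t(d(b,\gamma (t))+\xi '(\gamma (t)))\leq H(\xi ,\xi ')+\varepsilon $, and choose a path $\gamma '\to \xi '$ with $\liminf _s(d(b,\gamma '(s))+\xi ''(\gamma '(s)))\leq H(\xi ',\xi '')+\varepsilon $; put $y_m=\gamma '(s_m)$ along times $s_m\to \infty $ realizing this last liminf, so that $y_m\to \xi '$. For each fixed $t$, the $1$-Lipschitz bound gives $\xi ''(\gamma (t))\leq d(\gamma (t),y_m)+\xi ''(y_m)$, while $d(\gamma (t),y_m)-d(b,y_m)\to \xi '(\gamma (t))$ as $m\to \infty $. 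Combining these,
\[ d(b,\gamma (t))+\xi ''(\gamma (t))\leq [d(b,\gamma (t))+\xi '(\gamma (t))]+[d(b,y_m)+\xi ''(y_m)]+o_m(1). \]
Letting $m\to \infty $ yields $d(b,\gamma (t))+\xi ''(\gamma (t))\leq [d(b,\gamma (t))+\xi '(\gamma (t))]+H(\xi ',\xi '')+\varepsilon $; taking $\liminf _t$ and noting that $\gamma \to \xi $ is an admissible competitor in the definition of $H(\xi ,\xi '')$ gives $H(\xi ,\xi '')\leq H(\xi ,\xi ')+H(\xi ',\xi '')+2\varepsilon $, and $\varepsilon \to 0$ finishes this part.

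For Property (1), the direction from Busemann point to $H(\xi ,\xi )=0$ I would obtain by feeding an almost geodesic $r\to \xi $ with $r(0)=b$ into the path definition. The almost-geodesic inequality, applied with the larger parameter $s\to \infty $, forces $d(b,r(t))=t+O(\varepsilon )$ and $d(r(t),r(s))-d(b,r(s))=-d(b,r(t))+O(\varepsilon )$, so $\xi (r(t))=-d(b,r(t))+O(\varepsilon )$ and hence $d(b,r(t))+\xi (r(t))=O(\varepsilon )$; together with Property (2) this gives $H(\xi ,\xi )=0$. Conversely, assuming $H(\xi ,\xi )=0$, the path definition supplies a sequence $x_n\to \xi $ with $d(b,x_n)+\xi (x_n)\to 0$; since $\xi $ is a boundary point and $d$ is proper, $x_n$ leaves every compact set and $d(b,x_n)\to \infty $. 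From $\psi _{x_n}\to \xi $ we get, for each fixed $m$, $d(x_m,x_n)-d(b,x_n)\to \xi (x_m)$ as $n\to \infty $, whence $d(b,x_m)+d(x_m,x_n)-d(b,x_n)\to d(b,x_m)+\xi (x_m)\to 0$. I would then extract, by a diagonal argument, a subsequence $x_{n_k}$ along which $d(b,x_{n_k})$ increases strictly to $+\infty $ and $d(b,x_{n_j})+d(x_{n_j},x_{n_k})-d(b,x_{n_k})$ is arbitrarily small for all $j<k$ with $j$ large; setting $A=\{0\}\cup \{d(b,x_{n_k})\}_k$, $r(0)=b$, and $r(d(b,x_{n_k}))=x_{n_k}$ produces an almost geodesic converging to $\xi $, so $\xi \in X_B(\infty )$.

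I expect the main obstacle to be precisely this last direction: converting the asymptotic alignment $d(b,x_m)+d(x_m,x_n)-d(b,x_n)\to 0$ into a genuine almost geodesic requires an inductive choice of indices so that the almost-geodesic inequality holds simultaneously for all pairs $s\leq t$ beyond a threshold, and it relies on properness to ensure that the resulting parameter set $A$ is unbounded. The remaining properties (2) and (3) are comparatively direct once the $1$-Lipschitz bound and the limit formula for horofunctions are in hand.
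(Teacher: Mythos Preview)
The paper does not supply a proof of this proposition; it is quoted from \cite{Walsh11} and used as a black box, so there is nothing in the paper to compare your argument against.

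That said, your sketch is essentially the standard argument and is correct. Two small points are worth tightening. First, in the ``only if'' direction of (1) you assume an almost geodesic with $r(0)=b$, but the definition of Busemann point only hands you some almost geodesic $r$ with $r(0)=p$. The fix is short: from the almost-geodesic inequality one gets $\xi (r(s))-\xi (p)=\lim _t[d(r(s),r(t))-d(p,r(t))]=-d(p,r(s))+O(\varepsilon )$, while $d(b,r(s))-d(p,r(s))=-\psi _{r(s)}(p)\to -\xi (p)$ since $r(s)\to \xi $; adding these gives $d(b,r(s))+\xi (r(s))\to 0$ without any hypothesis on $r(0)$. Second, in the ``if'' direction the diagonal extraction you describe is indeed the only nontrivial step: set $\epsilon _m=d(b,x_m)+\xi (x_m)\geq 0$, note that for each fixed $m$ one has $d(b,x_m)+d(x_m,x_n)-d(b,x_n)\to \epsilon _m$ as $n\to \infty $, and choose $n_1<n_2<\cdots $ inductively so that $\epsilon _{n_k}<2^{-k}$ and $d(b,x_{n_j})+d(x_{n_j},x_{n_k})-d(b,x_{n_k})<\epsilon _{n_j}+2^{-k}$ for all $j<k$. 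With $A=\{0\}\cup \{d(b,x_{n_k})\}$, $r(0)=b$, $r(d(b,x_{n_k}))=x_{n_k}$, the almost-geodesic estimate follows, and properness guarantees $A$ is unbounded. Your identification of this as the main obstacle is accurate.
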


By this proposition, for any $\xi ,\xi '\in X_B(\infty )$, the symmetrization $H(\xi ,\xi ')+H(\xi ',\xi )$ satisfies the axiom of the distance.

\begin{defi}
For any $\xi ,\xi '\in X_B(\infty )$,
	\begin{center}
	$\displaystyle \delta (\xi ,\xi ')=H(\xi ,\xi ')+H(\xi ',\xi )$
	\end{center}
is a (possibly $+\infty $-valued) distance on $X_B(\infty )$.
This $\delta $ is called the {\it detour metric} for $(X,d)$ and the base point $b\in X$.
\end{defi}

\subsection{The Gardiner-Masur boundary of Teichm\"{u}ller spaces}
The Gardiner-Masur compactification and the Gardiner-Masur boundary of the Teichm\"{u}ller space is induced by Gardiner and Masur \cite{GarMas91}.
Liu and Su \cite{LiuSu12} show that the horofunction compactification of the Teichm\"{u}ller space with the Teichm\"{u}ller distance is the same as the Gardiner-Masur compactification of the same one.
Let $T(X)$ be a Teichm\"{u}ller space of $X$.
We define a mapping $\tilde{\phi }:T(X)\rightarrow \mathbb R^{\mathcal S}_{\geq 0}$ by $p\mapsto \{ \ext_{p}^{\frac{1}{2}}(\gamma )\} _{\gamma \in \mathcal S}$,
and denote by $\pi :\mathbb R^{\mathcal S}_{\geq 0}-\{ 0\}\rightarrow P\mathbb R^{\mathcal S}_{\geq 0}$ a natural projection.

\begin{thm}{\rm \cite{GarMas91}}
The composition $\phi =\pi \circ \tilde{\phi }:T(X)\rightarrow {\rm P}\mathbb R^{\mathcal S}_{\geq 0}$ is an embedding and the closure $\overline{\phi (T(X))}$ is a compact set.
\end{thm}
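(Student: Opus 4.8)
The plan is to establish the three properties in turn: continuity of $\phi$, injectivity (which together with continuity of the inverse yields the embedding), and compactness of the closure. Throughout I fix a base point $b=[X,\mathrm{id}]\in T(X)$, and I use repeatedly that although $\tilde\phi$ only records the extremal lengths of curves in $\mathcal S$, these determine $\ext_p(\mu)$ for every $\mu\in\mathcal{MF}(X)$: since $\mathbb R_{\geq 0}\times\mathcal S$ is dense in $\mathcal{MF}(X)$ (Thurston) and $\ext_p$ is continuous and homogeneous of degree $2$, the scale-invariant ratio $\ext_{p_2}(\mu)/\ext_{p_1}(\mu)$ has the same supremum over $\mathcal S$ as over $\mathcal{MF}(X)^*$. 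Hence Kerckhoff's formula can be read entirely off the coordinates $\{\ext_p^{1/2}(\gamma)\}_{\gamma\in\mathcal S}$.

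First I would show that $\phi$ is a continuous injection. Continuity of $\tilde\phi$ is immediate, since each coordinate $p\mapsto\ext_p^{1/2}(\gamma)$ is continuous on $T(X)$ and the product (hence projective) topology is generated by these coordinates; composing with $\pi$ preserves continuity. For injectivity, suppose $\phi(p)=\phi(q)$. Then $\tilde\phi(p)$ and $\tilde\phi(q)$ are proportional, so there is $c>0$ with $\ext_q(\gamma)=c^2\ext_p(\gamma)$ for every $\gamma\in\mathcal S$. Kerckhoff's formula then gives $d_{T(X)}(p,q)=\tfrac12\log c^2$ and, reversing the roles, $d_{T(X)}(q,p)=\tfrac12\log c^{-2}$. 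Symmetry and non-negativity of the Teichm\"uller distance force $c=1$ and $d_{T(X)}(p,q)=0$, i.e. $p=q$.

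For compactness I would normalize the coordinates. Put $\mathcal E_p(\gamma)=e^{-d_{T(X)}(b,p)}\ext_p^{1/2}(\gamma)$, which represents the same projective class $\phi(p)$. By Kerckhoff's formula $\ext_p(\gamma)\leq e^{2d_{T(X)}(b,p)}\ext_b(\gamma)$, so $0\leq\mathcal E_p(\gamma)\leq\ext_b^{1/2}(\gamma)$ for every $\gamma$. Thus the family $\{\mathcal E_p\}_{p\in T(X)}$ lies in the product $\prod_{\gamma\in\mathcal S}[0,\ext_b^{1/2}(\gamma)]$, which is compact by Tychonoff's theorem; its closure is therefore compact, and projecting under $\pi$ gives compactness of $\overline{\phi(T(X))}$, provided every limit remains a genuine point of $P\mathbb R^{\mathcal S}_{\geq 0}$.

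The main obstacle is precisely that last proviso, and the same difficulty underlies continuity of $\phi^{-1}$. Because the product topology is that of pointwise convergence, the normalization constraint $\sup_\gamma\mathcal E_p(\gamma)/\ext_b^{1/2}(\gamma)=1$ may fail to be inherited by a limit: the near-maximal curves could wander off through $\mathcal S$, leaving a pointwise limit that is identically $0$ and hence not a point of $P\mathbb R^{\mathcal S}_{\geq 0}$. To rule this out I would fix a finite filling system $\gamma_1,\dots,\gamma_N\in\mathcal S$ and use the comparison between extremal length and intersection numbers (for instance $i(\gamma,\beta)^2\leq\ext_p(\gamma)\,\ext_p(\beta)$) to show that the finitely many values $\ext_p(\gamma_i)$ control the whole vector $\{\ext_p(\gamma)\}_\gamma$ up to multiplicative constants independent of $p$; this keeps a fixed coordinate uniformly bounded below after normalization and prevents degeneration. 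The very same uniform comparison lets one recover $d_{T(X)}$ from the normalized coordinates and conclude that convergence of $\phi(p_n)$ in $P\mathbb R^{\mathcal S}_{\geq 0}$ forces $p_n\to p$ in $T(X)$, giving continuity of the inverse. Establishing this $p$-independent control of an infinite coordinate vector by a fixed finite filling family is the delicate analytic core of the theorem.
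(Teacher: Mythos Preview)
The paper does not actually prove this statement: it is cited from \cite{GarMas91} (the Gardiner--Masur paper) and stated without proof, exactly as are the other background results attributed to Kerckhoff, Walsh, Liu--Su, Miyachi, etc. So there is no ``paper's own proof'' to compare your proposal against.

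That said, your sketch is a reasonable outline of the standard argument. The injectivity via Kerckhoff's formula is clean, and your normalization $\mathcal E_p(\gamma)=e^{-d_{T(X)}(b,p)}\ext_p^{1/2}(\gamma)$ together with Tychonoff is exactly the right mechanism for compactness. You correctly identify the real issue: preventing pointwise degeneration to the zero vector (equivalently, showing $\phi^{-1}$ is continuous). Your proposed fix via a finite filling system and the inequality $i(\gamma,\beta)^2\leq \ext_p(\gamma)\,\ext_p(\beta)$ is along the right lines, but as written it is only a gesture: you assert that the finitely many values $\ext_p(\gamma_i)$ control the whole vector up to $p$-independent multiplicative constants, without indicating how the inequality actually delivers this. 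In particular, the filling condition gives lower bounds on $\ext_p(\gamma_i)$ in terms of intersection numbers, but one still needs an argument that some \emph{fixed} coordinate stays bounded below after the normalization by $e^{-d_{T(X)}(b,p)}$; this is where the work lies, and your proposal does not close that gap.
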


This closure is called the {\it Gardiner-Masur compactification} of $T(X)$ and we denote by $\overline{T(X)}^{GM}=\overline{\phi (T(X))}$.
The boundary $\partial _{GM}(T(X))=\overline{\phi (T(X))}-\phi (T(X))$ is called the {\it Gardiner-Masur boundary} of $T(X)$.

We set the base point $b=[X,id]\in T(X)$.
For any $p\in T(X)$ and any $\mu \in \mathcal {MF}(X)$, we define
	\begin{center}
	$\displaystyle \mathcal E_p(\mu )=\left\{ \frac{\ext _p(\mu )}{K_p}\right\} ^{\frac{1}{2}}$,
	\end{center}
where $K_{p}=e^{2d_T(b,p)}$.
By the definition of the Gardiner-Masur compactification, the family $\{ \mathcal E_p(\gamma )\} _{\gamma \in \mathcal S}$ corresponds to $p\in T(X)$.

\begin{prop}{\rm \cite{Miyachi08}}
For any $\xi \in \partial _{GM}T(X)$, there exists a continuous function $\mathcal E_{\xi }:\mathcal {MF}(X)\rightarrow \mathbb R_{\geq 0}$ which satisfies the following properties:
	\begin{enumerate}
	\item $\mathcal E_{\xi }(t\mu )=t\mathcal E_{\xi }(\mu )$ for any $t\geq 0$ and any $\mu \in \mathcal {MF}(X)$,
	\item $\{ \mathcal E_{\xi }(\gamma )\} _{\gamma \in \mathcal S}$ corresponds to $\xi \in \partial _{GM}T(X)$,
	\item If a sequence $\{x_n\}\subset T(X)$ converges to $\xi \in \partial _{GM}T(X)$, then there exists a subsequence $\{x_{n_j}\}\subset \{x_n\}$ and $t_0>0$ which does not depend on $\mathcal {MF}(X)$ such that $\mathcal E_{x_{n_j}}$ converges to $t_0\mathcal E_{\xi }$ uniformly on any compact set of $\mathcal {MF}(X)$.
	\end{enumerate}
\end{prop}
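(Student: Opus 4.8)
The plan is to realize $\mathcal E_\xi$ as a subsequential limit of the normalized extremal-length functions $\mathcal E_p$, $p\in T(X)$, and to read off its three properties from an Arzel\`a--Ascoli argument on compact subsets of $\mathcal {MF}(X)$. I would first record two consequences of Kerckhoff's formula. Since $\ext_p(\mu)/\ext_b(\mu)\le \sup_{\nu}\ext_p(\nu)/\ext_b(\nu)=e^{2d_T(b,p)}=K_p$, we obtain the uniform bound $\mathcal E_p(\mu)\le \ext_b^{1/2}(\mu)$, valid for every $p$ and every $\mu\in\mathcal {MF}(X)$. Taking instead the supremum over $\mu$ gives the normalization identity $\sup_{\mu\in\mathcal {MF}(X)^*}\mathcal E_p(\mu)^2/\ext_b(\mu)=1$ for all $p$; this is precisely the role of dividing by $K_p$, and it is what keeps the family from degenerating to the zero function in the limit.

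Next I would show that $\{\mathcal E_p\}_{p\in T(X)}$ is equicontinuous on each compact subset of $\mathcal {MF}(X)$. By the $1$-homogeneity $\mathcal E_p(t\mu)=t\mathcal E_p(\mu)$ it suffices to estimate the oscillation of $\ext_p^{1/2}$ on a fixed compact slice, the point being to bound its modulus of continuity by a quantity independent of $p$. I would obtain such a bound from standard extremal-length estimates---the Cauchy--Schwarz-type inequality $i(\mu,\nu)\le \ext_p^{1/2}(\mu)\,\ext_p^{1/2}(\nu)$ together with Kerckhoff's dual expression $\ext_p(\mu)=\sup_\nu i(\mu,\nu)^2/\ext_p(\nu)$---every occurring quantity being dominated by $\ext_b$ through the bound of the previous step. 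With uniform boundedness and equicontinuity in hand, Arzel\`a--Ascoli applies.

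Now fix $\xi\in\partial_{GM}T(X)$ and any sequence $x_n\to\xi$. Along a subsequence, $\mathcal E_{x_{n_j}}$ converges uniformly on compacta to a continuous limit $G$. Homogeneity passes to the limit, giving property (1), and the normalization identity forces $G\not\equiv 0$. On $\mathcal S$ the family $\{\mathcal E_p(\gamma)\}_\gamma$ is a positive rescaling of $\tilde\phi(p)$, hence has projective class $\phi(p)$; since $\phi(x_{n_j})\to\xi$, the restriction $G|_{\mathcal S}$ represents $\xi$. Fixing any one such representative as $\mathcal E_\xi$, we may write $G=t_0\mathcal E_\xi$ with $t_0>0$, which gives properties (2) and (3) for this sequence. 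Finally, any two subsequential limits are continuous, $1$-homogeneous, and represent the same projective point $\xi$, hence are positive scalar multiples of one another; thus $\mathcal E_\xi$ is well defined up to scale and property (3) holds for every sequence converging to $\xi$, with $t_0$ depending on the sequence. The main obstacle I anticipate is the equicontinuity step: the uniform upper bound alone does not control oscillation, and one must produce a modulus of continuity for $\ext_p^{1/2}$ that is independent of $p$ in order to upgrade the merely projective, pointwise-on-$\mathcal S$ convergence supplied by the Gardiner--Masur theorem into uniform convergence of the functions on compact subsets of all of $\mathcal {MF}(X)$.
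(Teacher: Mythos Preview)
The paper does not prove this proposition; it is quoted from \cite{Miyachi08} and stated without proof. So there is no ``paper's own proof'' to compare against. Your outline is, in broad strokes, the approach Miyachi takes: obtain the uniform bound $\mathcal E_p(\mu)\le \ext_b^{1/2}(\mu)$ and the normalization $\sup_\mu \mathcal E_p(\mu)/\ext_b^{1/2}(\mu)=1$ from Kerckhoff's formula, then apply Arzel\`a--Ascoli on compacta of $\mathcal{MF}(X)$ and read off (1)--(3) from the limit.

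The gap you yourself flag is real and is the entire content of the result. The ingredients you list---Minsky's inequality $i(\mu,\nu)\le \ext_p^{1/2}(\mu)\ext_p^{1/2}(\nu)$ and the dual formula $\ext_p(\mu)=\sup_\nu i(\mu,\nu)^2/\ext_p(\nu)$---do not by themselves yield a $p$-independent modulus of continuity for $\mathcal E_p$: the dual formula gives you a supremum over $\nu$, and controlling $|\ext_p^{1/2}(\mu)-\ext_p^{1/2}(\mu')|$ through it requires a \emph{lower} bound on $\ext_p(\nu)$ for the near-optimal $\nu$, which is exactly what degenerates as $p\to\partial_{GM}T(X)$. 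Miyachi's actual argument goes through the Hubbard--Masur correspondence: he parametrizes $\mathcal{MF}(X)$ by quadratic differentials at the base point $b$, and proves a quantitative continuity estimate for $\ext_p$ in terms of the $L^1$-norm on $Q(b)$ that is uniform in $p$ after division by $K_p$. That is the missing idea; once you have it, the rest of your outline goes through verbatim.
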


For any $p\in \overline{T(X)}^{GM}$, we define
	\begin{center}
	$\displaystyle Q(p)=\sup _{\nu \in \mathcal {MF}(X)^*}\frac{\mathcal E_p(\nu )}{\ext _b^\frac{1}{2}(\nu )}$
	\end{center}
and for any $\mu \in \mathcal {MF}(X)$,
	\begin{center}
	$\displaystyle \mathcal L_p(\mu )=\frac{\mathcal E_p(\mu )}{Q(p)}$.
	\end{center}

\begin{prop}{\rm (\cite{LiuSu12})} \label{LiuSu}
For any $\{p_n\}\subset \overline{T(X)}^{GM}$ and any $p\in \overline{T(X)}^{GM}$, $p_n$ converges to $p$ as $n\rightarrow \infty $ if and only if $\mathcal L_{p_n}$ converges to $\mathcal L_p$ uniformly on any compact set of $\mathcal {MF}(X)$ as $n\rightarrow \infty $.
\end{prop}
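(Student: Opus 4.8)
The plan is to treat the assignment $p\mapsto\mathcal L_p$ as a continuous choice of a canonical representative of each projective class in $\overline{T(X)}^{GM}$, and thereby reduce the projective convergence $p_n\to p$ to genuine convergence of these representatives. The first observation is that, directly from the definition of $Q(p)$, one has $\mathcal L_p(\nu)\le \ext_b^{\frac12}(\nu)$ for every $\nu\in\mathcal{MF}(X)$ together with $\sup_{\nu}\mathcal L_p(\nu)/\ext_b^{\frac12}(\nu)=1$. Hence each $\mathcal L_p$ is a continuous, degree-one homogeneous function whose restriction $\{\mathcal L_p(\gamma)\}_{\gamma\in\mathcal S}$ is a \emph{nonzero} representative of the point $\phi(p)\in \mathrm P\mathbb R_{\ge0}^{\mathcal S}$, since $\{\mathcal E_p(\gamma)\}_\gamma$ is a positive scalar multiple of $\{\ext_p^{\frac12}(\gamma)\}_\gamma$ (interior case) or represents $p$ by property (2) of Miyachi's proposition (\cite{Miyachi08}) (boundary case).

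For the easy implication, suppose $\mathcal L_{p_n}\to\mathcal L_p$ uniformly on compact sets. Then in particular $\mathcal L_{p_n}(\gamma)\to\mathcal L_p(\gamma)$ for every $\gamma\in\mathcal S$, i.e. the representatives converge coordinate-wise in $\mathbb R_{\ge0}^{\mathcal S}$ to the nonzero limit $\{\mathcal L_p(\gamma)\}_\gamma$. Coordinate-wise convergence to a nonzero point descends to convergence in the projective quotient, so $\phi(p_n)\to\phi(p)$, that is, $p_n\to p$ in $\overline{T(X)}^{GM}$.

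For the converse I would argue by subsequences: it suffices to show that every subsequence of $\{p_n\}$ admits a further subsequence along which $\mathcal L_{p_n}\to\mathcal L_p$ uniformly on compacts. Along such a subsequence, Miyachi's proposition (for $p\in\partial_{GM}T(X)$), respectively the continuity of extremal length on $\mathcal{MF}(X)$ (for $p\in T(X)$), furnishes a scalar $t_0>0$ with $\mathcal E_{p_{n_j}}\to t_0\,\mathcal E_p$ uniformly on every compact subset of $\mathcal{MF}(X)$. The sphere $\Sigma=\{\nu\in\mathcal{MF}(X):\ext_b(\nu)=1\}$ is compact, and by homogeneity of both $\mathcal E_p$ and $\ext_b^{\frac12}$ one has $Q(p)=\sup_{\nu\in\Sigma}\mathcal E_p(\nu)$; uniform convergence on $\Sigma$ then gives $Q(p_{n_j})\to t_0\,Q(p)$. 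Dividing, the scalar $t_0$ cancels, so $\mathcal L_{p_{n_j}}=\mathcal E_{p_{n_j}}/Q(p_{n_j})\to \mathcal E_p/Q(p)=\mathcal L_p$ uniformly on compacts. Since the limit does not depend on the chosen subsequence, the whole sequence converges and the proof is complete.

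The crux of the argument is the uniform-on-compacts convergence $\mathcal E_{p_{n_j}}\to t_0\,\mathcal E_p$, which is where the analytic content sits. For interior sequences this is supplied verbatim by Miyachi's proposition, but the genuine obstacle is that the $p_n$ may themselves be boundary points, so property (3) must be extended to sequences in $\overline{T(X)}^{GM}$. I would handle this by a normal-families argument: the uniform bound $\mathcal L_{p_n}\le \ext_b^{\frac12}$ together with an equicontinuity estimate for $\ext^{\frac12}$ that is uniform in $p$ (inherited from the standard modulus of continuity of extremal length) lets Arzel\`a--Ascoli produce subsequential uniform limits on compacts, whose values are pinned down on the dense set $\mathbb R_{\ge0}\times\mathcal S\subset\mathcal{MF}(X)$ by the projective limit $\phi(p)$, and hence everywhere by homogeneity and continuity. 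Establishing this $p$-uniform equicontinuity, and using the compactness of $\Sigma$ to pass the supremum defining $Q$ to the limit, are the two steps I expect to demand the most care.
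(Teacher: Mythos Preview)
The paper does not supply a proof of this proposition: it is quoted verbatim from Liu--Su \cite{LiuSu12} and used as a black box, so there is no ``paper's own proof'' to compare against. What you have written is a plausible reconstruction of the Liu--Su argument rather than a comparison target.

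On the substance of your outline: the easy implication and the subsequence reduction are fine, and the cancellation of the scalar $t_0$ via $Q(p_{n_j})\to t_0\,Q(p)$ is exactly the reason $\mathcal L$ is introduced. You have also correctly located the only real issue, namely that Miyachi's property~(3) is stated for sequences in $T(X)$, not in $\overline{T(X)}^{GM}$. Your proposed Arzel\`a--Ascoli fix is the right shape, but the ``$p$-uniform equicontinuity of $\ext^{1/2}$'' you invoke is not something you get for free: for interior $p$ the only a~priori control is $K_p^{-1}\ext_b(\mu)\le \ext_p(\mu)\le K_p\,\ext_b(\mu)$, which degenerates as $p$ approaches the boundary, and for boundary $p$ the function $\mathcal E_p$ is defined only as an abstract continuous limit. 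A cleaner way to close this gap is to approximate each boundary point $p_n$ by an interior point $q_n\in T(X)$ with $q_n\to p$ in $\overline{T(X)}^{GM}$ and $\mathcal E_{q_n}$ uniformly close to a scalar multiple of $\mathcal E_{p_n}$ on the compact at hand (this is how $\mathcal E_{p_n}$ arises in Miyachi's construction), and then apply property~(3) to the interior sequence $\{q_n\}$; the normalisation by $Q$ again absorbs the scalars. That diagonal-approximation step is where the genuine work lies, and it is what Liu--Su carry out.
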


For any $p\in \overline{T(X)}^{GM}$, we define a function $\psi _p:T(X)\rightarrow \mathbb R$ by
	\begin{center}
	$\displaystyle \psi _p(x)=\log \sup _{\mu \in \mathcal{MF}(X)^*}\frac{\mathcal L_p(\mu )}{\ext _x^\frac{1}{2}(\mu )}$
	\end{center}
for any $x\in T(X)$.

\begin{rem}
By the definition and Kerckhoff's formula, if $p\in T(X)$, then
	\begin{eqnarray}
	\psi _p(x)&=&\log \sup _{\mu \in \mathcal{MF}(X)^*}\frac{\mathcal E_p(\mu )}{\ext _x^\frac{1}{2}(\mu )}-\log \sup _{\mu \in \mathcal{MF}(X)^*}\frac{\mathcal E_p(\mu )}{\ext _b^\frac{1}{2}(\mu )} \nonumber \\
&=&\log \sup _{\mu \in \mathcal{MF}(X)^*}\frac{\ext _p^\frac{1}{2}(\mu )}{\ext _x^\frac{1}{2}(\mu )}-\log \sup _{\mu \in \mathcal{MF}(X)^*}\frac{\ext _p^\frac{1}{2}(\mu )}{\ext _b^\frac{1}{2}(\mu )} \nonumber \\
&=&d_{T(X)}(x,p)-d_{T(X)}(b,p) \nonumber
	\end{eqnarray}
for any $x\in X$.
Thus, we can consider the horofunction compactification of $T(X)$ by the function $\psi _p$ for any $p\in T(X)$. 
\end{rem}

The following is deduced from Proposition \ref{LiuSu}.

\begin{thm}{\rm (\cite{LiuSu12})}
We define a mapping $\psi:\overline{T(X)}^{GM}\rightarrow C(T(X))$ by $p\mapsto \psi _p$.
Then $\psi$ is injective and continuous.
In particular, $\overline{T(X)}^{GM}$ and $\psi(\overline{T(X)}^{GM})$ are homeomorphic.
Furthermore, $\psi(\overline{T(X)}^{GM})=\overline{\psi(T(X))}$.
\end{thm}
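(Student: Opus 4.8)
The plan is to factor $\psi$ through the assignment $p\mapsto\mathcal L_p$, whose topological behaviour is already pinned down by Proposition \ref{LiuSu}, and then to reduce every assertion to an elementary statement about the transform $\mathcal L\mapsto\big[x\mapsto\log\sup_\mu\mathcal L(\mu)/\ext_x^{1/2}(\mu)\big]$. Note first that $\psi_p(x)$ depends on $p$ only through the function $\mathcal L_p$, so $\psi=\Phi\circ L$, where $L(p)=\mathcal L_p$ and $\Phi$ sends a function $\mathcal L$ on $\mathcal{MF}(X)$ to the function $x\mapsto\log\sup_\mu\mathcal L(\mu)/\ext_x^{1/2}(\mu)$. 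By Proposition \ref{LiuSu}, $L$ is a homeomorphism of $\overline{T(X)}^{GM}$ onto its image, so it suffices to prove that $\Phi$ is continuous and injective on that image.

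For continuity I would use that the ratio $\mathcal L(\mu)/\ext_x^{1/2}(\mu)$ is invariant under $\mu\mapsto t\mu$, so the supremum may be taken over the fixed compact section $K=\{\mu:\ext_b(\mu)=1\}$ of $\mathcal{MF}(X)^*$. On $C\times K$, for $C\subset T(X)$ compact, the extremal length $\ext_x^{1/2}(\mu)$ is continuous and bounded away from $0$ and $\infty$; hence uniform convergence $\mathcal L_{p_n}\to\mathcal L_p$ on $K$ forces uniform convergence of the suprema, and so of $\psi_{p_n}\to\psi_p$, on $C$. This step is routine once the reduction to a compact parameter set is in place.

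The substantive point is injectivity of $\Phi$, which I would obtain from the recovery formula
\[
\mathcal L_p(\mu)=\inf_{x\in T(X)}e^{\psi_p(x)}\ext_x^{1/2}(\mu),\qquad\mu\in\mathcal{MF}(X)^*.
\]
The inequality $\le$ is immediate from the definition of $\psi_p$ as a supremum. For $\ge$ I would choose $x_n\in T(X)$ with $x_n\to p$ in the Gardiner--Masur topology; by the proposition of Miyachi \cite{Miyachi08} a subsequence satisfies $\mathcal E_{x_n}\to t_0\mathcal E_p$ uniformly on compacta for some $t_0>0$. Using $\ext_x^{1/2}=K_x^{1/2}\mathcal E_x$ for interior points together with the fact that $\mathcal L_p/\mathcal E_p\equiv 1/Q(p)$ is \emph{constant}, the expression $e^{\psi_p(x_n)}\ext_{x_n}^{1/2}(\mu)$ collapses to $\big(\sup_\nu\mathcal L_p(\nu)/\mathcal E_{x_n}(\nu)\big)\,\mathcal E_{x_n}(\mu)$, which tends to $\tfrac{1}{t_0Q(p)}\cdot t_0\mathcal E_p(\mu)=\mathcal L_p(\mu)$. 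For interior $p$ the formula is even more transparent: the preceding remark identifies $\psi_p(x)=d_{T(X)}(x,p)-d_{T(X)}(b,p)$, so the infimum is attained at $x=p$ with value $\mathcal E_p(\mu)=\mathcal L_p(\mu)$. Granting the formula, $\psi_p$ determines $\mathcal L_p$, and then injectivity of $L$ gives $p=p'$ whenever $\psi_p=\psi_{p'}$.

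The hard part will be justifying the passage of $\sup_\nu\mathcal L_p(\nu)/\mathcal E_{x_n}(\nu)$ to $1/(t_0Q(p))$: although the constancy of $\mathcal L_p/\mathcal E_p$ makes the pointwise limit clear, interchanging the supremum with the limit is delicate near the zero set of $\mathcal E_p$, where $\mathcal E_{x_n}$ degenerates as well, and it is precisely here that the uniform-on-compacta convergence of Miyachi's proposition must be used. Once injectivity and continuity are secured, the remaining claims are formal: a continuous injection of the compact space $\overline{T(X)}^{GM}$ into the Hausdorff space $C(T(X))$ is a homeomorphism onto its image; and since that image is compact, hence closed, and contains $\psi(T(X))$ while every point of $\partial_{GM}T(X)$ is a Gardiner--Masur limit of points of $T(X)$, continuity of $\psi$ yields $\psi(\overline{T(X)}^{GM})=\overline{\psi(T(X))}$.
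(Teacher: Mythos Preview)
The paper does not give a proof of this theorem: it is quoted from \cite{LiuSu12}, preceded only by the remark ``The following is deduced from Proposition \ref{LiuSu}.'' So there is no argument in the paper to compare against; one can only assess your proposal on its own merits and against what the hint suggests.

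Your factoring $\psi=\Phi\circ L$ with $L(p)=\mathcal L_p$ is exactly in the spirit of that remark, and your continuity argument (reduce the supremum to the compact section $K=\{\mu:\ext_b(\mu)=1\}$ and use uniform convergence of $\mathcal L_{p_n}$ together with continuity and positivity of $\ext_x^{1/2}$ on $C\times K$) is sound. The closing paragraph---continuous injection from a compact space to a Hausdorff space is a homeomorphism onto its image, and density of $T(X)$ in $\overline{T(X)}^{GM}$ plus compactness gives $\psi(\overline{T(X)}^{GM})=\overline{\psi(T(X))}$---is correct once injectivity is known.

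The weak point is the injectivity step, and it is a real gap rather than a technicality. You want
\[
\sup_{\nu\in K}\frac{\mathcal E_p(\nu)}{\mathcal E_{x_n}(\nu)}\longrightarrow \frac{1}{t_0},
\]
but uniform convergence $\mathcal E_{x_n}\to t_0\mathcal E_p$ on $K$ does not give this when $\mathcal E_p$ has zeros on $K$ (which it typically does for boundary $p$). Writing $\mathcal E_{x_n}(\nu)=t_0\mathcal E_p(\nu)+r_n(\nu)$ with $\sup_K|r_n|\to 0$, the ratio $\mathcal E_p(\nu)/\mathcal E_{x_n}(\nu)$ is uncontrolled along sequences $\nu_n\to\nu^\ast$ with $\mathcal E_p(\nu^\ast)=0$: both numerator and denominator tend to $0$, and nothing prevents the quotient from exceeding $1/t_0$ by a fixed amount. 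Thus your assertion that ``it is precisely here that the uniform-on-compacta convergence of Miyachi's proposition must be used'' is optimistic; uniform convergence alone is not enough, and some additional input (for instance, a more careful choice of approximating sequence, or an a~priori lower bound on $\mathcal E_{x_n}$ tied to $\mathcal E_p$ away from its zero set) is required. As your sketch stands, the recovery formula $\mathcal L_p(\mu)=\inf_x e^{\psi_p(x)}\ext_x^{1/2}(\mu)$ is verified for interior $p$ but not for boundary $p$, so injectivity is not established.
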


Therefore, the horofunction compactification $\overline{\psi(T(X))}$ of $T(X)$ can be identified with the Gardiner-Masur compactification $\overline{T(X)}^{GM}$.
We denote by $T(X)\cup T(X)(\infty )$ the horofunction compactification of $T(X)$.
Then, we can assume that $T(X)(\infty )=\partial _{GM}T(X)$.

\subsection{The detour metric for the Teichm\"{u}ller distance}
We consider the detour metric in the case of the Teichm\"{u}ller space with the Teichm\"{u}ller distance.
Its representation is given by Walsh \cite{Walsh12}.
We notice that there exist non-Busemann points on $\partial _{GM}T(X)$ if $3g-3+n>1$.
This result is proved by Miyachi \cite{Miyachi11}.

\begin{thm}{\rm (\cite{Walsh12})} \label{Walsh3}
For any point $p$ on $T(X)$ and any Busemann point $\xi $, there exists a unique Teichm\"{u}ller geodesic ray on $T(X)$ starting at $p$ and converging to $\xi $. 
\end{thm}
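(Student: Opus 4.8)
The plan is to translate convergence to $\xi$ into a statement about the functions $\mathcal{E}_{(\cdot)}$ and then to realize and rigidify the ray through its horizontal quadratic differential. First I would use the identification of the horofunction and Gardiner--Masur boundaries together with Proposition~\ref{LiuSu}: a sequence tends to a boundary point precisely when the associated $\mathcal{L}_{(\cdot)}$ converge uniformly on compacta of $\mathcal{MF}(X)$. Since every Teichm\"{u}ller geodesic ray is a geodesic, hence an almost geodesic, Rieffel's theorem (\cite{Rieffel02}) guarantees that it converges to a Busemann point; and by Theorem~\ref{Walsh1} the asymptotics $e^{-2t}\ext_{r(t)}(\mu)\to\mathcal{E}_r(\mu)^2$, together with the normalizations $K_{r(t)}=e^{2d_{T(X)}(b,r(t))}$ entering $\mathcal{E}_{r(t)}$ and $\mathcal{L}_{r(t)}$, show that the limit point $\xi_r=\lim_t r(t)$ is determined by $\mathcal{E}_r$ up to a positive scalar. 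Thus $r\mapsto\xi_r$ maps the Teichm\"{u}ller rays from $p$ into the Busemann points, and the theorem amounts to proving this map is a bijection onto them. Here I would invoke the characterization (from \cite{Walsh12}) that a Busemann point $\xi$ is exactly one whose function has the canonical shape $\mathcal{E}_\xi(\mu)^2=\sum_{j=1}^k m_j\, i(G_j,\mu)^2$, with the $G_j$ projectively distinct, pairwise non-intersecting simple closed curves or ergodic measures and $m_j>0$ --- the very shape produced by a ray in Theorem~\ref{Walsh1}.

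For existence, given such a $\xi$ and $p=[Y,f]$, I would transport the system $\{G_j\}$ to $Y$ via $f_*$ and seek a unit-norm quadratic differential $q$ on $Y$ whose horizontal foliation is supported on $\{f_*(G_j)\}$ and whose moduli $m_j^{(r)}=b_j/i(f_*(G_j),V(q))$ are proportional to the prescribed $m_j$. The existence of a quadratic differential with foliation supported on a given system is the Hubbard--Masur correspondence (\cite{HubMas79}); prescribing the projective class of moduli (rather than of heights) is handled by the Jenkins--Strebel extremal-length theory in the simple-closed-curve case and, for ergodic components, by continuity of the modulus map. Such a $q$ yields a ray $r$ from $p$ with $\mathcal{E}_r\propto\mathcal{E}_\xi$, whence $\mathcal{L}_{r(t)}\to\mathcal{L}_\xi$ and $r(t)\to\xi$ by Proposition~\ref{LiuSu}. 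Alternatively one may argue purely metrically: as $\xi$ is Busemann it is the limit of an almost geodesic $\gamma$, and a subsequential Arzel\`{a}--Ascoli limit of the geodesic segments $[p,\gamma(t_n)]$, available by properness of $d_{T(X)}$, is a geodesic ray from $p$ that the Busemann property forces to converge to $\xi$.

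For uniqueness, suppose two Teichm\"{u}ller rays $r_1,r_2$ from $p$ both converge to $\xi$. Then $\mathcal{E}_{r_1}\propto\mathcal{E}_{r_2}\propto\mathcal{E}_\xi$, and comparing the canonical forms forces the two rays to share the same supporting system $\{G_j\}$ and to have proportional moduli. Since a ray from $p$ is determined by its unit-norm quadratic differential on $Y$, i.e. by the projective class of its moduli, injectivity of the passage from projective heights to projective moduli yields $r_1=r_2$.

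The hard part will be the analytic correspondence between quadratic differentials on $Y$ supported on $\{f_*(G_j)\}$ and their projective moduli vectors: surjectivity of this correspondence underlies existence and injectivity underlies uniqueness. Injectivity follows from the uniqueness half of Strebel's theorem, while surjectivity --- or, in the metric route, the verification that the limiting geodesic ray lands exactly at $\xi$ and not at some other Busemann point --- is where the Busemann hypothesis must be combined with a continuity and properness argument and with the normalizations $Q(p)$, $\mathcal{L}_p$ of Proposition~\ref{LiuSu}.
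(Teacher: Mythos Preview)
The paper does not supply its own proof of Theorem~\ref{Walsh3}: the statement is quoted as a result of Walsh \cite{Walsh12} and is used as a black box in the subsequent arguments (specifically in the proof of the proposition following Theorem~\ref{Walsh5}). There is therefore no proof in the paper against which to compare your proposal.

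That said, your sketch is a reasonable outline of the argument one finds in \cite{Walsh12}, and it correctly identifies the two ingredients that do the work: the description of Busemann points via functions of the shape $\mu\mapsto\sum_j m_j\,i(G_j,\mu)^2$, and the bijection between unit-norm quadratic differentials on $Y$ with horizontal foliation supported on $\{f_*(G_j)\}$ and projective modulus vectors. One caution: in the existence step you appeal to ``continuity of the modulus map'' for ergodic components, but this is exactly the delicate point --- the analogue of Strebel's theorem for minimal components requires a separate argument (Walsh handles it via the extremal-length embedding and the structure of the horofunction boundary rather than by a direct differential-geometric construction). Your alternative metric route via Arzel\`{a}--Ascoli limits of segments $[p,\gamma(t_n)]$ is closer to how such statements are usually proved in the general horofunction framework, and would need the standard check that a limiting ray from $p$ actually lands at $\xi$ rather than at another boundary point; this is where the Busemann hypothesis (as opposed to a general horofunction) is genuinely used.
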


By this theorem, we can assume that the set of Busemann points are consists of end points of all Teichm\"{u}ller geodesic rays.

\begin{thm}{\rm (\cite{Walsh12})} \label{Walsh4}
Let $r$, $r'$ be Teichm\"{u}ller geodesic rays on $T(X)$ converging to Busemann points $\xi $, $\xi '$ respectively.
Then, the rays $r$, $r'$ are modularly equivalent if and only if $\xi =\xi '$.
\end{thm}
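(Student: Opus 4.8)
The plan is to translate the statement entirely into the language of the boundary functions $\mathcal E_\xi$ and of the growth rates $\mathcal E_r$ attached to rays, and then to read off modular equivalence from Theorem \ref{Walsh2}. By Theorem \ref{Walsh3} every Busemann point is the endpoint of a unique Teichm\"uller geodesic ray, so it suffices to work with rays $r$, $r'$ and their decompositions $H(q)=\sum_j b_j f_*(G_j)$, $H(q')=\sum_j b'_j f'_*(G_j)$ together with the moduli $m_j$, $m'_j$. The first task is to identify the endpoint $\xi$ of $r$ with the growth rate $\mathcal E_r$ of Theorem \ref{Walsh1}.

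First I would write $\mathcal E_{r(t)}(\mu)^2=\ext_{r(t)}(\mu)/K_{r(t)}$ and factor it as $\mathcal E_{r(t)}(\mu)^2=\big(e^{-2t}\ext_{r(t)}(\mu)\big)\cdot\big(e^{2t}/K_{r(t)}\big)$. By Theorem \ref{Walsh1} the first factor tends to $\mathcal E_r(\mu)^2$ uniformly on compact sets of $\mathcal{MF}(X)$. Since $r(t)\to\xi$, Miyachi's proposition and Proposition \ref{LiuSu} furnish a constant $t_0>0$ and a sequence $t_n\to\infty$ with $\mathcal E_{r(t_n)}\to t_0\mathcal E_\xi$ uniformly on compacta. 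Comparing the two expressions, the remaining factor $e^{2t_n}/K_{r(t_n)}$ is forced to converge to a positive constant $c$ (it is independent of $\mu$ while the two other limits exist), whence $\mathcal E_\xi$ is a positive multiple of $\mathcal E_r$ on $\{\mathcal E_r\neq 0\}$, and then everywhere by continuity and homogeneity. Thus $\mathcal E_\xi\propto\mathcal E_r$, and likewise $\mathcal E_{\xi'}\propto\mathcal E_{r'}$.

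With this in hand, by Proposition \ref{LiuSu} together with $\mathcal L_\xi=\mathcal E_\xi/Q(\xi)$ one has $\xi=\xi'$ if and only if $\mathcal E_r$ and $\mathcal E_{r'}$ are proportional, so the equivalence reduces to a statement about these two functions. For the direction $(\Leftarrow)$, if $m'_j=\lambda m_j$ for all $j$, then using $\mathcal E_r(\mu)^2=\sum_j m_j i(G_j,\mu)^2$ and the analogous formula for $r'$ one computes directly $\mathcal E_{r'}(\mu)^2=\lambda\,\mathcal E_r(\mu)^2$, so $\xi=\xi'$. For $(\Rightarrow)$, suppose $\mathcal E_{r'}^2=\lambda\,\mathcal E_r^2$. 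Then both $\sup_\mu \mathcal E_{r'}(\mu)^2/\mathcal E_r(\mu)^2$ and $\sup_\mu \mathcal E_r(\mu)^2/\mathcal E_{r'}(\mu)^2$ are finite, so by Theorem \ref{Walsh2} and the remark following it the foliations are absolutely continuous (in particular modular equivalence is meaningful) and the two suprema equal $\max_j m'_j/m_j=\lambda$ and $\max_j m_j/m'_j=1/\lambda$ respectively. The first gives $m'_j/m_j\le\lambda$ for all $j$ and the second gives $m'_j/m_j\ge\lambda$ for all $j$, hence $m'_j=\lambda m_j$ for every $j$, i.e. $r$, $r'$ are modularly equivalent.

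The main obstacle I expect is the first step: justifying that the normalizing factor $e^{2t}/K_{r(t)}=e^{2(t-d_{T(X)}(b,r(t)))}$ converges to a positive constant, equivalently that $\mathcal E_\xi$ is genuinely \emph{proportional} to the growth rate $\mathcal E_r$ rather than merely comparable to it. This is where the interplay between the Teichm\"uller-distance normalization hidden in $K_p$ and the geodesic parameter $t$ must be controlled, using the uniform-convergence statements of Miyachi's proposition in tandem with Theorem \ref{Walsh1}. Once the boundary function of a ray is pinned to $\mathcal E_r$ up to scale, the remaining equivalence is purely formal via Theorem \ref{Walsh2}.
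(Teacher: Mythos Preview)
The paper does not give a proof of Theorem \ref{Walsh4}; it is quoted as a result of Walsh \cite{Walsh12} and used as a black box. So there is no ``paper's own proof'' to compare against.

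That said, your reconstruction is a coherent argument built entirely from the other Walsh results cited here, and the strategy is sound. Two remarks. First, Theorem \ref{Walsh1} as stated in the paper asserts only pointwise convergence $e^{-2t}\ext_{r(t)}(\mu)\to\mathcal E_r(\mu)^2$, not uniform convergence on compacta; your argument in Step~1 only needs the pointwise statement (fix a single $\mu$ with $\mathcal E_r(\mu)>0$ to extract the limit of $e^{2t}/K_{r(t)}$, then feed that constant back in for all other $\mu$), so this is a cosmetic overstatement rather than a gap. Second, in the $(\Rightarrow)$ direction you invoke Theorem \ref{Walsh2} and its remark to conclude absolute continuity from finiteness of both suprema; strictly speaking the remark goes the other way, but applying Theorem \ref{Walsh2} twice (once to $r,r'$ and once to $r',r$) does force the supports of the two decompositions to coincide, which is what you need. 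Your self-identified obstacle---pinning $\mathcal E_\xi$ to a scalar multiple of $\mathcal E_r$---is handled exactly as you outline: the scalar $e^{2t_n}/K_{r(t_n)}$ is forced to converge because it equals a ratio of two convergent quantities at any $\mu$ with $\mathcal E_r(\mu)\neq 0$.
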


\begin{thm}{\rm (\cite{Walsh12})} \label{Walsh5}
Let $\xi $, $\xi '$ be Busemann points and $r$, $r'$ be Teichm\"{u}ller geodesic rays on $T(X)$ starting at $b=[X,id]$ and converging to $\xi $, $\xi '$ respectively.
We denote by $q$, $q'$ the unit norm holomorphic quadratic differentials on $X$ corresponding to the given rays $r$, $r'$ respectively.
If the measured foliations $H(q)$, $H(q')\in \mathcal {MF}(X)$ are absolutely continuous, then we can write $H(q)=\sum _{j=1}^kb_jG_j$, $H(q')=\sum _{j=1}^kb'_jG_j$ where $b_j$, $b'_j>0$ and $G_j$ is a simple closed curve or an ergodic measure for any $j=1,\cdots ,k$ such that these are projectively-distinct and pairwise having zero intersection number.
We set $m_j=\frac{b_j}{i(G_j,V(q))}$, $m'_j=\frac{b'_j}{i(G_j,V(q'))}$ for any $j=1,\cdots ,k$.
Then the detour metric $\delta $ between $\xi$ and $\xi '$ is represented by
	\begin{center}
	$\displaystyle \delta (\xi ,\xi ')=\frac{1}{2}\log \max _{j=1,\cdots ,k}\frac{m'_j}{m_j}+\frac{1}{2}\log \max _{j=1,\cdots ,k}\frac{m_j}{m'_j}$.
	\end{center}
If $H(q)$, $H(q')$ are not absolutely continuous, then $\displaystyle \delta (\xi ,\xi ')=+\infty $.
\end{thm}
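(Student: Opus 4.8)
The plan is to compute the two one-sided detour costs $H(\xi,\xi')$ and $H(\xi',\xi)$ separately and then symmetrize. I first record a normalization that will make the Liu--Su rescaling invisible. Since both rays issue from the base point $b=[X,id]$, Kerckhoff's formula gives, for every $p\in T(X)$, $Q(p)=e^{-d_{T(X)}(b,p)}\sup_{\nu}\ext_p^{1/2}(\nu)/\ext_b^{1/2}(\nu)=e^{-d_{T(X)}(b,p)}e^{d_{T(X)}(b,p)}=1$, so $\mathcal L_{r(t)}=\mathcal E_{r(t)}$ along $r$. Because $r$ converges to $\xi$ in the Gardiner--Masur (= horofunction) compactification, Proposition \ref{LiuSu} yields $\mathcal L_{r(t)}\to\mathcal L_\xi$ uniformly on compact sets; combined with $\mathcal E_{r(t)}(\mu)^2=e^{-2t}\ext_{r(t)}(\mu)\to\mathcal E_r(\mu)^2$ from Theorem \ref{Walsh1}, this gives $\mathcal L_\xi=\mathcal E_r$, and likewise $\mathcal L_{\xi'}=\mathcal E_{r'}$.

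Next I would reduce the detour cost to a computation along the geodesic ray. By Theorem \ref{Walsh3} the unique Teichm\"{u}ller geodesic ray from $b$ to the Busemann point $\xi$ is $r$ itself, and $r$ is a geodesic, hence an almost-geodesic converging to $\xi$; since $\xi$ is a Busemann point, the path-infimum in the definition of the detour cost is realized along such an almost-geodesic. Using the Liu--Su representation $\psi_{\xi'}(x)=\log\sup_{\mu}\mathcal L_{\xi'}(\mu)/\ext_x^{1/2}(\mu)$ together with $d_{T(X)}(b,r(t))=t$, this gives
\[
H(\xi,\xi')=\lim_{t\to\infty}\bigl(d_{T(X)}(b,r(t))+\psi_{\xi'}(r(t))\bigr)=\lim_{t\to\infty}\Bigl(t+\log \sup_{\mu\in\mathcal{MF}(X)^*}\frac{\mathcal L_{\xi'}(\mu)}{\ext_{r(t)}^{1/2}(\mu)}\Bigr).
\]

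Then I would insert the extremal-length asymptotics. Theorem \ref{Walsh1} gives $\ext_{r(t)}^{1/2}(\mu)\sim e^{t}\mathcal E_r(\mu)=e^{t}\mathcal L_\xi(\mu)$, so the factor $e^{t}$ produces a $-t$ cancelling the leading $t$ and leaves
\[
H(\xi,\xi')=\log \sup_{\mu\in\mathcal{MF}(X)^*}\frac{\mathcal L_{\xi'}(\mu)}{\mathcal L_\xi(\mu)}=\log \sup_{\mu\in\mathcal{MF}(X)^*}\frac{\mathcal E_{r'}(\mu)}{\mathcal E_r(\mu)}.
\]
Theorem \ref{Walsh2} now evaluates the squared supremum: in the absolutely continuous case the same $G_j$ occur in $H(q)$ and $H(q')$ with positive coefficients, so the degenerate sets $\{\mathcal E_r=0\}$ and $\{\mathcal E_{r'}=0\}$ coincide, the excluded set $Z$ is common, and $\sup_\mu \mathcal E_{r'}(\mu)^2/\mathcal E_r(\mu)^2=\max_j m'_j/m_j$, whence $H(\xi,\xi')=\tfrac12\log\max_j m'_j/m_j$. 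Interchanging the roles of the rays gives $H(\xi',\xi)=\tfrac12\log\max_j m_j/m'_j$, and $\delta(\xi,\xi')=H(\xi,\xi')+H(\xi',\xi)$ is the asserted formula. If $H(q),H(q')$ are not absolutely continuous, Theorem \ref{Walsh2} makes the supremum $+\infty$, so $H(\xi,\xi')=+\infty$ and hence $\delta(\xi,\xi')=+\infty$.

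The main obstacle is the interchange of $\lim_{t\to\infty}$ and $\sup_\mu$ used to pass from the first display to $\log\sup_\mu \mathcal L_{\xi'}(\mu)/\mathcal L_\xi(\mu)$: Theorem \ref{Walsh1} asserts only pointwise convergence of $e^{-2t}\ext_{r(t)}(\mu)$, whereas the supremum requires uniformity over the foliations involved. I would resolve this by homogeneity, restricting $\mu$ to a compact transversal to $P\mathcal{MF}(X)$ on which $\mathcal L_\xi$ and $\mathcal L_{\xi'}$ are continuous and $\mathcal L_\xi$ is bounded away from its (common) zero set, and then invoking the uniform convergence on compacta furnished by Proposition \ref{LiuSu} and property (3) of Miyachi's proposition. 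A secondary point to verify is the behaviour near the common zero set $Z$, where numerator and denominator both degenerate; there the absolute continuity hypothesis keeps the ratio bounded, so $Z$ contributes nothing to the supremum and the supremum over $\mathcal{MF}(X)^*$ agrees with the supremum over $\mathcal{MF}(X)^*-Z$ appearing in Theorem \ref{Walsh2}.
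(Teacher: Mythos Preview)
The paper does not prove Theorem \ref{Walsh5}: it is stated with the attribution \cite{Walsh12} and used as a black box, just like Theorems \ref{Walsh1}--\ref{Walsh4}. There is therefore no ``paper's own proof'' to compare against; the paper simply quotes Walsh's result and then applies it (together with Theorems \ref{Walsh3} and \ref{Walsh4}) to deduce the unnumbered proposition that follows, which extends the formula to rays not based at $b$.

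That said, your sketch is a plausible reconstruction of how such a formula is obtained and is in the spirit of Walsh's approach: identify the horofunction $\psi_{\xi'}$ via the Liu--Su normalization, evaluate the detour cost along the geodesic ray $r$ to the Busemann point $\xi$, and reduce to the ratio $\sup_\mu \mathcal E_{r'}(\mu)/\mathcal E_r(\mu)$ computed in Theorem \ref{Walsh2}. Two points deserve tightening. First, the claim that the infimum over paths in the definition of $H(\xi,\xi')$ is \emph{attained} along the geodesic ray $r$ is not immediate from the definitions quoted here; this is a property of Busemann points that you are using without justification, and it is exactly what distinguishes Busemann points from arbitrary horofunctions. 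Second, as you yourself note, the interchange of $\lim_{t\to\infty}$ with $\sup_\mu$ is the genuine analytic step; your proposed fix via homogeneity and compactness in $P\mathcal{MF}(X)$ is the right idea, but it requires care near the common zero set $Z$, and you should make explicit that the uniform convergence from Proposition \ref{LiuSu} applies to $\mathcal L_{r(t)}$ (a function on $\mathcal{MF}(X)$), while the quantity you need to control is $\ext_{r(t)}^{1/2}$ in the denominator of $\psi_{\xi'}(r(t))$.
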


We combine Theorems \ref{Walsh3}, \ref{Walsh4} and \ref{Walsh5} to obtain the following.

\begin{prop}
Let $r$, $r'$ be Teichm\"{u}ller geodesic rays on $T(X)$ starting at $p=[Y,f]$, $p'=[Y',f']$ and having the unit norm holomorphic quadratic differentials $q$, $q'$ on $Y$, $Y'$ and converging to Busemann points $\xi $, $\xi '$ respectively.
If the measured foliations $f^{-1}_*(H(q))$, $f'^{-1}_*(H(q'))\in \mathcal{MF}(X)$ are absolutely continuous, then we can write $f^{-1}_*(H(q))=\sum _{j=1}^kb_jG_j$, $f'^{-1}_*(H(q))=\sum _{j=1}^kb'_jG_j$ where $b_j$, $b'_j>0$, $G_j$ is a simple closed curve or an ergodic measure for any $j=1,\cdots ,k$ such that these are projectively-distinct and pairwise having zero intersection number.
We set $m_j=\frac{b_j}{i(f_*(G_j),V(q))}$, $m'_j=\frac{b'_j}{i(f'_*(G_j),V(q'))}$ for any $j=1,\cdots ,k$.
In this situation, the detour metric between $\xi $ and $\xi '$ is also represented by
	\begin{center}
	$\displaystyle \delta (\xi ,\xi ')=\frac{1}{2}\log \max _{j=1,\cdots ,k}\frac{m'_j}{m_j}+\frac{1}{2}\log \max _{j=1,\cdots ,k}\frac{m_j}{m'_j}$.
	\end{center}
If $f^{-1}_*(H(q))$, $f'^{-1}_*(H(q'))$ are not absolutely continuous, then $\displaystyle \delta (\xi ,\xi ')=+\infty $.
\end{prop}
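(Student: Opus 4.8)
The plan is to reduce the Proposition to Theorem \ref{Walsh5} by a change of basepoint, exploiting the fact that the detour metric is defined intrinsically on the Busemann boundary and does not depend on where the geodesic rays start. Theorem \ref{Walsh5} computes $\delta(\xi,\xi')$ in terms of the moduli $m_j$, $m'_j$ when the rays emanate from the canonical basepoint $b=[X,\mathrm{id}]$; here the rays instead start at arbitrary points $p=[Y,f]$ and $p'=[Y',f']$, so the first task is to pass from these rays to rays based at $b$ converging to the same boundary points $\xi$, $\xi'$.

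First I would invoke Theorem \ref{Walsh3}: since $\xi$ and $\xi'$ are Busemann points, there exist unique Teichm\"{u}ller geodesic rays $\tilde r$, $\tilde r'$ starting at $b$ and converging to $\xi$, $\xi'$ respectively. Let $\tilde q$, $\tilde q'$ be the unit norm holomorphic quadratic differentials on $X$ determining $\tilde r$, $\tilde r'$. The key observation is that the detour metric $\delta(\xi,\xi')$ is a function of the endpoints alone, so applying Theorem \ref{Walsh5} to $\tilde r$, $\tilde r'$ gives
	\begin{center}
	$\displaystyle \delta (\xi ,\xi ')=\frac{1}{2}\log \max _{j=1,\cdots ,k}\frac{\tilde m'_j}{\tilde m_j}+\frac{1}{2}\log \max _{j=1,\cdots ,k}\frac{\tilde m_j}{\tilde m'_j}$,
	\end{center}
where $\tilde m_j$, $\tilde m'_j$ are the moduli associated to $H(\tilde q)$, $H(\tilde q')$. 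The heart of the argument is then to show that these moduli, up to a common positive scalar in each of the two families, coincide with the moduli $m_j$, $m'_j$ attached to $f^{-1}_*(H(q))$, $f'^{-1}_*(H(q'))$. A common scalar is harmless: rescaling all $m_j$ by a factor $\lambda$ and all $m'_j$ by a factor $\lambda'$ leaves $\max_j(m'_j/m_j)\cdot\max_j(m_j/m'_j)$ unchanged, hence does not affect $\delta(\xi,\xi')$.

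The bridge between the two descriptions is Theorem \ref{Walsh4} together with Theorem \ref{Walsh1}. Since $r$ and $\tilde r$ both converge to the same Busemann point $\xi$, Theorem \ref{Walsh4} tells me they are modularly equivalent, which by definition means the moduli $m_j$ and $\tilde m_j$ agree up to a single common factor $\lambda>0$; likewise $m'_j$ and $\tilde m'_j$ agree up to a common factor. More conceptually, the boundary function $\mathcal E_\xi$ is, up to positive scalar, the horofunction limit $\mathcal E_r$ computed in Theorem \ref{Walsh1}, namely $\mathcal E_r(\mu)^2=\sum_j m_j\, i(G_j,\mu)^2$; since $\mathcal E_r$ and $\mathcal E_{\tilde r}$ are projectively equal (they determine the same $\xi$), the weighted sums $\sum_j m_j i(G_j,\mu)^2$ and $\sum_j \tilde m_j i(G_j,\mu)^2$ are proportional, forcing $\tilde m_j=\lambda m_j$ by the projective independence of the $i(G_j,\cdot)^2$. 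The same reasoning applies to the primed data. Substituting $\tilde m_j=\lambda m_j$, $\tilde m'_j=\lambda' m'_j$ into the expression from Theorem \ref{Walsh5} and cancelling $\lambda$, $\lambda'$ yields exactly the claimed formula in terms of $m_j$, $m'_j$. Finally, when $f^{-1}_*(H(q))$, $f'^{-1}_*(H(q'))$ fail to be absolutely continuous, the index sets of the ergodic components differ, hence $H(\tilde q)$, $H(\tilde q')$ are not absolutely continuous either, and the last clause of Theorem \ref{Walsh5} gives $\delta(\xi,\xi')=+\infty$.

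\emph{The main obstacle} I anticipate is making rigorous the identification of the two sets of moduli up to a common scalar — in particular, justifying that modular equivalence of $r$ with $\tilde r$ transfers the support data $\{G_j\}$ and the proportionality of the weights cleanly, so that the maxima $\max_j(m'_j/m_j)$ computed from either description genuinely coincide. This requires checking that the common labeling of ergodic components by the index $j$ is respected under the change of basepoint, i.e. that the projective class of $\mathcal E_r$ pins down not merely the numbers $m_j$ up to scale but their pairing with the individual $G_j$; the projective linear independence of the functionals $\mu\mapsto i(G_j,\mu)^2$ is what makes this work, and verifying it carefully is the one genuinely substantive point in an argument that is otherwise a bookkeeping reduction to the results of Walsh.
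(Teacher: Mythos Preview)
Your proposal is correct and follows essentially the same route as the paper: invoke Theorem \ref{Walsh3} to produce rays $s,s'$ from the basepoint $b$ converging to $\xi,\xi'$, use Theorem \ref{Walsh4} to deduce that $r,s$ and $r',s'$ are modularly equivalent (so the moduli differ only by overall scalars $\lambda,\lambda'$), and then apply Theorem \ref{Walsh5} and cancel the scalars. The ``obstacle'' you flag is absorbed by the definition of modular equivalence itself, which already presupposes a common decomposition into the $G_j$ with coefficients related by a single $\lambda$, so no extra linear-independence argument is needed.
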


\begin{proof}
By Theorem \ref{Walsh3}, there exist two Teichm\"{u}ller geodesic rays $s$, $s'$ starting at $b=[X,id]$ and having the unit norm holomorphic quadratic differentials $\varphi $, $\varphi '$ on $X$ and converging to Busemann points $\xi $, $\xi '$ respectively.
By Theorem \ref{Walsh4}, the two pairs $r$, $s$ and $r'$, $s'$ are modularly equivalent respectively.
If the measured foliations $f^{-1}_*(H(q))$, $f'^{-1}_*(H(q'))$ are absolutely continuous, then the measured foliations $H(\varphi )$, $H(\varphi ')\in \mathcal {MF}(X)$ are also absolutely continuous, and can be written as $H(\varphi )=\sum _{j=1}^kc_jG_j$, $H(\varphi ')=\sum _{j=1}^kc'_jG_j$ where $c_j$, $c'_j>0$ for any $j=1,\cdots ,k$.
Let $n_j=\frac{c_j}{i(G_j,V(\varphi ))}$, $n'_j=\frac{c'_j}{i(G_j,V(\varphi '))}$ for any $j=1,\cdots ,k$, then there are $\lambda ,\lambda '>0$ such that $n_j=\lambda m_j$, $n'_j=\lambda 'm'_j$ respectively.
Therefore, by Theorem \ref{Walsh5},
	\begin{eqnarray}
	\delta (\xi, \xi ')\nonumber&=&\frac{1}{2}\log \max _{j=1,\cdots ,k}\frac{n'_j}{n_j}+\frac{1}{2}\log \max _{j=1,\cdots ,k}\frac{n_j}{n'_j}\\ \nonumber
	&=&\frac{1}{2}\log \max _{j=1,\cdots ,k}\frac{m'_j}{m_j}+\frac{1}{2}\log \max _{j=1,\cdots ,k}\frac{m_j}{m'_j}. \nonumber
	\end{eqnarray}
If the measured foliations $f^{-1}_*(H(q))$, $f'^{-1}_*(H(q'))$ are not absolutely continuous, then $H(\varphi )$, $H(\varphi ')$ are not also absolutely continuous, and we conclude that $\delta (\xi, \xi ')=+\infty $.
\end{proof}

We suppose that two rays $r$, $r'$ satisfy that $f^{-1}_*(H(q))$, $f'^{-1}_*(H(q'))$ are absolutely continuous.
Let $\xi $, $\xi '$ be Busemann points corresponding to the end points of $r$, $r'$ respectively, then
	\begin{eqnarray}
	\liminf _{t\rightarrow \infty }d_{T(X)}(r(t),r'(t)) \label{liminf2}
	&\geq &\frac{1}{2}\log \max _{j=1,\cdots ,k}\left\{ \frac{m'_j}{m_j},\frac{m_j}{m'_j}\right\}\\ \nonumber
	&\geq &\frac{1}{2}\log \left(\max _{j=1,\cdots ,k}\sqrt{\mathstrut \frac{m'_j}{m_j}}\cdot \max _{j=1,\cdots ,k}\sqrt{\frac{\mathstrut m_j}{m'_j}}\right)\\ \nonumber
	&=&\frac{1}{2}\left(\frac{1}{2}\log \max _{j=1,\cdots ,k}\frac{m'_j}{m_j}+\frac{1}{2}\log \max _{j=1,\cdots ,k}\frac{m_j}{m'_j}\right)\\ \nonumber
	&=&\frac{1}{2}\delta (\xi ,\xi '). \nonumber
	\end{eqnarray}
Now, we consider the minimum value of the equation of Theorem \ref{main} when we shift the initial points of the given rays $r$, $r'$.

\begin{prop} \label{prop}
Under the assumption of Theorem \ref{main}, the minimum of the limit value of the distance between the given rays $r(t)$, $r'(t)$ when we shift the initial points $r(0)$, $r'(0)$ is given by $\frac{1}{2}\delta (\xi ,\xi ')$ where $\xi $, $\xi '$ are the end points of the rays $r$, $r'$ on the Gardiner-Masur boundary of $T(X)$ respectively.
\end{prop}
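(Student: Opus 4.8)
The plan is to apply Theorem \ref{main} to the reparametrized rays and then carry out an elementary one-variable minimization.

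First I would make precise what it means to shift the initial points. Replacing the initial point $r(0)$ by $r(\alpha )$ amounts to passing to the ray $\hat r(t)=r(t+\alpha )$; by the Remark in \S\ref{end} the modulus of $A_j(\alpha )$ equals $e^{2\alpha }m_j$, so in the new parametrization the moduli at the initial point are $e^{2\alpha }m_1,\dots ,e^{2\alpha }m_k$. Likewise shifting $r'$ by $\alpha '$ replaces $m'_j$ by $e^{2\alpha '}m'_j$. The reparametrization changes neither the end point on $\hat T(X)$ (so $\hat r(\infty )=r(\infty )=r'(\infty )=\hat r'(\infty )$ still holds) nor the topological equivalence of the horizontal foliations, so the hypotheses of Theorem \ref{main} are preserved. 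Moreover a common shift of both rays leaves $\lim _{t\rightarrow \infty }d_{T(X)}(\hat r(t),\hat r'(t))$ unchanged, and since either ray may be shifted arbitrarily far forward, the limit value depends only on the difference $s=\alpha '-\alpha $, which ranges over all of $\mathbb R$.

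Next I would write, by Theorem \ref{main},
	\begin{center}
	$\displaystyle \Phi (s):=\lim _{t\rightarrow \infty }d_{T(X)}(\hat r(t),\hat r'(t))=\frac{1}{2}\log \max _{j=1,\dots ,k}\left\{ e^{2s}\frac{m'_j}{m_j},\ e^{-2s}\frac{m_j}{m'_j}\right\} .$
	\end{center}
Setting $A=\max _j\frac{m'_j}{m_j}$ and $B=\max _j\frac{m_j}{m'_j}$, the maximum over $j$ separates, since the two families are scaled by $e^{2s}$ and $e^{-2s}$ independently of $j$, giving $\Phi (s)=\frac{1}{2}\log \max \{ e^{2s}A,\ e^{-2s}B\} $.

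Finally I would minimize $s\mapsto \max \{ e^{2s}A,e^{-2s}B\} $ over $s\in \mathbb R$. The first term is increasing and the second decreasing in $s$, so the minimum is attained where they coincide, namely at $e^{4s}=B/A$, and there both equal $\sqrt{AB}$. Hence
	\begin{center}
	$\displaystyle \min _{s\in \mathbb R}\Phi (s)=\frac{1}{2}\log \sqrt{AB}=\frac{1}{4}\left( \log \max _j\frac{m'_j}{m_j}+\log \max _j\frac{m_j}{m'_j}\right) =\frac{1}{2}\delta (\xi ,\xi '),$
	\end{center}
the last equality being the formula for $\delta (\xi ,\xi ')$ established in the Proposition immediately preceding (which extends Theorem \ref{Walsh5} to arbitrary initial points). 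I expect the only real point requiring care to be the bookkeeping of the first paragraph: checking that shifting the two initial points independently contributes only through $s=\alpha '-\alpha $, that $s$ indeed exhausts $\mathbb R$, and that all hypotheses of Theorem \ref{main} survive the reparametrization so that the theorem may legitimately be invoked with the scaled moduli. The minimization itself is a routine balancing of two exponentials.
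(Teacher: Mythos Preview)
Your proof is correct and follows essentially the same route as the paper: apply Theorem \ref{main} to the shifted rays and minimize the resulting expression in the shift parameter. The paper phrases the argument as a lower bound via inequality (\ref{liminf2}) together with an explicit optimal shift $\beta =\tfrac{1}{4}\log \bigl(\max _j\tfrac{m_j}{m'_j}\big/\max _j\tfrac{m'_j}{m_j}\bigr)$, which is exactly your balancing point $e^{4s}=B/A$; your direct one-variable minimization is a slightly more streamlined packaging of the same computation.
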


\begin{proof}
By Theorem \ref{main} and the inequality (\ref{liminf2}), we see that
	\begin{eqnarray}
	\lim _{t\rightarrow \infty }d_{T(X)}(r(t),r'(t)) \nonumber
	&=&\frac{1}{2}\log \max _{j=1,\cdots ,k}\left\{ \frac{m'_j}{m_j},\frac{m_j}{m'_j}\right\} \\ \nonumber
	&\geq &\frac{1}{2}\delta (\xi ,\xi '). \nonumber
	\end{eqnarray}
We notice that the detour metric is invariant when we shift the initial points of the rays $r$, $r'$.
The equality holds if we consider
	\begin{center}
	$\displaystyle \beta =\frac{1}{4}\log \frac{\displaystyle \max _{j=1,\cdots ,k}\frac{m_j}{m'_j}}{\displaystyle \max _{j=1,\cdots ,k}\frac{m'_j}{m_j}}$
	\end{center}
and the rays $r(t)$, $r'(t+\beta )$.
In this situation, we compute that
	\begin{center}
	$\displaystyle \max _{j=1,\cdots ,k}\frac{e^{2\beta }m'_j}{m_j}=\max _{j=1,\cdots ,k}\left\{ \frac{\displaystyle \sqrt{\max _{j=1,\cdots ,k}\frac{\mathstrut m_j}{m'_j}}\cdot m'_j}{\displaystyle \sqrt{\max _{j=1,\cdots ,k}\frac{m'_j}{m_j}}\cdot m_j}\right\}=\sqrt{\max _{j=1,\cdots ,k}\frac{m'_j}{m_j}}\cdot \sqrt{\max _{j=1,\cdots ,k}\frac{\mathstrut m_j}{m'_j}}$
	\end{center}
and similarly,
	\begin{center}
	$\displaystyle \max _{j=1,\cdots ,k}\frac{m_j}{e^{2\beta }m'_j}=\sqrt{\max _{j=1,\cdots ,k}\frac{m'_j}{m_j}}\cdot \sqrt{\max _{j=1,\cdots ,k}\frac{\mathstrut m_j}{m'_j}}$.
	\end{center}
Therefore, we conclude that
	\begin{eqnarray}
	\lim _{t\rightarrow \infty }d_{T(X)}(r(t),r'(t+\beta )) \nonumber
	&=&\frac{1}{2}\log \max _{j=1,\cdots ,k}\left\{ \frac{e^{2\beta }m'_j}{m_j},\frac{m_j}{e^{2\beta }m'_j}\right\} \\ \nonumber
	&=&\frac{1}{2}\left(\frac{1}{2}\log \max _{j=1,\cdots ,k}\frac{m'_j}{m_j}+\frac{1}{2}\log \max _{j=1,\cdots ,k}\frac{m_j}{m'_j}\right)\\ \nonumber
	&=&\frac{1}{2}\delta (\xi ,\xi '). \nonumber
	\end{eqnarray}
\end{proof}

We can also obtain the following.

\begin{prop} \label{asy}
If two rays $r$, $r'$ are asymptotic, then the end points satisfy that $\xi =\xi '$ and the rays are modularly equivalent.
\end{prop}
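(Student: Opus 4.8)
The plan is to identify the two end points directly through their horofunction representations and then appeal to Theorem \ref{Walsh4}. Since that theorem asserts that two Teichm\"{u}ller rays converging to Busemann points are modularly equivalent \emph{precisely when} those Busemann points coincide, it suffices to prove that asymptotic rays have the same end point $\xi=\xi'$; modular equivalence then follows immediately. So the whole proof reduces to showing $\xi=\xi'$.

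First I would recall the horofunction machinery already assembled in this section. Each Teichm\"{u}ller geodesic ray is a geodesic, hence an almost geodesic, so it converges to a Busemann point; under the identification of the horofunction compactification with $\overline{T(X)}^{GM}$ (and the fact that $\psi$ is a homeomorphism onto its image), the convergence $r(t)\to\xi$ is equivalent to $\psi_{r(t)}\to\psi_{\xi}$ uniformly on compact sets. Using the remark that $\psi_{r(t)}(x)=d_{T(X)}(x,r(t))-d_{T(X)}(b,r(t))$ for $r(t)\in T(X)$, this means
\[
\psi_{\xi}(x)=\lim_{t\to\infty}\bigl(d_{T(X)}(x,r(t))-d_{T(X)}(b,r(t))\bigr),
\]
and likewise for $\xi'$ with $r'$ in place of $r$.

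Next I would feed in the hypothesis. By the definition of asymptotic there is $\alpha\in\mathbb{R}$ with $d_{T(X)}(r(t),r'(t+\alpha))\to 0$. The triangle inequality gives, for each fixed $x$,
\[
\bigl|\,d_{T(X)}(x,r(t))-d_{T(X)}(x,r'(t+\alpha))\,\bigr|\leq d_{T(X)}(r(t),r'(t+\alpha))\to 0,
\]
and the same estimate with $b$ in place of $x$. Subtracting, the quantity $d_{T(X)}(x,r(t))-d_{T(X)}(b,r(t))$ differs from $d_{T(X)}(x,r'(t+\alpha))-d_{T(X)}(b,r'(t+\alpha))$ by a term tending to $0$. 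Letting $t\to\infty$, and noting that $r'(t+\alpha)\to\xi'$ as well (shifting the parameter of a geodesic ray does not change its end point, so $\psi_{r'(t+\alpha)}\to\psi_{\xi'}$), I obtain $\psi_{\xi}(x)=\psi_{\xi'}(x)$ for every $x\in T(X)$. Since the horofunction embedding $\psi$ is injective, this forces $\xi=\xi'$, and then Theorem \ref{Walsh4} yields that $r$ and $r'$ are modularly equivalent.

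I expect the only point needing care to be the bookkeeping around the parameter shift $\alpha$: one must justify $\lim_{t\to\infty}\psi_{r'(t+\alpha)}=\psi_{\xi'}$, which holds because $r'(s)\to\xi'$ as $s\to\infty$ and $s=t+\alpha$ runs to infinity with $t$. Everything else is a direct application of the triangle inequality together with the already-established identification of the Gardiner--Masur and horofunction compactifications, so no further estimates are required.
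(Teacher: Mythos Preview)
Your argument is correct: the triangle inequality immediately shows that two sequences at vanishing distance have the same limit in the horofunction compactification, so $\psi_\xi=\psi_{\xi'}$, hence $\xi=\xi'$ by injectivity of $\psi$, and Theorem~\ref{Walsh4} then gives modular equivalence. This is a clean, self-contained proof that works in any proper geodesic metric space.

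The paper proceeds differently. It first invokes the remark preceding Corollary~\ref{cor} (non--absolutely continuous foliations force $\liminf d_{T(X)}(r(t),r'(t))=+\infty$) to conclude that $f^{-1}_*(H(q))$ and $f'^{-1}_*(H(q'))$ are absolutely continuous. It then appeals to the chain of inequalities~(\ref{liminf2}),
\[
\liminf_{t\to\infty} d_{T(X)}(r(t),r'(t))\;\geq\;\tfrac{1}{2}\log\max_j\{m'_j/m_j,\,m_j/m'_j\}\;\geq\;\tfrac{1}{2}\,\delta(\xi,\xi'),
\]
which, after the parameter shift implicit in the definition of ``asymptotic,'' forces $\delta(\xi,\xi')=0$ and hence $\xi=\xi'$; Theorem~\ref{Walsh4} finishes as in your proof. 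Thus the paper's route is Teichm\"uller--specific, passing through the moduli lower bound and the detour metric, whereas your route is a general metric-space argument requiring only the identification of $\partial_{GM}T(X)$ with the horofunction boundary. Your approach is shorter and avoids the absolute-continuity step and inequality~(\ref{liminf2}) entirely; the paper's approach has the virtue of tying the conclusion back to the quantitative estimates developed earlier in the section.
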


\begin{proof}
From the remark where is at the above of Corollary \ref{cor}, the measured foliations $f^{-1}_*(H(q))$, $f'^{-1}_*(H(q'))$ are absolutely continuous.
Then, this is immediately by the inequality (\ref{liminf2}) and Theorem \ref{Walsh4}.
\end{proof}

\section{The tables of the classification about the behavior of two Teichm\"{u}ller rays}
In this section, we give the tables of the classification of the conditions under which given two Teichm\"{u}ller geodesic rays are bounded, diverge, or asymptotic.

Let $r$, $r'$ be two Teichm\"{u}ller geodesic rays on $T(X)$ starting at $[Y,f]$, $[Y',f']$ and having unit norm holomorphic quadratic differentials $q$, $q'$ on $Y$, $Y'$, and $H(q)$, $H(q')$ be measured foliations corresponding to $q$, $q'$ respectively.
We set $H=f^{-1}_*(H(q))$, $H'=f'^{-1}_*(H(q'))$.
In the following tables, the notation ``top.equi.'' means topologically equivalent, ``abs.conti.'' means absolutely continuous, ``J-S.'' means Jenkins-Strebel, ``u.e.'' means uniquely ergodic, and ``mod.equi.'' means modularly equivalent.
\begin{eqnarray}
H,H':\left\{
\begin{array}{l}
{\rm top.equi.\ and\ }

\left\{
\begin{array}{l}
$abs.conti. $\Rightarrow $ bounded \cite{Ivanov01}$\\
\\
$not abs.conti. $\Rightarrow $ diverge \cite{LenMas10}$
\end{array}
\right.
\\
\\

$not top.equi. and $i(H,H')

\left\{
\begin{array}{l}
\not =0\Rightarrow $ diverge \cite{Ivanov01}$\\
\\
=0\Rightarrow $ diverge \cite{LenMas10}$\\
\end{array}
\right.

\end{array} 
\right.
\nonumber
\end{eqnarray}
By the above table,
\begin{center}
$r$, $r'$: bounded $\iff$ $H$, $H'$: abs.conti.
\end{center}
If $H$, $H'$ are absolutely continuous and Jenkins-Strebel, we denote by $r(\infty )$, $r'(\infty )$ the end points of the given rays $r$, $r'$ on the augmented Teichm\"{u}ller space $\hat T(X)$ respectively.
\begin{eqnarray}
H,H':{\rm abs.conti.\ and}\left\{
\begin{array}{l}
$J-S. and $

\left\{
\begin{array}{l}
r,r':$ mod.equi. and $r(\infty )=r'(\infty )\\
\Rightarrow $ asymptotic (Cor.\ref{cor})$\\
\\
$otherwise $\\
\Rightarrow $ bounded but not asymptotic (Cor.\ref{cor})$
\end{array}
\right.
\\
\\

$not J-S. and $

\left\{
\begin{array}{l}
$u.e., and $\Gamma _q$ and $\Gamma _{q'}$ do not contain$\\
$closed loops $\Rightarrow $ asymptotic \cite{Masur80}$\\
\\
$otherwise $\Rightarrow $ unknown$\\
\end{array}
\right.

\end{array} 
\right.
\nonumber
\end{eqnarray}
Let $\xi $, $\xi '$ be the end points of the rays $r$, $r'$ on the Gardiner-Masur boundary respectively.
\begin{eqnarray}
\begin{array}{ccccc}
r,r':$ asymptotic$&\Longrightarrow &\xi =\xi '&\iff &r,r':$ mod.equi.$\\ \nonumber
&$ (Prop.\ref{asy})$&&$\cite{Walsh12}$& \nonumber
\end{array}
\end{eqnarray}

\section*{Acknowledgements}
I am deeply grateful to Professor Hiroshige Shiga and Professor Hideki Miyachi.
They give much insightful comments and warm encouragements.
I also would like to thank Professor Hiroki Sato and Professor Yoshihide Okumura.
They led me to the field of complex analysis.

\bibliographystyle{alpha}
\bibliography{math}

\begin{thebibliography}{FLP79}

\bibitem[Abi77]{Abikoff77}
William Abikoff.
\newblock Degenerating families of {R}iemann surfaces.
\newblock {\em Ann. of Math. (2)}, 105(1):29--44, 1977.

\bibitem[FLP79]{FatLauPoe79}
Albert Fathi, Fran\c{c}ois Laudenbach, and Valentin Po\'{e}naru.
\newblock {\em Travaux de {T}hurston sur les surfaces}, volume~66 of {\em
  Ast\'erisque}.
\newblock Soci\'et\'e Math\'ematique de France, Paris, 1979.
\newblock S{\'e}minaire Orsay, With an English summary.

\bibitem[FM10]{FarMas10}
Benson Farb and Howard Masur.
\newblock Teichm\"uller geometry of moduli space, {I}: distance minimizing rays
  and the {D}eligne-{M}umford compactification.
\newblock {\em J. Differential Geom.}, 85(2):187--227, 2010.

\bibitem[GM91]{GarMas91}
Frederick~P. Gardiner and Howard Masur.
\newblock Extremal length geometry of {T}eichm\"uller space.
\newblock {\em Complex Variables Theory Appl.}, 16(2-3):209--237, 1991.

\bibitem[Gro81]{Gromov81}
M.~Gromov.
\newblock Hyperbolic manifolds, groups and actions.
\newblock In {\em Riemann surfaces and related topics: {P}roceedings of the
  1978 {S}tony {B}rook {C}onference ({S}tate {U}niv. {N}ew {Y}ork, {S}tony
  {B}rook, {N}.{Y}., 1978)}, volume~97 of {\em Ann. of Math. Stud.}, pages
  183--213. Princeton Univ. Press, Princeton, N.J., 1981.

\bibitem[Gup11]{Gupta11}
Subhojoy Gupta.
\newblock Asymptoticity of grafting and {T}eichm\"{u}ller rays {I}.
\newblock {\em arXiv:1109.5365v1}, 2011.

\bibitem[HM79]{HubMas79}
John Hubbard and Howard Masur.
\newblock Quadratic differentials and foliations.
\newblock {\em Acta Math.}, 142(3-4):221--274, 1979.

\bibitem[HS07]{HerSch07}
Frank Herrlich and Gabriela Schmith{\"u}sen.
\newblock On the boundary of {T}eichm\"uller disks in {T}eichm\"uller and in
  {S}chottky space.
\newblock In {\em Handbook of {T}eichm\"uller theory. {V}ol. {I}}, volume~11 of
  {\em IRMA Lect. Math. Theor. Phys.}, pages 293--349. Eur. Math. Soc.,
  Z\"urich, 2007.

\bibitem[IT92]{ImaTan92}
Yoichi Imayoshi and Masahiko Taniguchi.
\newblock {\em An introduction to {T}eichm\"uller spaces}.
\newblock Springer-Verlag, Tokyo, 1992.
\newblock Translated and revised from the Japanese by the authors.

\bibitem[Iva92]{Ivanov92}
Nikolai~V. Ivanov.
\newblock {\em Subgroups of {T}eichm\"uller modular groups}, volume 115 of {\em
  Translations of Mathematical Monographs}.
\newblock American Mathematical Society, Providence, RI, 1992.
\newblock Translated from the Russian by E. J. F. Primrose and revised by the
  author.

\bibitem[Iva01]{Ivanov01}
Nikolai~V. Ivanov.
\newblock Isometries of {T}eichm\"uller spaces from the point of view of
  {M}ostow rigidity.
\newblock In {\em Topology, ergodic theory, real algebraic geometry}, volume
  202 of {\em Amer. Math. Soc. Transl. Ser. 2}, pages 131--149. Amer. Math.
  Soc., Providence, RI, 2001.

\bibitem[Ker80]{Kerckhoff80}
Steven~P. Kerckhoff.
\newblock The asymptotic geometry of {T}eichm\"uller space.
\newblock {\em Topology}, 19(1):23--41, 1980.

\bibitem[KH95]{KatHas95}
Anatole Katok and Boris Hasselblatt.
\newblock {\em Introduction to the modern theory of dynamical systems},
  volume~54 of {\em Encyclopedia of Mathematics and its Applications}.
\newblock Cambridge University Press, Cambridge, 1995.
\newblock With a supplementary chapter by Katok and Leonardo Mendoza.

\bibitem[LM10]{LenMas10}
Anna Lenzhen and Howard Masur.
\newblock Criteria for the divergence of pairs of {T}eichm\"uller geodesics.
\newblock {\em Geom. Dedicata}, 144:191--210, 2010.

\bibitem[LS12]{LiuSu12}
Lixin Liu and Weixu Su.
\newblock The horofunction compactification of {T}eichm\"{u}ller metric.
\newblock {\em arXiv:1012.0409v4}, 2012.

\bibitem[Mas75]{Masur75}
Howard Masur.
\newblock On a class of geodesics in {T}eichm\"uller space.
\newblock {\em Ann. of Math. (2)}, 102(2):205--221, 1975.

\bibitem[Mas80]{Masur80}
Howard Masur.
\newblock Uniquely ergodic quadratic differentials.
\newblock {\em Comment. Math. Helv.}, 55(2):255--266, 1980.

\bibitem[Miy08]{Miyachi08}
Hideki Miyachi.
\newblock Teichm\"uller rays and the {G}ardiner-{M}asur boundary of
  {T}eichm\"uller space.
\newblock {\em Geom. Dedicata}, 137:113--141, 2008.

\bibitem[Miy11]{Miyachi11}
Hideki Miyachi.
\newblock Teichm\"uller space has non-{B}usemann points.
\newblock {\em arXiv:1105.3070v1}, 2011.

\bibitem[Ree81]{Rees81}
Mary Rees.
\newblock An alternative approach to the ergodic theory of measured foliations
  on surfaces.
\newblock {\em Ergodic Theory Dynamical Systems}, 1(4):461--488 (1982), 1981.

\bibitem[Rie02]{Rieffel02}
Marc~A. Rieffel.
\newblock Group {$C^*$}-algebras as compact quantum metric spaces.
\newblock {\em Doc. Math.}, 7:605--651 (electronic), 2002.

\bibitem[Str84]{Strebel84}
Kurt Strebel.
\newblock {\em Quadratic differentials}, volume~5 of {\em Ergebnisse der
  Mathematik und ihrer Grenzgebiete (3) [Results in Mathematics and Related
  Areas (3)]}.
\newblock Springer-Verlag, Berlin, 1984.

\bibitem[Wal11]{Walsh11}
Cormac Walsh.
\newblock The horoboundary and isometry group of {T}hurston's {L}ipschitz
  metric.
\newblock {\em arXiv:1006.2158v3}, 2011.

\bibitem[Wal12]{Walsh12}
Cormac Walsh.
\newblock The asymptotic geometry of the {T}eichm\"uller metric.
\newblock {\em arXiv:1210.5565v1}, 2012.

\end{thebibliography}

\end{document}